\documentclass[preprint]{elsarticle}

% $Id: cyclemax.tex 3439 2014-10-29 09:01:14Z mskala $

\usepackage{algorithm}
\usepackage{algorithmic}
\usepackage{amsmath}
\usepackage{amssymb}
\usepackage{amsthm}
\usepackage{hyperref}
\usepackage{rotating}
\usepackage{tikz}
\usepackage{url}

\usetikzlibrary{arrows,calc,decorations.markings}

\tikzset{bigah/.style={decoration={markings,
  mark=at position 1 with {\arrow[ultra thick]{latex'}}},
  postaction={decorate},shorten >=1.4pt}}

\newtheorem{theorem}{Theorem}[section]
\newtheorem{lemma}[theorem]{Lemma}
\newtheorem{corollary}[theorem]{Corollary}
\theoremstyle{definition}
\newtheorem{conjecture}[theorem]{Conjecture}
\newtheorem{definition}[theorem]{Definition}

\DeclareMathOperator{\perm}{perm}

% Macro from ECLiPSe's own manual
\newcommand*{\eclipse}{ECL$^i$PS$^e$}

\journal{Discrete Mathematics}

%%%%%%%%%%%%%%%%%%%%%%%%%%%%%%%%%%%%%%%%%%%%%%%%%%%%%%%%%%%%%%%%%%%%%%%%

\begin{document}
\begin{frontmatter}

%% use the fnref command within \author or \address for footnotes;
%% use the fntext command for the associated footnote;
%% use the corref command within \author for corresponding author footnotes;
%% use the cortext command for the associated footnote;
%% use the ead command for the email address,
%% and the form \ead[url] for the home page:
%%
%% \title{Title\tnoteref{label1}}
%% \tnotetext[label1]{}
%% \author{Name\corref{cor1}\fnref{label2}}
%% \ead{email address}
%% \ead[url]{home page}
%% \fntext[label2]{}
%% \cortext[cor1]{}
%% \address{Address\fnref{label3}}
%% \fntext[label3]{}

\title{Cycle-maximal triangle-free graphs\tnoteref{tn1}}
\tnotetext[tn1]{NOTICE: this is the authors' version of a work that was
accepted for publication in \emph{Discrete Mathematics}.  Changes resulting
from the publishing process, such as peer review, editing, corrections,
structural formatting, and other quality control mechanisms may not be
reflected in this document.  Changes may have been made to this work since
it was submitted for publication.  A definitive version was subsequently
published in \emph{Discrete Mathematics}, Volume 338, Issue 2 (6 February
2015), pages 274-290.  DOI:
\href{http://dx.doi.org/10.1016/j.disc.2014.10.002}{10.1016/j.disc.2014.10.002}}
%% use the tnoteref command within \title for footnotes;
%% use the tnotetext command for the associated footnote;

%% use optional labels to link authors explicitly to addresses:
%% \author[label1,label2]{<author name>}
%% \address[label1]{<address>}
%% \address[label2]{<address>}

\author[compsci,nserc]{Stephane Durocher}
\ead{durocher@cs.umanitoba.ca}

\author[math,nserc]{David S. Gunderson}
\ead{gunderso@cc.umanitoba.ca}

\author[compsci]{Pak Ching Li}
\ead{lipakc@cs.umanitoba.ca}

\author[compsci]{Matthew~Skala\corref{cor1}}
\ead{mskala@ansuz.sooke.bc.ca}
\cortext[cor1]{Principal corresponding author.}

\address[compsci]{Department of Computer Science,
E2--445 EITC, University of Manitoba, Winnipeg, Manitoba, Canada, R3T 2N2}

\address[math]{Department of Mathematics,
342 Machray Hall, 186 Dysart Road,
University of Manitoba, Winnipeg, Manitoba, Canada, R3T 2N2}

\fntext[nserc]{Work of these authors is supported in part by the Natural
Sciences and Engineering Research Council of Canada (NSERC).}

%% use the tnoteref command within \title for footnotes;
%% use the tnotetext command for the associated footnote;
%% use the fnref command within \author or \address for footnotes;
%% use the fntext command for the associated footnote;
%% use the corref command within \author for corresponding author footnotes;
%% use the cortext command for the associated footnote;
%% use the ead command for the email address,
%% and the form \ead[url] for the home page:
%%
%% \title{Title\tnoteref{label1}}
%% \tnotetext[label1]{}
%% \author{Name\corref{cor1}\fnref{label2}}
%% \ead{email address}
%% \ead[url]{home page}
%% \fntext[label2]{}
%% \cortext[cor1]{}
%% \address{Address\fnref{label3}}
%% \fntext[label3]{}

% $Id: abstract.tex 2548 2013-10-18 19:06:47Z mskala $

\begin{abstract}
We conjecture that the balanced complete bipartite graph $K_{\lfloor n/2
\rfloor,\lceil n/2 \rceil}$ contains more cycles than any other $n$-vertex
triangle-free graph, and we make some progress toward proving this.  We give
equivalent conditions for cycle-maximal triangle-free graphs; show bounds on
the numbers of cycles in graphs depending on numbers of vertices and edges,
girth, and homomorphisms to small fixed graphs; and use the bounds to show
that among regular graphs, the conjecture holds.  We also
consider graphs that are close to being regular, with the minimum and
maximum degrees differing by at most a positive integer $k$.  For $k=1$, we
show that any such counterexamples have $n\le 91$ and are not homomorphic
to $C_5$; and for any fixed $k$ there exists a finite upper bound on the
number of vertices in a counterexample.  Finally, we describe an algorithm
for efficiently computing the matrix permanent (a $\#P$-complete
problem in general) in a special case used by our bounds.
\end{abstract}

\begin{keyword}
extremal graph theory \sep cycle \sep triangle-free \sep
regular graph \sep matrix permanent \sep \#P-complete
\MSC[2010] 05C38 \sep 05C35
\end{keyword}

\end{frontmatter}

%%%%%%%%%%%%%%%%%%%%%%%%%%%%%%%%%%%%%%%%%%%%%%%%%%%%%%%%%%%%%%%%%%%%%%%%

% $Id: intro.tex 3431 2014-10-28 08:08:19Z mskala $

\section{Introduction}
\label{sec:intro}

%%%%%%%%%%%%%%%%%%%%%%%%%%%%%%%%%%%%%%%%%%%%%%%%%%%%%%%%%%%%%%%%%%%%%%%%

Many algorithmic problems that are computationally difficult on graphs can
be solved easily in polynomial time when the graph is acyclic.
Limiting input to trees (connected acyclic graphs) or forests (acyclic
graphs), however, is often too restrictive; many of these problems remain
efficiently solvable when the graph is ``nearly'' a tree
\cite{Arnborg:Easy,Bern:Linear,Bodlaender:Dynamic,Gurevich:Solving}. 
Various notions exist formalizing how close a given graph is to being a
tree, including bounded treewidth (partial $k$-trees), $k$-connectivity, and
number of cycles. 

The problem of evaluating $c(G)$ for a given graph is $\#P$-complete,
equivalent in difficulty to counting the certificates of an $NP$-complete
decision problem, even though the problem of testing for the existence of a
single cycle is trivially polynomial-time.
Existence of a cycle is a graph
property definable in monadic second-order logic.
By the result known as
Courcelle's Theorem~\cite{Courcelle:Monadic}, such properties can be decided
in linear time for graphs of bounded treewidth, and as described by Arnborg,
Lagergren, and Seese, the counting versions are also linear-time for fixed
treewidth~\cite{Arnborg:Easy}.
On the other hand, if we parameterize by
length of the cycles instead of structure of the graph, Flum and
Grohe~\cite{Flum:Parameterized} give evidence against fixed-parameter
tractability: they show that counting cycles of length $k$ is
$\#W[1]$-complete, with no $(f(k)\cdot n^c)$-time algorithm unless the
Exponential Time Hypothesis fails.

When no restrictions are imposed on the graph, the
number of cycles in an $n$-vertex graph is maximized by the complete graph
on $n$ vertices, $K_n$.  In this case the number of cycles is easily seen to
be
\begin{equation}
  \sum_{i=3}^n \left( \binom{n}{i} \frac{(i-1)!}{2}
    \right) = n!  \sum_{i=3}^n \frac{1}{2i(n-i)!} \, .  \label{eqn:kn}
\end{equation}

The bound \eqref{eqn:kn} can be refined by introducing additional
parameters.  Previous results include bounds on the number of cycles in
terms of $n$, $m$, $\delta$, and $\Delta$ (the number of vertices, number of
edges, minimum degree, and maximum degree of $G$, respectively)
\cite{Entringer:Maximum,Guichard:Maximum,Volkmann:Estimations}, as well as
bounds on the number of cycles for various classes of graphs, including
$k$-connected graphs \cite{Knor:Number}, Hamiltonian graphs
\cite{Rautenbach:Maximum,Shi:Number}, planar graphs
\cite{Aldred:Maximum,Alt:Number}, series-parallel graphs
\cite{DeMier:Maximum}, and random graphs \cite{Takacs:Limit}.

A graph's cycles can be classified by length.  For each value of $i$, the
summand in \eqref{eqn:kn} corresponds to the number of cycles of length $i$
in $K_n$.  If short cycles are disallowed, the number of long cycles
possible is also reduced.  Every graph $G$ of girth $g$ that contains two or
more cycles has $n \geq 3g/2-1$ vertices or, equivalently, if $g >
2(n+1)/3$, then $G$ has at most one cycle \cite{Bose:Bounding}.  The bound
on the number of cycles increases as $g$ decreases.  As mentioned earlier,
the case $g=3$ is maximized by $K_n$ for which the number of cycles is
exactly \eqref{eqn:kn}.  Can the maximum number of cycles be expressed
exactly or bounded tightly as a function of arbitrary values for $n$ and
$g$?  Even when $g=4$ the maximum number of possible cycles is
unknown.  Graphs of girth four or greater are exactly the triangle-free
graphs.  One goal of this research program is to show that the number of
cycles in a triangle-free $n$-vertex graph is maximized by the complete
bipartite graph $K_{\lfloor n/2\rfloor, \lceil n/2\rceil}$, and the results
in this paper represent significant progress toward that goal.

We first encountered the problem of bounding the number of cycles as a
function of $n$ and $g$ when examining path-finding algorithms on graphs. 
A tree traversal can be achieved by applying a right-hand rule (e.g.,
after reaching a vertex $v$ via its $i$th edge, depart along its $(i+1)$st
edge). Traversing a graph using only local information at each vertex
is significantly more difficult in graphs with cycles.
A successful traversal can be guaranteed, however, if the local
neighbourhood of every vertex $v$ is tree-like within some distance $k$
from $v$ (e.g., the graph has girth $g \geq 2k+1$) and that a fixed upper bound 
is known on the number of possible cycles along paths that join pairs of leaves
outside each such local tree (Bose, Carmi, and Durocher~\cite{Bose:Bounding}
give a more formal discussion).  Deriving a useful bound on this number of
cycles led to the work presented in this paper.

In any graph, every chordless cycle of length seven or greater can be bridged
by the addition of a chord without creating any triangles.
Similarly, in any graph of girth six or greater, any given 
cycle can be bridged without creating any triangles.
There exist graphs of girth four and five, 
however, that contain cycles of length 
six that cannot be bridged without creating a triangle.  The Petersen graph
minus one vertex, as shown in Figure~\ref{fig:pminus},
is such a graph of girth five;
replacing one of its vertices with two sharing the same
neighbourhood results in a graph of girth four with the same property.
To increase the number of cycles in a graph, 
large chordless cycles can be bridged greedily 
until the graph is triangle-free but the addition of any edge would create
a triangle.
This suggests that a cycle-maximal triangle-free graph 
should contain many cycles of length four or five.
Since bipartite graphs are triangle free,
complete bipartite graphs and, more specifically, balanced bipartite
graphs are natural candidates for maximizing
the number of cycles.
We verified the following conjecture to be true 
by exhaustive computer search for $n \le 13$:

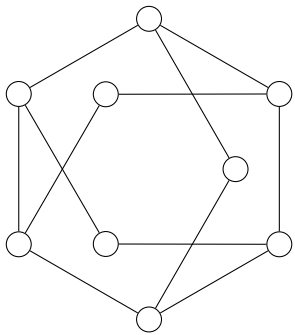
\begin{figure}
\centering\begin{tikzpicture}
  \node[draw,circle] (a) at (30:2cm) {~};
  \node[draw,circle] (b) at (90:2cm) {~} edge (a);
  \node[draw,circle] (c) at (150:2cm) {~} edge (b);
  \node[draw,circle] (d) at (210:2cm) {~} edge (c);
  \node[draw,circle] (e) at (270:2cm) {~} edge (d);
  \node[draw,circle] (f) at (330:2cm) {~} edge (e) edge (a);
  \node[draw,circle] (g) at (0:1.15cm) {~} edge (b) edge (e);
  \node[draw,circle] (h) at (120:1.15cm) {~} edge (a) edge (d);
  \node[draw,circle] (i) at (240:1.15cm) {~} edge (c) edge (f);
\end{tikzpicture}
\caption{The Petersen graph minus one vertex, which contains a $C_6$ that
  cannot be bridged without creating a triangle.}
\label{fig:pminus}
\end{figure}

\begin{conjecture}\label{con:main}
The cycle-maximal triangle-free graphs are exactly the bipartite Tur\'{a}n
graphs, $K_{\lfloor n/2 \rfloor, \lceil n/2 \rceil}$ for all $n$.
\end{conjecture}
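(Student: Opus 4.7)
The plan is a three-stage attack. The first stage reduces the problem to edge-maximal triangle-free graphs. If $G$ is triangle-free and some non-edge $uv$ can be added without creating a triangle, then every cycle of $G$ survives in $G + uv$ and, provided $G$ is connected (which a short preliminary argument shows any cycle-maximal $G$ must be, since otherwise one may join components by edges without creating triangles and then add further edges that do close cycles), at least one new cycle is formed using $uv$, so $c(G + uv) > c(G)$. Hence any cycle-maximal triangle-free $G$ is edge-maximal as a triangle-free graph, meaning every pair of non-adjacent vertices has a common neighbour. This structural constraint lets us deploy the ``equivalent conditions for cycle-maximal triangle-free graphs'' promised in the abstract, together with the girth and homomorphism bounds.

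The second stage is the regular case. For a $d$-regular triangle-free graph on $n$ vertices there is a Moore-type constraint $d \leq n/2$, with equality only at $K_{n/2,n/2}$. The number of closed walks of length $\ell$ is $\operatorname{tr}(A^{\ell})$, and triangle-freeness forces $\operatorname{tr}(A^3) = 0$, tightly constraining the spectrum of the adjacency matrix. I would bound $c(G)$ by converting closed walks into genuine cycles (subtracting repetitions and backtracking contributions) and show that the resulting function of the spectrum is maximised at the spectrum of $K_{n/2,n/2}$. The matrix-permanent algorithm mentioned in the abstract is the natural engine for the exact count of Hamiltonian cycles in $K_{n/2,n/2}$, which dominates the asymptotics of the sum.

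The third and most difficult stage is to extend from regular to general graphs. The natural approach is a local edge-swap argument: given a cycle-maximal $G$ with $\Delta(G) - \delta(G) \geq 2$, exhibit an admissible modification, say moving an edge incident to a highest-degree vertex onto a lowest-degree vertex, that preserves triangle-freeness and strictly increases $c(G)$. The main obstacle lies precisely here: a single swap can simultaneously create and destroy many cycles of widely different lengths, and triangle-freeness sharply restricts which swaps are admissible, so controlling the net change in $c$ is delicate. The abstract's incremental progress---a finite bound on $n$ for each fixed $\Delta - \delta \leq k$, and $n \leq 91$ for $k = 1$---strongly suggests that this step works only with slack growing finitely with $k$, and that a full proof needs a more global tool, for example a Schur-convexity-type property of the cycle count with respect to the degree sequence, or an invariant that groups cycles by homomorphism class and is controlled vertex-locally. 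Combined with exhaustive verification for small $n$ (the authors record $n \le 13$) and a separate treatment of the odd-$n$ case, in which $K_{\lfloor n/2\rfloor,\lceil n/2\rceil}$ is itself only near-regular and the comparison target shifts, this three-stage scheme would complete the proof once the swap step is made uniform; making it uniform is where I expect the real difficulty to concentrate.
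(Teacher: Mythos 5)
The statement you are asked to prove is labelled a \emph{conjecture} in the paper, and the paper does not prove it: the authors establish only that it holds for regular graphs (Theorem~\ref{thm:regular-triangle-free}) and for near-regular graphs on more than $91$ vertices (Theorem~\ref{thm:near-reg-triangle-free}), leaving the general case open. Your proposal is likewise not a proof but a plan, and by your own admission its decisive step is missing, so the honest verdict is that there is a genuine gap --- in fact two. Your Stage~1 is sound and coincides with the paper's Corollary~\ref{cor:definitely-biconn} and Lemma~\ref{lem:every-edge-cfour} (cycle-maximal triangle-free graphs are $2$-connected and maximal triangle-free). But Stage~2, the regular case, rests on converting $\operatorname{tr}(A^{\ell})$ into cycle counts by ``subtracting repetitions and backtracking contributions,'' and this step would fail: there is no tractable inversion from closed-walk counts to cycle counts for general $\ell$ (the correction terms involve counting all homomorphic images of $C_{\ell}$ in $G$, itself a hard enumeration), and the quantity $c(G)$ is dominated by near-Hamiltonian cycles, for which $\operatorname{tr}(A^{n})$ overcounts by factors so large that no spectral comparison with $K_{n/2,n/2}$ could be extracted. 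The paper's actual route for the regular case is entirely different: upper bounds on $c(G)$ via successor-counting ($\Pi(n,m)$ in Lemma~\ref{lem:edge-bound}), homomorphism bounds to the Andr\'asfai graphs $\Gamma_i$ (Lemmas~\ref{lem:hmorph-bound} and~\ref{lem:perm-bound}), the classification of regular maximal triangle-free graphs with $\delta > 10n/29$ as uniform blowups $\Gamma_j(t)$ (Lemma~\ref{lem:gamma-blowup}), and then an explicit finite check of the $428$ surviving $(n,\delta)$ pairs with $n\le 61$ by constraint programming --- not a uniform spectral argument.

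The second and larger gap is your Stage~3. You correctly identify that a single triangle-free-preserving edge swap can create and destroy cycles of many lengths simultaneously, and you do not supply the Schur-convexity or degree-sequence monotonicity tool that would make the swap argument work; the paper has no such tool either, which is precisely why the statement remains a conjecture there. One further caution: your claim that a $d$-regular triangle-free graph satisfies $d\le n/2$ with equality only at $K_{n/2,n/2}$ is true but far too weak to drive Stage~2, since the dangerous competitors (e.g.\ $C_5(t)$ with $d=2n/5$, or the blowups $\Gamma_i(t)$) all satisfy it comfortably; the paper's analysis shows that eliminating them requires quantitative cycle bounds, not degree bounds alone. In short: Stage~1 reproduces a lemma of the paper, Stage~2 proposes a method that does not work and is not the paper's method, and Stage~3 is openly unfinished, so the proposal does not prove the statement --- which, to be fair to you, nobody has.
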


%%%%%%%%%%%%%%%%%%%%%%%%%%%%%%%%%%%%%%%%%%%%%%%%%%%%%%%%%%%%%%%%%%%%%%%%

\subsection{Overview of results}

Our main results, Theorems~\ref{thm:regular-triangle-free}
and~\ref{thm:near-reg-triangle-free}, show that Conjecture~\ref{con:main}
holds for all regular cycle-maximal triangle-free graphs, and all
near-regular cycle-maximal triangle-free graphs with greater than $91$
vertices.  In Section~\ref{sec:props} we give some properties of
cycle-maximal graphs.  In Section~\ref{sec:bounds} we establish bounds on
the number of cycles in triangle-free graphs.  In Section~\ref{sec:regular}
we prove Theorem~\ref{thm:regular-triangle-free}, and in
Section~\ref{sec:near-regular} we prove
Theorem~\ref{thm:near-reg-triangle-free}.  Section~\ref{sec:algorithm}
describes an algorithm for computing the matrix permanent, which is used in
our bounds.

%%%%%%%%%%%%%%%%%%%%%%%%%%%%%%%%%%%%%%%%%%%%%%%%%%%%%%%%%%%%%%%%%%%%%%%%

\subsection{Definitions and notation}\label{sub:definitions}

Graphs are simple and undirected unless otherwise specified.
A \emph{block} in a graph $G$ is a maximal 2-connected subgraph of $G$.
Given a graph
$G$, let $V(G)$, $E(G)$, $\delta(G)$, and $\Delta(G)$ denote,
respectively, the vertex set of $G$, edge set of $G$, minimum degree of any
vertex in $G$, and maximum degree of any vertex in $G$.  Given a vertex $v
\in V(G)$, let $N(v)$ denote the neighbourhood of $v$; that is, the set of all
vertices adjacent to $v$ in $G$.  Given positive integers $s$ and $t$, let
$K_s$ denote the complete graph on $s$ vertices, $K_{s,t}$ denote the complete
bipartite graph with part sizes $s$ and $t$, $C_s$ denote the cycle of $s$
vertices, and $P_s$ denote the path of $s$ vertices.
Given a positive integer $n$, let $T(n,2)$ represent the
bipartite Tur\'{a}n graph on $n$ vertices, that is, $K_{\lfloor n/2 \rfloor,
\lceil n/2 \rceil}$.

A graph is \emph{triangle-free} if it does not contain $C_3$ (a
\emph{triangle}) as a subgraph.  The \emph{girth} of a graph is the size of
the smallest cycle, by convention $\infty$ if there are no cycles.
Triangle-free is equivalent to having girth at least 4.  A graph $G$ is
\emph{maximal triangle-free} if it is triangle-free, but adding any edge
would create a triangle.  Let $c(G)$ denote the number of labelled cycles in
$G$.  That is the number of distinct subsets of $E(G)$ that are cycles; note
that we are not only counting distinct cycle \emph{lengths}, which may also be
interesting but is a completely different problem.  Then $G$ is
\emph{cycle-maximal} for some class of graphs and number of vertices $n$ if
$G$ maximizes $c(G)$ among $n$-vertex graphs in the class.  Most often we
are interested in cycle-maximal graphs for fixed minimum girth $g$,
especially the case $g\ge 4$, \emph{cycle-maximal triangle-free} graphs.  It
is easy to prove (see Lemma~\ref{lem:every-edge-cfour}) that a cycle-maximal
triangle-free graph, if large enough to have any cycles at all, is also
maximal triangle-free.

A graph $G$ is \emph{homomorphic} to a graph $H$ when there exists a
function $f:V(G) \rightarrow V(H)$, called a \emph{homomorphism}, such that
if $(u,v) \in E(G)$ then $(f(u),f(v)) \in E(H)$.  A graph is $s$-colourable
if and only if it is homomorphic to $K_s$.  Given a positive integer $t$ and
a graph $H$, let $H(t)$ represent the uniform \emph{blowup} of $H$: that is
the graph homomorphic to $H$ formed by replacing the vertices in $H$ with
independent sets, each of size $t$, and adding edges between all vertices in
two independent sets if the sets correspond to adjacent vertices in $H$.  If
$H$ has $p$ vertices, then $H(t)$ has $pt$ vertices.  When $H$ is a labelled
graph with $p$ vertices $v_1,v_2,\ldots,v_p$, let $H(n_1,n_2,\ldots,n_p)$
represent the not necessarily uniform blowup of $H$ in which $v_1$ is
replaced by an independent set of size $n_1$, $v_2$ by an independent set of
size $n_2$, and so on, with all edges added that are allowed by the
homomorphism.

We define the family of \emph{gamma graphs} as follows.  For any positive
integer $i$, $\Gamma_i$ is a graph with $n=3i-1$ vertices
$v_1,v_2,\ldots,v_n$.  Each vertex $v_j$ is adjacent to the $i$ vertices
$v_{j+i},v_{j+i+1},\ldots,v_{j+2i-1}$, taking the indices modulo $n$.  For
$i\ge2$, this is the complement of the $(i-1)$st power of the cycle graph
$C_{3i-1}$.  Then $\Gamma_1$ is $K_2$, $\Gamma_2$ is $C_5$, and $\Gamma_3$
is the eight-vertex M\"{o}bius ladder, or twisted cube, shown in
Figure~\ref{fig:twcube}.

\begin{figure}
\centering\begin{tikzpicture}
  \node[draw,circle] (a) at (0,2) {~};
  \node[draw,circle] (b) at (1.414,1.414) {~} edge (a);
  \node[draw,circle] (c) at (2,0) {~} edge (b);
  \node[draw,circle] (d) at (1.414,-1.414) {~} edge (c);
  \node[draw,circle] (e) at (0,-2) {~} edge (a) edge (d);
  \node[draw,circle] (f) at (-1.414,-1.414) {~} edge (b) edge (e);
  \node[draw,circle] (g) at (-2,0) {~} edge (c) edge (f);
  \node[draw,circle] (h) at (-1.414,1.414) {~} edge (a) edge (d) edge (g);
\end{tikzpicture}
\caption{The M\"{o}bius ladder $\Gamma_3$.}
\label{fig:twcube}
\end{figure}
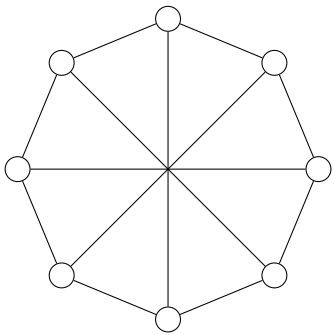

A few relevant pieces of notation from outside graph theory will be used.
Let $\Gamma(z)$ represent the usual gamma function (generalized factorial);
$n! = \Gamma(n+1)$ for integer $n$, but the gamma function is also
well-defined for arbitrary complex arguments.  We will use it only for
nonnegative reals, but not only for integers.  The similarity of notation
between $\Gamma(n+1)$ and $\Gamma_i$ is unfortunate, but these are 
widely-used standard symbols for these concepts. 
Some authors also use $\Gamma(v)$ for the neighbourhood of a vertex $v$; we
avoid that here.

For positive integers $n$ and $m$, let $I_n$ denote the $n\times n$ identity
matrix, and $J_{n,m}$ denote the $n\times m$ matrix with all entries equal to
$1$.  Given an $n \times n$ square matrix $A$, let $\perm{} A$ denote the
\emph{permanent} of $A$.  That is the sum, over all ways to choose $n$
entries from $A$ with one in each row and one in each column, of the product
of the chosen entries.  Note that the definition of the permanent is the
same as the definition of the determinant
without the alternating signs.

%%%%%%%%%%%%%%%%%%%%%%%%%%%%%%%%%%%%%%%%%%%%%%%%%%%%%%%%%%%%%%%%%%%%%%%%

\subsection{Related work}

A number of previous results examine the problem of characterizing 
cycle-maximal graphs and bounding the number of cycles as a function of
girth, degree, or the number of edges for various classes of graphs.
Entringer and Slater \cite{Entringer:Maximum} show that 
some $n$-vertex graph with $m$ edges has at least $2^{m-n}$ cycles and 
every such graph has at most $2^{m-n+1}$ cycles.
Aldred and Thomassen \cite{Aldred:Maximum} improve the upper bound to
$(15/16)2^{m-n+1}$.
Guichard \cite{Guichard:Maximum} examines bounds on the number of cycles
to which any given edge can belong, including a discussion of cubic
graphs and triangle-free graphs.
Alt \emph{et al.}\ \cite{Alt:Number} show that the maximum number of 
cycles in any $n$-vertex planar graph 
is at least $2.27^n$ and at most $3.37^n$.
Buchin \emph{et al.}\ \cite{Buchin:Number} improve these bounds
to $2.4262^n$ and $2.8927^n$, respectively.
De Mier and Noy \cite{DeMier:Maximum}
examine the maximum number of cycles in outerplanar 
and series-parallel graphs. 
Knor \cite{Knor:Number} examines bounds on the maximum number of cycles 
in $k$-connected graphs,
including bounds expressed in terms of the minimum and maximum degrees.
Markstr\"om \cite{Markstrom:Extremal} presents results of a computer search 
examining the minimum and maximum numbers of cycles as a function 
of girth and the number of edges in small graphs.

Several results in extremal graph theory examine bounds on triangle-free
graphs.  Andr{\'a}sfai \emph{et al.}\ \cite{Andrasfai:Connection} show that every
$n$-vertex graph that has chromatic number $r$ but does not contain $K_r$ as
a subgraph has minimum degree at most $n (3r-7)/(3r-4)$.  Brandt
\cite{Brandt:Structure} examines the structure of triangle-free graphs with
minimum degree at least $n/3$.  Brandt and Thomass\'e \cite{Brandt:Dense}
show that every triangle-free graph with minimum degree greater than $n/3$
has chromatic number at most four.  Jin \cite{Jin:Chromatic} gives an upper
bound on the minimum degree of triangle-free graphs with chromatic number
four or greater.  Pach \cite{Pach:Graphs} characterizes triangle-free graphs
in which every independent set has a common neighbour: a triangle-free graph
has that property if and only if it is a maximal triangle-free graph
homomorphic to some $\Gamma_i$.  Brouwer \cite{Brouwer:Finite} provides a
simpler proof of Pach's result.

% $Id: props.tex 3431 2014-10-28 08:08:19Z mskala $

\section{Properties of triangle-free and cycle-maximal graphs}
\label{sec:props}

%%%%%%%%%%%%%%%%%%%%%%%%%%%%%%%%%%%%%%%%%%%%%%%%%%%%%%%%%%%%%%%%%%%%%%%%

This section lists some properties of graphs
that we will use in subsequent sections.
Most of the proofs are simple, or already given by others, but we describe
them for completeness.

First, consider the gamma graphs defined in
Subsection~\ref{sub:definitions}.  This family of graphs recurs throughout
the literature on maximal triangle-free graphs.  They seem to have been
first introduced in 1964 by Andrásfai~\cite{Andrasfai:G18}.  Notation and
the order of labelling the vertices varies among authors; we follow Brandt
and Thomass\'e~\cite{Brandt:Dense} here.  All the $\Gamma_i$ graphs are
$i$-regular, circulant, and three-colourable.  As the following lemma
describes, the $\Gamma_i$ graphs form a hierarchy in which each one is
homomorphic to the next one, and deleting a vertex renders it homomorphic to
the previous one.

\begin{lemma}\label{lem:gamma-deleted}
For all $i>1$, $\Gamma_i$ with one vertex deleted is homomorphic to
$\Gamma_{i-1}$, and $\Gamma_{i-1}$ is homomorphic to $\Gamma_i$.
\end{lemma}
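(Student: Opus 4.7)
The plan is to deduce both halves of the lemma from a single construction: deleting the three equally-spaced vertices $v_0, v_i, v_{2i}$ from $\Gamma_i$ (indices mod $3i-1$) leaves an induced subgraph isomorphic to $\Gamma_{i-1}$. This will immediately yield a homomorphism $\Gamma_{i-1}\to\Gamma_i$ via inclusion of the induced copy, and it will serve as the skeleton onto which I extend a homomorphism $\Gamma_i\setminus v\to\Gamma_{i-1}$ for the first part.

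By the vertex-transitivity of the circulant graph $\Gamma_i$, I may assume the vertex deleted in the first part is $v_0$. Write $V(\Gamma_{i-1})=\{u_0,\ldots,u_{3i-5}\}$ and let $W=V(\Gamma_i)\setminus\{v_0,v_i,v_{2i}\}$; then $W$ is the union of three arcs $\{v_1,\ldots,v_{i-1}\}$, $\{v_{i+1},\ldots,v_{2i-1}\}$, $\{v_{2i+1},\ldots,v_{3i-2}\}$. I define $\phi\colon W\to V(\Gamma_{i-1})$ by $\phi(v_j)=u_{j-\lceil j/i\rceil}$, which shifts these arcs down by $1$, $2$, and $3$ respectively so that they close up into a single cycle of length $3i-4$. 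The central verification is that for all $j,k\in W$, $(j-k)\bmod(3i-1)\in\{i,\ldots,2i-1\}$ if and only if $(\phi(j)-\phi(k))\bmod(3i-4)\in\{i-1,\ldots,2i-3\}$. A case analysis by which arcs contain $j$ and $k$ settles this: in each case the segment-dependent shift applied to $j-k$ either carries it from inside $\{i,\ldots,2i-1\}$ to inside $\{i-1,\ldots,2i-3\}$, or leaves it outside both windows. This establishes that $\phi$ is an isomorphism, and composing $\phi^{-1}$ with the inclusion $W\hookrightarrow V(\Gamma_i)$ proves that $\Gamma_{i-1}$ is homomorphic to $\Gamma_i$.

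For the first direction, I extend $\phi$ to $f\colon V(\Gamma_i)\setminus\{v_0\}\to V(\Gamma_{i-1})$ by setting $f(v_i)=u_{i-2}$ and $f(v_{2i})=u_{2i-3}$. It then remains only to check the edges incident to $v_i$ or $v_{2i}$. The neighbors of $v_i$ in $\Gamma_i\setminus v_0$ are $\{v_{2i},v_{2i+1},\ldots,v_{3i-2}\}$, which $f$ sends to the block $\{u_{2i-3},u_{2i-2},\ldots,u_{3i-5}\}$ of $i-1$ consecutive vertices of $\Gamma_{i-1}$; the differences of these with $i-2$ exactly cover the connection set $\{i-1,\ldots,2i-3\}$. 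A symmetric calculation handles $v_{2i}$ and its neighbor set $\{v_1,\ldots,v_i\}$, and the edge $v_iv_{2i}$ itself is preserved because $(2i-3)-(i-2)=i-1$ lies in the connection set.

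I expect the main obstacle to be the case analysis in the isomorphism step: tracking the segment-dependent shifts over the nine arc-pair cases, and in particular handling the boundary pairs where $(j-k)\bmod(3i-1)$ sits at an endpoint of $\{i,\ldots,2i-1\}$ or where the cyclic wrap-around switches between the two moduli $3i-1$ and $3i-4$.
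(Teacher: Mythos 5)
Your proof is correct, and in fact your extended map $f$ coincides vertex-for-vertex with the homomorphism the paper writes down (delete $v_0=v_{3i-1}$; send $v_j\mapsto w_j$ for $j<i$, $w_{j-1}$ for $i\le j<2i$, $w_{j-2}$ for $j\ge 2i$, so that $v_{i-1},v_i$ and $v_{2i-1},v_{2i}$ are the two identified pairs). What differs is the organization: the paper simply exhibits two maps $f$ and $F$ and asserts that checking neighbourhoods verifies both, whereas you derive everything from the single structural claim that deleting the three equally spaced vertices $v_0,v_i,v_{2i}$ leaves an \emph{induced} copy of $\Gamma_{i-1}$. That claim is true (your shift-by-arc map $\phi$ does carry the integer difference $d=j-k$ to $d-(a-b)$, and the window $\{i,\ldots,2i-1\}$ to $\{i-1,\ldots,2i-3\}$ in every arc-pair case), and it buys you both directions of the lemma at once: the inclusion of the induced copy gives $\Gamma_{i-1}\to\Gamma_i$ for free, and the forward homomorphism reduces to checking only the two added vertices $v_i,v_{2i}$, whose images $u_{i-2},u_{2i-3}$ have exactly the right neighbourhoods. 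Your $F=\phi^{-1}$ differs from the paper's $F$ only in where it sends $w_{2i-2}$ ($v_{2i-1}$ rather than $v_{2i}$); both are valid embeddings. The one point worth stating explicitly in a final write-up is the degenerate third arc when $i=2$ (it is empty, since $2i+1>3i-2$), but the argument goes through there as well.
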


\begin{proof}
Let $v_1,v_2,\ldots,v_{3i-1}$ denote the vertices of $\Gamma_i$ and
$w_1,w_2,\ldots,w_{3i-4}$ denote the vertices of $\Gamma_{i-1}$.
Assume without loss of generality that $v_{3i-1}$ is the vertex deleted from
$\Gamma_i$.  Then define $f$ and $F$ as follows.
\begin{align*}
  f(v_j) &= \begin{cases}
    w_j & \text{if $j<i$}, \\
    w_{j-1} & \text{if $i\le j<2i$}, \\
    w_{j-2} & \text{if $j\ge 2i$};
  \end{cases} \\
  F(w_j) &= \begin{cases}
    v_j & \text{if $j<i$}, \\
    v_{j+1} & \text{if $i\le j <2i-2$}, \\
    v_{j+2} & \text{if $j \ge 2i-2$}.
  \end{cases}
\end{align*}
By checking their effects on the vertex neighbourhoods,
$f$ and $F$ are homomorphisms in both directions between $\Gamma_i$ with the
vertex $v_{3i-1}$ deleted, and $\Gamma_{i-1}$.  Reinserting the deleted
vertex, $\Gamma_{i-1}$ is also homomorphic to $\Gamma_i$.
\end{proof}

Several known results classify triangle-free graphs according to minimum
degree.  In particular, if a triangle-free graph $G$ has $n$ vertices and
minimum degree $\delta(G)$, then
\begin{itemize}
\item for every $i \in \{2,3,\ldots,10\}$, if $\delta(G)>in/(3i-1)$ then
  $G$ is homomorphic to $\Gamma_{i-1}$;
\item if $\delta(G)>2n/5$ then $G$ is bipartite;
\item if $\delta(G)>10n/29$ then $G$ is three-colourable; and
\item if $\delta(G)>n/3$ then $G$ is four-colourable.
\end{itemize}

Jin~\cite{Jin:Minimal} proves that $\delta(G)>in/(3i-1)$ implies $G$
homomorphic to $\Gamma_{i-1}$ for all $i$ up to $10$.  The case $i=2$, which
also implies $G$ is bipartite because $\Gamma_1=K_2$, was first proved by
Andr\'asfai~\cite{Andrasfai:G18}; a later paper, in English, by Andr\'asfai,
Erd\H{o}s, and S\'os, is often cited for this
result~\cite{Andrasfai:Connection}.  H\"{a}ggkvist proved the case
$i=3$~\cite{Haggkvist:Odd}.  Three-colourability when $\delta(G)>10n/29$
follows from the three-colourability of $\Gamma_9$.  Four-colourability when
$\delta(G)>n/3$ is due to Brandt and Thomass\'e~\cite{Brandt:Dense}.

The following property of cycle-maximal graphs applies to graphs of general
girth, not only triangle-free graphs: we can limit consideration to
2-connected graphs.

\begin{lemma}\label{lem:maybe-biconn}
Let  $3\leq g\leq n$.  Among all $n$-vertex cycle-maximal graphs for
girth at least $g$, there is one that is 2-connected.
\end{lemma}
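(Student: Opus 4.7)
Let $G$ be an $n$-vertex cycle-maximal graph of girth at least $g$.  Because $n \ge g \ge 3$, the cycle $C_n$ itself has girth $n \ge g$, so $c(G) \ge 1$ and $G$ contains at least one nontrivial $2$-connected block.  The plan is to normalize $G$ into a $2$-connected graph $G'$ on $n$ vertices with $c(G') = c(G)$ and girth at least $g$.  As a preliminary reduction, if $G$ is disconnected I add an edge between two components (a bridge, which changes neither the cycle count nor the girth), iterating until $G$ is connected; this does not disturb cycle-maximality.

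The core step is to show that $G$ then has exactly one nontrivial (at least three-vertex) block.  Suppose it has two such blocks $B_1, B_2$, and let $v_i$ be the cut vertex of $B_i$ lying on the block-cut path between $B_1$ and $B_2$.  I would use the following sub-claim: in any $2$-connected graph $H$ of girth at least $g$, every vertex $v$ has eccentricity at least $\lfloor g/2 \rfloor$.  Indeed, taking a shortest cycle $C$ through $v$, the antipodal vertex $w$ on $C$ has cycle-distance $\lfloor |C|/2 \rfloor \ge \lfloor g/2 \rfloor$ from $v$; any graph-distance path from $v$ to $w$ shorter than $\lfloor |C|/2 \rfloor$ would combine with one arc of $C$ to yield a strictly shorter cycle through $v$, contradicting the choice of $C$.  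Picking $u_i \in V(B_i)$ that achieves $\mathrm{ecc}_{B_i}(v_i) \ge \lfloor g/2 \rfloor$ and using the cut-vertex decomposition of $G$,
\[
  d_G(u_1,u_2) \;=\; d_{B_1}(u_1,v_1) + d_G(v_1,v_2) + d_{B_2}(v_2,u_2) \;\ge\; 2\lfloor g/2 \rfloor \;\ge\; g-1,
\]
so adding the edge $u_1u_2$ preserves girth at least $g$ and strictly increases the cycle count, contradicting cycle-maximality of $G$.

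Given that $G$ has a unique nontrivial block $B$, every cycle of $G$ lies in $B$, so $c(G) = c(B)$.  I form $G'$ by iteratively subdividing edges of $B$ until the vertex count reaches $n$.  Edge subdivision preserves $2$-connectivity, cannot decrease girth, and leaves the number of cycles unchanged (each original cycle simply becomes longer along the subdivided edge).  Thus $G'$ is $2$-connected, has $n$ vertices and girth at least $g$, and satisfies $c(G') = c(B) = c(G)$, so $G'$ is also cycle-maximal, proving the lemma.

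The principal obstacle is the eccentricity sub-claim, particularly the case where a hypothetical short path $P$ from $v$ to $w$ shares interior vertices with $C$; to handle this, I would let $x$ be the first vertex of $V(C)\setminus\{v\}$ met along $P$, so that the initial sub-path of $P$ from $v$ to $x$ together with the shorter arc of $C$ from $x$ back to $v$ still forms a simple cycle through $v$ of length strictly less than $|C|$, preserving the contradiction in both sub-cases.
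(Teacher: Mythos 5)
Your proof is correct and follows essentially the same route as the paper's: make the graph connected without changing its cycles, use the girth-based distance bound (every vertex of a cycle-containing block is at distance at least $\lfloor g/2 \rfloor$ from some other vertex of that block) to add a girth-preserving edge that contradicts cycle-maximality whenever two nontrivial blocks coexist, and absorb the leftover bridge vertices by subdividing edges of the unique remaining block. The differences are only in the ordering of the reductions, plus the fact that you explicitly prove the eccentricity bound that the paper merely asserts when it claims $v$ and $w$ are each at distance at least $\lfloor g/2 \rfloor$ from the cut vertex $u$.
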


\begin{proof}
Because $g\le n$, there exists a graph with one cycle and these parameters. 
That graph consists of a
cycle of length $g$ and $n-g$ degree-zero vertices.  Therefore any
cycle-maximal graph for girth at least $g$ contains at least one cycle.

Given a disconnected graph with maximal cycle count, choose a vertex $v$;
then choose one vertex in each connected component other than the one
containing $v$, and add an edge from each of those vertices to $v$.  The
resulting connected graph contains all and only the cycles from the
original, so it has the same girth and cycle count.  Therefore we need only
consider connected graphs.

Any block either is a single edge, or contains a
cycle; if it is a single edge, it cannot be part of any cycle.  We can
contract it without removing any cycles nor decreasing the girth, and then
insert one new vertex to replace the one we eliminated, in the middle of
some edge that is part of a cycle.  Therefore we need only consider
blocks that contain cycles, necessarily of at least $g$
vertices.

Suppose there is a cut-vertex $u$.  Removing it would disconnect at least
two blocks; let $v$ and $w$ be two vertices maximally
distant from $u$ that would be disconnected from each other
by the removal of $u$.  Each of $v$ and $w$
is at least distance $\lfloor g/2 \rfloor$ from $u$.  Then by adding an
edge $(v,w)$, we create at least one new cycle, but none of length less
than $g$.
\end{proof}

In the case of triangle-free graphs, Lemma~\ref{lem:maybe-biconn} can be
strengthened to require 2-connectedness in all cycle-maximal graphs.

\begin{corollary}\label{cor:definitely-biconn}
All cycle-maximal triangle-free graphs with at least four vertices are
2-connected.
\end{corollary}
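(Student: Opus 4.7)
The plan is to reduce the statement to the maximal-triangle-free property of $G$, supplied by the forthcoming Lemma \ref{lem:every-edge-cfour} (which the introduction flags as easy: a cycle-maximal triangle-free graph, provided it has any cycles, is maximal triangle-free). First I would verify that $G$ in fact has at least one cycle: the Tur\'an graph $T(n,2)=K_{\lfloor n/2\rfloor,\lceil n/2\rceil}$ is triangle-free and contains $C_4$ for every $n\ge 4$, so cycle-maximality of $G$ gives $c(G)\ge c(T(n,2))\ge 1$. Hence the lemma applies, and $G$ is maximal triangle-free; equivalently, every non-edge of $G$ has a common neighbour.

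Next I would rule out disconnectedness. If $u,v$ lay in different components of $G$, their neighbourhoods in $G$ would be disjoint, so $(u,v)$ would be a non-edge with no common neighbour, contradicting maximal triangle-freeness. Thus $G$ is connected.

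Finally, I would rule out the existence of a cut-vertex. Suppose $u$ is a cut-vertex and $X_1,X_2$ are two components of $G-u$. For any choice of $v\in X_1$ and $w\in X_2$, the non-edge $(v,w)$ must have a common neighbour $x$ in $G$; the constraints $x\in N(v)\subseteq X_1\cup\{u\}$ and $x\in N(w)\subseteq X_2\cup\{u\}$ force $x=u$. Since $v,w$ were arbitrary, $u$ is adjacent to every vertex of $G-u$. Triangle-freeness then forces $G-u$ to be an independent set, whence $G\cong K_{1,n-1}$; but this star is a tree with no cycles, contradicting $c(G)\ge 1$.

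Combining connectedness with the absence of a cut-vertex gives 2-connectedness. The only moderately subtle point is the appeal to Lemma \ref{lem:every-edge-cfour}; after that, everything is an immediate consequence of the defining property of maximal triangle-free graphs, together with the observation that if a maximal triangle-free graph has a universal vertex then it must be a star.
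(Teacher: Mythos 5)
Your argument has a circular dependency that breaks it as written. You derive everything from the claim that $G$ is maximal triangle-free, which you import from Lemma~\ref{lem:every-edge-cfour}. But in this paper Lemma~\ref{lem:every-edge-cfour} is proved \emph{using} Corollary~\ref{cor:definitely-biconn} (its first paragraph invokes 2-connectedness to produce two edge-disjoint $u$--$v$ paths), so you cannot use it to prove Corollary~\ref{cor:definitely-biconn}. This is not merely a citation-ordering problem: the place where the circularity genuinely bites is your connectivity step. To show that a cycle-maximal triangle-free graph is maximal triangle-free, the natural move is to add a missing edge $(u,v)$ whose insertion creates no triangle and observe that the cycle count strictly increases; but if $u$ and $v$ lie in different components, adding that single edge creates \emph{no} new cycle, so cycle-maximality is not violated and no contradiction arises. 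Hence maximal triangle-freeness cannot be established before connectivity is, and your step ``disconnected $\Rightarrow$ a non-edge with no common neighbour $\Rightarrow$ contradiction with maximal triangle-freeness'' has no non-circular support. The paper avoids this by arguing connectivity directly: take $v$ in a component not containing a vertex $u$ of degree at least two, and join $v$ to \emph{all} neighbours of $u$; this creates a new 4-cycle and no triangle, strictly increasing the cycle count.

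The remainder of your argument is sound and in fact rather nicer than the paper's treatment of the cut-vertex case (the paper falls back on the general-girth construction of Lemma~\ref{lem:maybe-biconn}, adding an edge between two vertices far from the cut-vertex, whereas your common-neighbour argument forces the cut-vertex to be universal and hence $G$ to be a star, which has no cycles). To repair your proof, reorder it: first prove connectivity by the multi-edge construction above; then note that in a \emph{connected} graph, any admissible missing edge closes at least one new cycle of length at least four (a path of length two would be a forbidden common neighbour), so $G$ is maximal triangle-free; then run your cut-vertex argument verbatim. You would also need to dispose of the paper's intermediate case of a block consisting of a single edge, which your star argument handles only when the cut-vertex is universal --- but with connectivity and maximal triangle-freeness in hand your common-neighbour argument does cover every cut-vertex, so the repaired proof is complete.
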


\begin{proof}
Suppose $G$ is a cycle-maximal triangle-free graph with at least four
vertices.  Because $C_4$ contains a cycle, $G$ contains at least one cycle
and therefore at least one vertex of degree at least two.  If $G$ is
disconnected, let $u$ and $v$ be two vertices in distinct components and
with the degree of $u$ at least two.  Then add edges from $v$ to all
neighbours of $u$.  These edges do not create any triangles, but create at
least one new cycle through $u$, $v$, and two neighbours of $u$,
contradicting the cycle-maximality of $G$.  Therefore $G$ is connected.

Suppose $G$ contains a block that is a single edge.  Then as in the proof of
Lemma~\ref{lem:maybe-biconn} we can contract it, removing a vertex while keeping
all cycles and not creating any triangles; and then we can add a new vertex
$v$ sharing all the neighbours of some vertex $u$ with degree at least two. 
By doing so we create at least one new cycle through $u$, $v$, and two
neighbours of $u$, contradicting the cycle-maximality of $G$.  Therefore $G$
contains no blocks that are single edges.  All remaining cases are covered
by the last paragraph of the proof of Lemma~\ref{lem:maybe-biconn}.
\end{proof}

The next property is also specific to the triangle-free case: every
edge in a cycle-maximal graph is part of some minimum-length cycle.

\begin{lemma}\label{lem:every-edge-cfour}
If $G$ is a cycle-maximal triangle-free graph with at least four vertices,
then $G$ is maximal triangle-free and every edge in $G$ is in some 4-cycle.
\end{lemma}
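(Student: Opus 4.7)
The plan is to handle the two assertions in order. Maximal triangle-freeness is essentially immediate: if some non-edge $xy$ could be added to $G$ without creating a triangle, then since $G$ is connected (indeed 2-connected by Corollary~\ref{cor:definitely-biconn}) there is an $x$-$y$ path in $G$, so $G + xy$ is a triangle-free graph with strictly more cycles than $G$, contradicting cycle-maximality.

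For the 4-cycle claim, suppose for contradiction that $uv \in E(G)$ lies in no 4-cycle of $G$. Triangle-freeness gives $N(u) \cap N(v) = \emptyset$, and the no-4-cycle assumption says that no vertex of $N(u) \setminus \{v\}$ is adjacent to any vertex of $N(v) \setminus \{u\}$; by Corollary~\ref{cor:definitely-biconn} both sets are nonempty. My central construction is $G' = G - uv + \{uy : y \in N(v) \setminus \{u\}\}$. The only new edges are at $u$, and the new neighbourhood $(N(u) \setminus \{v\}) \cup (N(v) \setminus \{u\})$ of $u$ is independent in $G'$: each part is independent in $G$ by triangle-freeness, and no edge runs between them by the no-4-cycle hypothesis, so $G'$ is triangle-free.

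To compare cycle counts, observe that $G - uv$ equals $G'$ with every new edge deleted, so the cycles of $G$ avoiding $uv$ are exactly the cycles of $G'$ avoiding every new edge. It suffices to compare the cycles of $G$ through $uv$, equivalently simple $u$-$v$ paths $Q = u - v_1 - \cdots - v_{k-1} - v$ in $G - uv$, with the cycles of $G'$ through some new edge. I would define an injection from the former to the latter by sending $Q$ to $u - v_1 - \cdots - v_{k-1} - u$, that is, replacing the last edge of $Q$ by the new edge $u v_{k-1}$. Because $v_1 \in N(u) \setminus \{v\}$ and $v_{k-1} \in N(v) \setminus \{u\}$ lie in disjoint sets, the other edge at $u$ in this cycle is an old edge, so exactly one new edge is used and the map is invertible.

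The main obstacle is extracting strict inequality. When $\deg(v) \geq 3$, I would point to the additional 4-cycles $u - y_1 - v - y_2 - u$ for distinct $y_1, y_2 \in N(v) \setminus \{u\}$; each uses \emph{two} new edges at $u$ and therefore lies outside the image of the injection, so $c(G') > c(G)$, contradicting cycle-maximality of $G$. When $\deg(v) = 2$, there is only one new edge, so no cycle of $G'$ can use two new edges; moreover, $v$ becomes a pendant vertex in $G'$, so no cycle of $G'$ passes through $v$ either. The injection is then a bijection, giving $c(G') = c(G)$, which would make $G'$ itself cycle-maximal, yet Corollary~\ref{cor:definitely-biconn} forbids a cycle-maximal triangle-free graph on at least four vertices from containing a pendant vertex. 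Either branch of the dichotomy rules out the assumption.
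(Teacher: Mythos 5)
Your proof is correct, but your argument for the 4-cycle claim takes a genuinely different route from the paper's. The paper contracts the edge $(u,v)$ into a single vertex $w$ (safe precisely because $(u,v)$ lies in no 4-cycle, so contraction creates no triangle), and then restores the vertex count by adding a twin $w'$ with the same neighbourhood as $w$; the twin pair immediately yields a brand-new 4-cycle through $w$, $w'$, and two common neighbours, so strictness comes for free and no case analysis is needed. You instead keep all $n$ vertices, delete $(u,v)$, and transfer $N(v)\setminus\{u\}$ to $u$; this forces you to do the bookkeeping the paper glosses over --- an explicit injection from cycles of $G$ through $(u,v)$ into cycles of $G'$ using exactly one new edge, justified by the disjointness of $N(u)$ and $N(v)$ --- and to split into cases on $\deg(v)$, invoking Corollary~\ref{cor:definitely-biconn} a second time to kill the $\deg(v)=2$ case where the count does not strictly increase. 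What your version buys is transparency about why no cycles are lost (your map is visibly invertible, whereas the paper's claim that each cycle through $u$ and $v$ avoiding the edge survives as ``at least one cycle using $w$ and $w'$'' is left to the reader); what the paper's version buys is a uniform strictness argument with no degenerate case. For the maximality assertion the two arguments are essentially the same, except that the paper uses two edge-disjoint $u$--$v$ paths from 2-connectedness where a single path already suffices, as you observe.
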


\begin{proof}
Suppose $u$ and $v$ are non-adjacent vertices in $G$ and adding the edge
$(u,v)$ would not create a triangle.  By 2-connectedness
(Corollary~\ref{cor:definitely-biconn}) there exist
two edge-disjoint paths from $u$ to $v$ in $G$, and then adding the edge
$(u,v)$ creates at least two new cycles, contradicting cycle-maximality;
therefore $G$ is maximal triangle-free.

Suppose $(u,v)$ is an edge in $G$ that is not part of any 4-cycle.  Let $G'$
be the graph formed from $G$ by contracting $(u,v)$.  This operation cannot
create any triangles; and $G'$ contains one less vertex than $G$ and all the
cycles of $G$ except any that included both $u$ and $v$ without including
the edge $(u,v)$.  Let $w$ be the vertex created by the edge contraction; and
add a new vertex $w'$ to $G'$ with the same neighbourhood as $w$.  For each
cycle in $G$ that used $u$ and $v$ without the edge between them, the new
graph contains at least one cycle using $w$ and $w'$ instead; and there is
also at least one new 4-cycle through $w$, $w'$, and two of their
neighbours.  (They have at least two neighbours because $G$ was
2-connected.)  Therefore we have increased the number of cycles for an
$n$-vertex triangle-free graph, contradicting cycle-maximality.  Therefore
every edge in $G$ is part of some 4-cycle.
\end{proof}

Also note that by a result of Erd\H{o}s \emph{et
al.}~\cite[Lemma~2.4(ii)]{Erdos:How}, any triangle-free graph (not only
maximal or cycle-maximal) with $n$ vertices and $m$ edges has at least one
edge contained in at least $4m(2m^2-n^3)/n^2(n^2-2m)$ cycles of length four.

Lemma~\ref{lem:maybe-biconn} and Corollary~\ref{cor:definitely-biconn} do
not generalize to higher girth.  A graph consisting of the Petersen graph
plus one vertex added with degree one is cycle-maximal for 11 vertices and
girth at least five, but the edge to the added vertex is not in any 5-cycle,
nor any cycle at all, and the graph is not 2-connected.

Finally, we list some simple equivalent conditions for cycle-maximal
triangle-free graphs to be the Tur\'an graph.  Any counterexample to
Conjecture~\ref{con:main} would have to lack all these properties.

\begin{lemma}\label{lem:equivalent-props}
If $G$ is a cycle-maximal triangle-free graph with $n\ge 4$ vertices,
then these statements are equivalent:
\begin{enumerate}
\item\label{itm:prop-turan} $G$ is the bipartite Tur\'an graph $T(n,2)$;
\item\label{itm:prop-complete} $G$ is complete bipartite;
\item\label{itm:prop-bipartite} $G$ is bipartite;
\item\label{itm:prop-perfect} $G$ is perfect;
\item\label{itm:prop-no-pfour} $G$ contains no induced $P_4$; and
\item\label{itm:prop-min-degree} for $n\ne 5$, $G$ has minimum degree
  greater than $2n/5$.
\end{enumerate}
\end{lemma}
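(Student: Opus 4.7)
The plan is to funnel every condition through condition~(\ref{itm:prop-bipartite}), which serves as a hub, then sharpen ``complete bipartite'' to ``balanced'' using cycle-maximality.

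The chain (\ref{itm:prop-turan}) $\Rightarrow$ (\ref{itm:prop-complete}) $\Rightarrow$ (\ref{itm:prop-bipartite}) is immediate from definitions. For (\ref{itm:prop-bipartite}) $\Rightarrow$ (\ref{itm:prop-complete}), I would invoke Lemma~\ref{lem:every-edge-cfour} to conclude $G$ is maximal triangle-free; in a bipartite graph any missing edge across the two parts can be added without creating a triangle, so $G = K_{a,b}$ for some $a+b=n$. To close the loop, (\ref{itm:prop-complete}) $\Rightarrow$ (\ref{itm:prop-turan}) reduces to a standard balancing argument. Writing
\[
  c(K_{a,b}) = \sum_{k=2}^{\min(a,b)} \frac{(k-1)!\,k!}{2}\binom{a}{k}\binom{b}{k}
\]
(choose $k$ vertices on each side, then count Hamilton cycles of $K_{k,k}$), the swap $(a,b) \mapsto (a-1,b+1)$ multiplies each summand by $(a-k)(b+1)/[a(b+1-k)]$, which exceeds $1$ precisely when $a > b+1$. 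Hence any imbalance is strictly beaten, and cycle-maximality forces $|a-b|\le 1$, i.e., $G = T(n,2)$.

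Each of the remaining conditions pins down to (\ref{itm:prop-bipartite}). For (\ref{itm:prop-perfect}): ``bipartite $\Rightarrow$ perfect'' is classical, and conversely $G$ triangle-free with at least one edge has $\omega(G)=2$, so perfectness gives $\chi(G)=2$. For (\ref{itm:prop-no-pfour}): in $K_{\lfloor n/2\rfloor,\lceil n/2\rceil}$ any four-vertex path alternates parts, so its endpoints lie in opposite parts and are adjacent, ruling out an induced $P_4$; conversely, $P_4$-free graphs are cographs, and in a triangle-free cograph every join step must act on two edgeless sets (else a triangle appears), so $G$ is a disjoint union of complete bipartite graphs, hence bipartite. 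For (\ref{itm:prop-min-degree}): for $n \ge 4$ with $n \ne 5$ the inequality $\lfloor n/2\rfloor > 2n/5$ is a routine check on parities, and the converse is the Andr\'asfai--Erd\H{o}s--S\'os theorem (the $i=2$ case in the list following Lemma~\ref{lem:gamma-deleted}).

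The main obstacle is producing the \emph{strict} inequality in (\ref{itm:prop-complete}) $\Rightarrow$ (\ref{itm:prop-turan}). The ratio calculation above supplies it, but requires $\min(a,b) \ge 2$ so that the summation range is nonempty; this is automatic because cycle-maximality for $n \ge 4$ forces $c(G) \ge 1$, ruling out both the star $K_{1,n-1}$ and the empty bipartition.
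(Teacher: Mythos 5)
Your proof is correct and follows essentially the same route as the paper: reduce conditions (\ref{itm:prop-perfect})--(\ref{itm:prop-min-degree}) to bipartiteness, use maximal-triangle-freeness (via Lemma~\ref{lem:every-edge-cfour}) to upgrade bipartite to complete bipartite, and then a term-by-term balancing argument --- which is exactly the paper's Corollary~\ref{cor:complete-bipartite} in a different but equivalent parametrization --- to force $|a-b|\le 1$. The only divergence is condition (\ref{itm:prop-no-pfour}): the paper routes it through perfection via Seinsche's theorem ($P_4$-free $\Rightarrow$ perfect), whereas you argue directly that a triangle-free cograph decomposes into complete bipartite pieces and is therefore bipartite; both are valid, yours being slightly more self-contained at the cost of invoking the cograph characterization of $P_4$-free graphs.
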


\begin{proof}
The bipartite Tur\'an graph has all the listed properties
($\ref{itm:prop-turan}\Rightarrow \{
\ref{itm:prop-complete},\ref{itm:prop-bipartite},\ref{itm:prop-perfect},
\ref{itm:prop-no-pfour},\ref{itm:prop-min-degree}\}$),
so it remains to
prove the implications in the other direction.
By exact cycle count, $T(n,2)$ maximizes cycles among complete bipartite
graphs (see Corollary~\ref{cor:complete-bipartite};
$\ref{itm:prop-complete}\Rightarrow\ref{itm:prop-turan}$).
If $G$ is bipartite, it is necessarily
complete bipartite in order to be maximal triangle-free
($\ref{itm:prop-bipartite}\Rightarrow\ref{itm:prop-complete}$). 
Triangle-free perfect graphs are bipartite as a trivial consequence of the
definition ($\ref{itm:prop-perfect}\Rightarrow\ref{itm:prop-bipartite}$).
Any graph without an induced $P_4$ is perfect by a result of
Seinsche~\cite{Seinsche:Property}, with a simpler
proof given by Arditti and de~Werra~\cite{Arditti:Note}
($\ref{itm:prop-no-pfour}\Rightarrow\ref{itm:prop-perfect}$).
Any triangle-free graph with minimum degree greater than $2n/5$ is bipartite
($\ref{itm:prop-min-degree}\Rightarrow\ref{itm:prop-bipartite}$)~\cite{Andrasfai:G18,Andrasfai:Connection}.
\end{proof}

Our Theorems~\ref{thm:regular-triangle-free}
and~\ref{thm:near-reg-triangle-free} have the effect of adding ``$G$ is
regular'' to the list of equivalent conditions for
all even $n$, and ``$G$ is near-regular'' for odd $n>91$.

% $Id: bounds.tex 3431 2014-10-28 08:08:19Z mskala $

\section{Bounds on cycle counts}
\label{sec:bounds}

%%%%%%%%%%%%%%%%%%%%%%%%%%%%%%%%%%%%%%%%%%%%%%%%%%%%%%%%%%%%%%%%%%%%%%%%

In this section we prove bounds on the numbers of cycles in certain kinds of
graphs.  We have three basic kinds of bounds, each of which admits some
variations.  First, for the bipartite Tur\'{a}n graph $T(n,2)$ it is
possible to compute the number of cycles exactly for any given $n$, but the
resulting expression is a summation; we also find a reasonably tight
closed-form lower bound.  We can then rule out potential counterexamples to
Conjecture~\ref{con:main} by showing upper bounds on the number of cycles in
other kinds of graphs.  The remaining two kinds of bounds are based on the
number of edges, and on homomorphism.

The asymptotic results come from applying Stirling's approximation for the
factorial
in the following form, which gives precise upper and lower bounds.  Note
that the approximation is actually an approximation for the gamma function,
so we can apply it to non-integer arguments.  The approximation is:

\begin{gather}
  n \ln n - n + \frac{1}{2}\ln n + \frac{1}{2}\ln 2\pi
  \le \ln \Gamma(n+1) \label{eqn:stirling-lb} \, , \text{ and} \\
  \ln \Gamma(n+1) \le
  n \ln n - n + \frac{1}{2}\ln n + \frac{1}{2}\ln 2\pi + \frac{1}{12}\cdot
  \frac{1}{n} \, . \label{eqn:stirling-ub}
\end{gather}

Our general approach will be to prove bounds on $\ln c(G)$ as a function of
$n$ for $G$ in various classes of graphs.  The bounds typically take the
form $n \ln n - cn + O(\ln n)$ for some constant coefficient $c \ge 1$. 
These amount to proofs that the number of cycles is on the order of $n!$
divided by some exponential function, with the coefficient of $n$ in $\ln
c(G)$ describing the size of the exponential function.  Comparing the
coefficients suffices to show that one class of graphs has more cycles than
another for sufficiently large $n$; and with more careful attention to the
lower-order terms we can bound the values of $n$ that are ``sufficiently
large,'' leaving a known finite number of smaller cases to address with
other techniques.

%%%%%%%%%%%%%%%%%%%%%%%%%%%%%%%%%%%%%%%%%%%%%%%%%%%%%%%%%%%%%%%%%%%%%%%%

\subsection{Cycles in $T(n,2)$}

It is relatively easy to count cycles in the
bipartite Tur\'an graph $T(n,2)$.  The following result gives the exact
count as a summation, and an asymptotic approximation.

\begin{lemma}
The number of cycles in $T(n,2)$ is given exactly by
\begin{equation}
  c(T(n,2)) =
  \sum_{k=2}^{\lfloor n/2 \rfloor} \frac{\lfloor n/2 \rfloor!\lceil n/2
  \rceil!}{2k(\lfloor n/2 \rfloor-k)!(\lceil n/2 \rceil-k)!} \, ,
  \label{eqn:ktwo-exact}
\end{equation}
and satisfies the bound
\begin{equation}
\begin{aligned}
  \ln c(T(n,2)) &\ge n \ln n -(1+\ln 2)n + \ln \pi \\
  &\approx n \ln n - 1.693147n + 1.44730 \, .
\end{aligned}
\label{eqn:ktwo-numerical}
\end{equation}
\end{lemma}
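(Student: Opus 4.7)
The proof has two parts: the exact summation and the asymptotic lower bound.

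For the exact count, the plan is to classify cycles by length. Since $T(n,2)$ is bipartite with parts of sizes $a=\lfloor n/2\rfloor$ and $b=\lceil n/2\rceil$, every cycle has even length $2k$ with $k\ge 2$. A $2k$-cycle uses exactly $k$ vertices from each part. I would count such cycles by first choosing the two vertex sets in $\binom{a}{k}\binom{b}{k}$ ways, and then counting the number of Hamiltonian cycles in the induced $K_{k,k}$: fixing one vertex of the smaller side removes cyclic and reflective symmetry, leaving $k!(k-1)!/2$ alternating arrangements. Multiplying and simplifying,
\[
\binom{a}{k}\binom{b}{k}\cdot\frac{k!(k-1)!}{2}
=\frac{a!\,b!}{2k\,(a-k)!\,(b-k)!},
\]
which, summed over $k=2,\ldots,\lfloor n/2\rfloor$, gives \eqref{eqn:ktwo-exact}.

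For the asymptotic bound, I would discard all terms except the largest. The ``Hamiltonian'' term $k=\lfloor n/2\rfloor$ gives the lower bound
\[
c(T(n,2))\ge\frac{\lfloor n/2\rfloor!\,\lceil n/2\rceil!}{2\lfloor n/2\rfloor},
\]
which I would then estimate using the Stirling inequality \eqref{eqn:stirling-lb}. In the even case $n=2m$ this reduces immediately to $(m!)^2/(2m)$, and substituting Stirling produces $n\ln n-(1+\ln 2)n+\ln\pi$ almost on the nose, with a small positive $\tfrac{1}{2}\ln(m/m)=0$ slack. In the odd case $n=2m+1$ the bound becomes $m!\,(m+1)!/(2m)$; applying Stirling to each factorial yields a sum $m\ln m+(m+1)\ln(m+1)$ which I would compare to the target $(2m+1)\ln(m+\tfrac{1}{2})$ using convexity of $x\ln x$ (namely $\tfrac{1}{2}(m\ln m+(m+1)\ln(m+1))\ge(m+\tfrac{1}{2})\ln(m+\tfrac{1}{2})$). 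The residual $\tfrac{1}{2}\ln((m+1)/m)$ term is nonnegative and can simply be dropped.

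The hard part, modest though it is, is handling the odd case uniformly with the even case so that the stated constant $\ln\pi$ actually emerges in both parities. An alternative route would be to write everything in terms of the gamma function and apply \eqref{eqn:stirling-lb} directly at the half-integer argument $n/2$, avoiding the case split entirely; however, this obscures the combinatorial meaning of the dominant term. I would therefore present the even case as the clean prototype, then verify the odd case with the short convexity remark above. The remaining steps (algebraic simplification of $2m\ln m-2m=n\ln n-(1+\ln 2)n$ and numerical evaluation $1+\ln 2\approx 1.693147$, $\ln\pi\approx 1.14473$) are routine.
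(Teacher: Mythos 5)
Your proposal is correct and follows essentially the same route as the paper: the same double count of $2k$-cycles for the exact formula (ordered $k$-selections divided by $2k$ versus your unordered selections times $k!(k-1)!/2$ are the same computation), and the same dominant-term-plus-Stirling estimate for the lower bound, with the paper handling both parities at once via $\lfloor n/2\rfloor!\,\lceil n/2\rceil!\ge\Gamma((n/2)+1)^2$ and Stirling at the half-integer argument --- precisely the alternative you mention in lieu of your even/odd split with convexity. Incidentally, your value $\ln\pi\approx 1.14473$ is the correct one; the $1.44730$ printed in the lemma statement is a digit transposition.
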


\begin{proof}
To describe a cycle in the bipartite Tur\'{a}n graph $T(n,2)$, we can
start by choosing a value $k$ to be the number of vertices the cycle
includes on each side of the bipartite graph.  The length of the cycle will
be $2k$, and necessarily $2 \le k \le \lfloor n/2 \rfloor$.  Then we choose
a permutation for $k$ of the $\lfloor n/2 \rfloor$ vertices in the smaller
part, and a permutation for $k$ of the $\lceil n/2 \rceil$ vertices in the
larger part.  These choices will describe each possible cycle $2k$ times,
because there are $k$ equivalent starting points and two equivalent
directions.  Therefore we divide by $2k$ to avoid overcounting, and the
overall total number of cycles is given by \eqref{eqn:ktwo-exact}.

The term for $k=\lfloor n/2 \rfloor$ is by far the largest, so we can use it
alone as a reasonably tight lower bound.  The factorials in the denominator
become one and drop out.  By the properties of the
gamma function, $\lfloor n/2 \rfloor!\lceil n/2 \rceil! \ge
\Gamma((n/2)+1)^2$,
so we can drop the floors and ceilings in the numerator,
use gamma instead of factorial, and have a valid lower bound for both even
and odd $n$.  Similarly, replacing
$2\lfloor n/2\rfloor$ by $n$ in the denominator does not increase the bound.
We have:
\begin{equation*}
  c(T(n,2)) \ge \frac{\Gamma((n/2)+1)^2}{n} \, .
\end{equation*}

Applying Stirling's approximation \eqref{eqn:stirling-lb}
gives \eqref{eqn:ktwo-numerical}.
\end{proof}

The following corollary confirms the intuition that $T(n,2)$ should
have more cycles than a less-balanced complete bipartite graph.

\begin{corollary}\label{cor:complete-bipartite}
The graph $T(n,2)$ for $n\ge 4$ is uniquely
cycle-maximal among complete bipartite graphs on $n$ vertices.
\end{corollary}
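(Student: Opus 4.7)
The plan is to show that moving a vertex from the larger side of a complete bipartite graph to the smaller side strictly increases the cycle count, whenever the sides are unequal. Since any complete bipartite graph other than $T(n,2)$ has unequal sides and can be balanced by a sequence of such moves, this will yield the uniqueness claim directly.

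More concretely, let $K_{s,t}$ with $s+t=n$ and $1\le s<t$ be an arbitrary unbalanced complete bipartite graph. The exact formula from the preceding lemma, rewritten per cycle length, gives that the number of $2k$-cycles in $K_{s,t}$ equals
\begin{equation*}
  a_k(s,t) = \binom{s}{k}\binom{t}{k}\,\frac{k!\,(k-1)!}{2} \, ,
\end{equation*}
valid for $2 \le k \le s$, and so $c(K_{s,t}) = \sum_{k=2}^{s} a_k(s,t)$. First I would compute the ratio
\begin{equation*}
  \frac{a_k(s+1,t-1)}{a_k(s,t)}
  = \frac{\binom{s+1}{k}\binom{t-1}{k}}{\binom{s}{k}\binom{t}{k}}
  = \frac{(s+1)(t-k)}{(s+1-k)\,t} \, ,
\end{equation*}
and observe that this is strictly greater than $1$ precisely when $kt > k(s+1)$, which holds for every $k\ge 1$ because $t\ge s+1$. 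Thus each term already present in $c(K_{s,t})$ strictly increases in $c(K_{s+1,t-1})$.

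Next I would check that no terms are lost: the summation for $K_{s+1,t-1}$ runs up to $\min(s+1,t-1)\ge s$, so it contains at least all the $k$ from the summation for $K_{s,t}$, and possibly one additional positive term. Combined with the strict term-wise inequality above, this gives $c(K_{s+1,t-1}) > c(K_{s,t})$ whenever $s<t$. Iterating until $|s-t|\le 1$ produces $T(n,2)$ and shows that every other complete bipartite graph on $n$ vertices has strictly fewer cycles; hence $T(n,2)$ is the unique cycle-maximal complete bipartite graph on $n$ vertices for $n\ge 4$.

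There is no real obstacle here; the entire argument reduces to the elementary ratio computation. The only point warranting care is the bookkeeping that the index range for the summation does not shrink under the rebalancing move, so that the strict term-wise comparison actually implies strict inequality of the sums.
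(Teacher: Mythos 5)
Your argument is essentially the paper's own proof: both start from the exact per-length count $\frac{s!\,t!}{2k(s-k)!(t-k)!}$ and show that moving a vertex from the larger side to the smaller strictly increases every term already present and possibly adds a new positive one, the paper phrasing the term-wise increase via the factors $(s-j)(t-j)$ and you via the binomial ratio $\frac{(s+1)(t-k)}{(s+1-k)t}$. One small correction: that ratio exceeds $1$ precisely when $t>s+1$, not when $t\ge s+1$ (at $t=s+1$ the move produces an isomorphic graph and the ratio equals $1$), so the clause ``which holds for every $k\ge 1$ because $t\ge s+1$'' is off by one; since your iteration only applies the move while $t\ge s+2$, the conclusion is unaffected.
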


\begin{proof}
The requirement $n\ge 4$ is to rule out pathological cases in which no
cycles are possible at all.  Let $a$ and $b$ represent the sizes of the two
parts, with $n=a+b$ and assume without loss of generality $a\le b$.  The
number of cycles in $K_{a,b}$ is a suitably modified version of
\eqref{eqn:ktwo-exact}:
\begin{align*}
  c(K_{a,b})
    &= \sum_{k=2}^{a} \frac{a!b!}{2k(a-k)!(b-k)!} \\
   &= \sum_{k=2}^{a} \frac{1}{2k}
    \cdot (ab)
    \cdot \left( (a-1)\cdot(b-1) \right)
    \cdots
    \left( (a-k+1)\cdot(b-k+1) \right) \, .
\end{align*}

If $b>a+1$, then subtracting one from $b$ and adding one to $a$ will
strictly increase all the factors $(ab)$,
$\left( (a-1)\cdot(b-1)\right)$, and so on.
Making this change will also add an additional positive term to the sum.
Therefore the sum is uniquely
maximized when $b\le a+1$, which means the graph is $T(n,2)$.
\end{proof}

%%%%%%%%%%%%%%%%%%%%%%%%%%%%%%%%%%%%%%%%%%%%%%%%%%%%%%%%%%%%%%%%%%%%%%%%

\subsection{Cycles as a function of number of edges}

It seems intuitively reasonable that more edges should mean more cycles. 
We can make that more precise by giving an upper bound on number
of cycles as a function of number of edges, and therefore (by comparison
with the previous bound) a lower bound on number of edges necessary for a
graph to potentially exceed the number of cycles in the bipartite Tur\'{a}n
graph.  First, we define notation for the maximal product of a constrained
sequence of integers, which will be used in bounding the cycle count.

\begin{definition}\label{def:pi-func}
Let $\Pi(n,m)$, with $2\le m \le \binom{n}{2}$,
denote the greatest possible product for any $k<
n$ of a sequence of positive integers $c_1,c_2,\ldots,c_{k}$ with
$c_i\le n-i$ for all $1 \le i\le k$ and $\sum_{i=1}^{k} c_i = m$.
\end{definition}

The following lemma describes the value of $\Pi(n,m)$.

\begin{lemma}\label{lem:pifunc-value}
If $m=\binom{n}{2}$, then
\begin{equation}
  \Pi(n,m) = (n-1)! \,. \label{eqn:pi-factorial}
\end{equation}
If $2\le m \le 3n-7$, then
\begin{equation}
  \Pi(n,m) =
    \begin{cases}
      3^{m/3}
        & \text{ for } m \equiv 0 \pmod{3}; \\
      4\cdot 3^{(m-4)/3}
        & \text{ for } m \equiv 1 \pmod{3}; \text{ and} \\
      2\cdot 3^{(m-2)/3}
        & \text{ for } m\equiv 2 \pmod{3}\, .
    \end{cases} \label{eqn:pi-unconstrained}
\end{equation}
If $3n-7<m<\binom{n}{2}$, then $k=n-2$ and there exist integers
$s\ge 3$ and $t\ge 0$ such that
\begin{equation}
  \Pi(n,m) = (s+1)^t s^{n-s-t} (s-1)! \, . \label{eqn:pi-constrained}
\end{equation}
\end{lemma}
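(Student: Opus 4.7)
The plan is to handle the three cases separately, using exchange arguments that transform any feasible sequence into a canonical optimum. Case~1 ($m=\binom{n}{2}$) is immediate: $\sum_{i=1}^{k} c_i \le \sum_{i=1}^{n-1}(n-i)=\binom{n}{2}$, with equality forcing $k=n-1$ and $c_i=n-i$ for all $i$, so the product is $(n-1)!$.

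For Case~2 ($2\le m\le 3n-7$) I would first establish the classical unconstrained bound on $\prod c_i$ over multisets of positive integers summing to $m$, via the standard exchanges $(1,a)\mapsto(a+1)$ for $a\ge 2$, $a\mapsto(3,a-3)$ for $a\ge 5$, and $(2,2,2)$ or $(4,2)$ to $(3,3)$, each of which strictly increases the product at fixed sum. The result is a multiset of $3$s together with at most one $2$ or one $4$ determined by $m\bmod 3$, giving \eqref{eqn:pi-unconstrained}. This multiset has at most $\lceil m/3\rceil\le n-2$ entries of size at most $4$; when sorted non-increasingly into positions $1,\ldots,k$, the inequality $c_i\le n-i$ is directly verified in this range.

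Case~3 ($3n-7<m<\binom{n}{2}$) is the main work. Take an optimum, sort it non-increasingly, and drop any trailing $1$, which preserves the product and gives $k\le n-2$. I would then show by induction on $j=2,3,\ldots$ that $c_{n-j}=j$ up to the first $j=s$ at which the induction halts. At stage $j$, sort order and the positional constraint force $c_{n-j}\in\{j-1,j\}$; if $c_{n-j}=j-1$ and some entry $c_l>j$ lies above, the exchange that raises $c_{n-j}$ by one and lowers $c_l$ by one multiplies the product by $j(c_l-1)/((j-1)c_l)>1$, so optimality drives $c_{n-j}=j$ unless no such $c_l$ exists. The base case $j=2$ uses $m>3n-7>2(n-3)+1$ to guarantee some $c_l\ge 3$. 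Once the induction halts at $s$, all top entries $c_1,\ldots,c_{n-s}$ lie in $\{s,s+1\}$: a ``flatten by one'' exchange removes any pair differing by at least two, and the halting condition itself rules out values of $s+2$ or larger. Writing $t$ for the number of copies of $s+1$, feasibility at position $t$ gives $t\le n-s-1$. The resulting sequence has length $(n-s-t)+t+(s-2)=n-2$ and product $(s+1)^t s^{n-s-t}(s-1)!$, because $(s-1)(s-2)\cdots 2=(s-1)!$.

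The main obstacle is the inductive step in Case~3: the staircase induction must simultaneously certify the terminating value $s$ and exclude top entries of size at least $s+2$, since both conclusions come from the same exchange inequality, so the proof has to be arranged so that the halting condition and the cap on top values emerge together. A related subtlety is verifying that each exchange remains feasible after restoring sort order, which follows because the constraints $c_i\le n-i$ are preserved under non-increasing sorting of the multiset.
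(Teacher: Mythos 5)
Your proposal is correct and follows essentially the same route as the paper's proof: both rest on local exchange arguments (absorbing $1$s, splitting or balancing large terms, trading $2$s for $3$s) that never decrease the product and drive any optimum to the stated canonical form, with non-increasing sorting preserving the positional constraints $c_i\le n-i$. The one caution is the indexing of $s$ in your Case~3 — as written, the staircase induction halts at $j=s+1$ rather than $j=s$ if $s$ is to match the base in \eqref{eqn:pi-constrained} — but this is bookkeeping you already flag, and the paper's own treatment of this case is no more detailed than yours.
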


\begin{proof}
In the case $m=\binom{n}{2}$, the only sequence satisfying the constraints
is $n-1, n-2, \ldots, 1$ and $\Pi$ is the product of that sequence, giving
\eqref{eqn:pi-factorial}.

Sorting the $c_i$ into nonincreasing order cannot cause them to violate the
constraints, so we assume it.  Removing a $c_i$ term greater than $3$ and
replacing it with two terms $c_i-2$ and $2$ will never decrease the product. 
Removing a term equal to $1$ and adding $1$ to some other term will always
increase the product, as will removing three terms equal to $2$ and
replacing them with two terms equal to $3$.  Repeated
application of these rules uniquely determines a sequence ending with at
most two terms equal to $2$, all other terms equal to $3$, and if
the constraints allow this sequence, then it determines $\Pi$, giving
\eqref{eqn:pi-unconstrained}.

Subtracting $1$ from a term $c_i>3$ and adding $1$ to some other term less
than $c_i-1$ will always increase the product.  Repeated application of that
operation and the operations used for \eqref{eqn:pi-unconstrained}, wherever
permitted by the constraints, uniquely determines a sequence in the form
given by \eqref{eqn:pi-constrained}.
\end{proof}

Now the $\Pi$ function is applied to bound the number of cycles.

\begin{lemma}\label{lem:edge-bound}
If a graph $G$ has $n$ vertices, $m$ edges,
and girth at least $g$, then
\begin{equation}
  c(G) \le \Pi(n-1,m) \frac{n^2}{2g} \label{eqn:edges-pifunc}
    \, ,
\end{equation}
and if $3n-7<m<\binom{n}{2}$,
\begin{equation}
  \ln c(G) \le
    n \ln n
    - (\alpha - \ln \alpha) n
    + \frac{5}{2} \ln n
    + \frac{1}{2} \ln \alpha 
    + \frac{1}{2}\ln \frac{\pi}{2}
    - \ln g
    + \frac{1}{12\alpha n}
    \label{eqn:edges-stirling}
    \, ,
\end{equation}
where $\alpha = 1-\sqrt{1-1/n-2(m+1)/n^2}$.
\end{lemma}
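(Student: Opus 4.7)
The plan is to bound $c(G)$ by counting cycles through each vertex, summing, and using the max-product function $\Pi$ to bound the per-vertex count. Since each cycle of length $\ge g$ contributes $|V(C)| \ge g$ incidences to the sum over vertices,
\begin{equation*}
  c(G) \le \frac{1}{g}\sum_{v \in V(G)}|\{C : v \in V(C)\}|.
\end{equation*}
A cycle through $v$ corresponds to exactly two rooted directed traversals starting at $v$ (one per orientation), so $|\{C : v\in V(C)\}|$ equals half the number of rooted directed cycle traversals from $v$. Each such traversal begins with an edge $(v, v_1)$ for some $v_1 \in N(v)$; dropping the closing-edge condition, the number of traversals beginning with $(v, v_1)$ is at most $K(v, v_1)$, the number of paths in $G$ of at least three vertices starting with the ordered pair $(v, v_1)$.

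The key step is to show $K(v_0, v_1) \le \Pi(n-1,m)$ for every ordered pair $(v_0, v_1)$. For any path extension $(v_0, v_1, v_2, \ldots, v_k)$, the branching number $c_j = |N(v_j) \setminus \{v_0, \ldots, v_j\}|$, the number of valid choices for $v_{j+1}$ given the prefix through $v_j$, satisfies $c_j \le n - 1 - j$ (since only $n-1-j$ vertices remain fresh after $v_0,\ldots,v_j$ are placed) and, along any one path, $\sum_j c_j \le m$ (since each edge of $G$ is counted at most once across the branching steps, at the unique step when one endpoint has just been placed and the other is still fresh). These constraints exactly match the definition of $\Pi(n-1,m)$. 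The main obstacle is to pass from the bound $\prod_j c_j \le \Pi(n-1,m)$ along a single path to a bound on the \emph{total} number of such paths; I expect to close this by an exchange argument parallel to the proof of Lemma~\ref{lem:pifunc-value}, showing that the tree of all path extensions of $(v_0, v_1)$ can be restructured --- preserving both constraints --- into the canonical maximizing tree, whose total non-root node count equals $\Pi(n-1, m)$. Granting this, the number of rooted directed cycle traversals from $v$ is at most $\deg(v)\cdot\Pi(n-1,m)$, so $|\{C:v\in V(C)\}| \le \deg(v)\cdot\Pi(n-1,m)/2$. Summing and using $\sum_v \deg(v) = 2m \le n^2$ gives $c(G) \le m\Pi(n-1,m)/g \le n^2\Pi(n-1,m)/(2g)$, which is \eqref{eqn:edges-pifunc}.

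For \eqref{eqn:edges-stirling}, I would substitute the formula \eqref{eqn:pi-constrained}, namely $\Pi(n-1,m) = (s+1)^t s^{n-1-s-t}(s-1)!$, valid for $3n-7 < m < \binom{n}{2}$. Taking logarithms of \eqref{eqn:edges-pifunc} and applying Stirling's upper bound \eqref{eqn:stirling-ub} to $\ln(s-1)!$ gives an expression in $s$, $t$, and $n$. The integer parameters $s, t$ are determined by the sum constraint from the definition of $\Pi$; relaxing $s$ to the continuous variable $s = \alpha n$ converts that constraint into a quadratic whose relevant root is $\alpha = 1 - \sqrt{1 - 1/n - 2(m+1)/n^2}$. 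Substituting back and collecting corrections --- tracking how the Stirling remainder $1/(12(s-1))$ becomes $1/(12\alpha n)$, how $2\ln n - \ln(2g)$ combines into the $(5/2)\ln n - \ln g$ and $(1/2)\ln\alpha$ terms, and how the Stirling constants assemble into $(1/2)\ln(\pi/2)$ --- yields \eqref{eqn:edges-stirling}. The main computational difficulty in this step is maintaining precise control over all the integer-vs-continuous approximation errors so that the bound takes exactly the stated form.
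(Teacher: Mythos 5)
Your overall strategy is a close cousin of the paper's: both arguments bound a successor-choice process by branching factors $c_j$ subject to $c_j\le n-1-j$ and $\sum_j c_j\le m$, and then invoke $\Pi$. Your edge-budget accounting (each edge charged at the unique step where one endpoint is placed while the other is still fresh) and your derivation of the $1/(2g)$ factor (from vertex incidences and orientations, rather than the paper's rotations and orientations) are both sound. The genuine gap is the key step $K(v_0,v_1)\le\Pi(n-1,m)$. Here $K(v_0,v_1)$ counts \emph{all} paths of at least three vertices extending $(v_0,v_1)$ --- that is, all non-root nodes of the path-extension tree --- and you assert that the canonical maximizing tree has total non-root node count equal to $\Pi(n-1,m)$. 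It does not: a tree with uniform branching $c_1,\ldots,c_k$ has $c_1+c_1c_2+\cdots+c_1c_2\cdots c_k$ non-root nodes, which exceeds the leaf count $\prod_i c_i=\Pi(n-1,m)$ by a factor of $1+1/c_k+1/(c_kc_{k-1})+\cdots>1$ whenever $k\ge2$ (in the complete graph the total is roughly $e$ times the leaf count). The quantity $\Pi$ bounds the number of \emph{complete} choice sequences, i.e.\ leaves --- that much does follow by a straightforward induction on depth --- but not the number of all prefixes, which is what you need because a cycle may close at any length. The paper sidesteps exactly this by spending an explicit factor of $n$ on choosing the \emph{terminal} vertex of the cycle, so that each cycle is charged to a pair (full successor sequence, terminal vertex) and only the at most $\Pi$ full sequences are ever counted. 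Your remaining slack ($\deg(v)\le n-1$ and $2m\le n^2$) does not absorb the lost constant when the graph is dense, and the dense range $3n-7<m<\binom{n}{2}$ is precisely where \eqref{eqn:edges-stirling} is applied; as written your argument yields only $c(G)\le C\,\Pi(n-1,m)\,n^2/(2g)$ for some constant $C>1$, which would add $\ln C$ to the right side of \eqref{eqn:edges-stirling} and spoil the explicit numerical comparisons the lemma is used for. To repair it, either adopt the paper's device of selecting the last vertex (an extra factor of $n$ your bookkeeping has not budgeted for), or prove a separate bound on the total node count of the constrained tree rather than reusing $\Pi$.

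Your sketch of the second inequality --- substituting \eqref{eqn:pi-constrained} from Lemma~\ref{lem:pifunc-value}, relaxing to a continuous parameter $\alpha$, solving the quadratic from the sum constraint, and applying Stirling's upper bound --- is the same route the paper takes, and the issues you flag (direction of the floor/ceiling relaxations, tracking the $1/(12\alpha n)$ remainder) are exactly the ones the paper resolves; that part is fine as an outline, but it inherits whatever constant is lost in the first part.
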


\begin{proof}
Suppose we are counting Hamiltonian cycles in a complete graph.  We
might start at the first vertex, leave via one of its $n-1$ edges, then from
the next vertex, choose one of the $n-2$ edges remaining (excluding the one
from the first vertex), and so on.  At the last vertex, there are no
remaining edges to previously unvisited vertices, and we return to the
starting point.  Overall there are $(n-1)!$ choices of successor vertices,
which suffices as an upper bound.  Note that $(n-1)!$ is the product of
$n-1$ positive integer factors whose sum is exactly the number of edges in
the complete graph.  Every time we consider an edge as a choice for leaving
a vertex, that edge is eliminated from consideration for all future
vertices, hence the bound on the sum.  The last few factors in the sequence
are $3,2,1$ because we can only visit a previously unvisited vertex
and no term can be greater than the number of previously unvisited vertices
that remain.

For a more general graph $G$ with $n$ vertices, $m$ edges, girth at
least $g$, and cycles that might not be maximal length, we can follow a
similar procedure.  There are at most $n-1$ positive integers representing
choices of successors of all but the last vertex; their sum is at most $m$;
and if the factors are $c_1,c_2,\ldots,c_{k}$, the remaining vertex
constraint is $c_i\le n-i$ for all $1 \le i \le k$.  By
definition, $\Pi(n,m)$ is an upper bound on the product of such a
sequence.

Since we are not requiring cycles to be Hamiltonian, we cannot assume that
any single vertex is the first one in the cycle or is in the cycle at all,
so we multiply the bound by $n$ to account for choosing any starting vertex. 
To account for choosing the length, we multiply by $n$ for
choosing which vertex is the last vertex, assume that the cycle closes as
soon as it reaches that vertex, and then any remaining choices we may have
counted for vertices not in the cycle will only go to make the upper bound a
little less tight.  Finally, we can remove a small amount of overcounting. 
With a girth of $g$ (necessarily at least $3$) there will be $g$ distinct
choices of starting vertex that actually generate the same cycle; and we can
always generate each cycle in two equivalent directions.  So we can divide
by $2g$ and still have a valid upper bound.  Multiplying $\Pi(n,m)$ for
choices of successors with $n^2$ for choices of starting and ending
vertices, and dividing by $2g$, gives exactly the bound
\eqref{eqn:edges-pifunc}.

The form of the sequence $c_i$ that achieves $\Pi(n,m)$ is described in
Lemma~\ref{lem:pifunc-value}.  In the case $3n-7<m<\binom{n}{2}$ (dense but
not complete graphs), this sequence is of length $n-2$ and in general is
of the form $\lceil \alpha n \rceil, \ldots, \lceil \alpha n \rceil,
\lfloor \alpha n \rfloor, \ldots, \lfloor \alpha n \rfloor,
\lfloor \alpha n \rfloor -1, \lfloor \alpha n \rfloor-2, \ldots, 4,3,2$, for
some $\alpha$ chosen so that the sum of the sequence is $m$. 
Note that there is no final factor of $1$ counted in the sequence, because
adding it to an earlier term gives a greater product.
These factors are shown schematically in Figure~\ref{fig:cg-factors}.

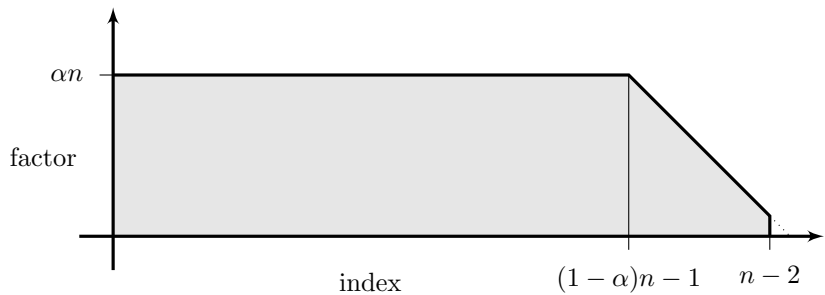
\begin{figure}
  \begin{tikzpicture}[>=latex',scale=0.9]
    \fill[color=black!10!white] (0,0)--(0,2.384)--(7.616,2.384)
      --(9.7,0.3)--(9.7,0)--cycle;
    \draw (-0.2,2.384)--(0,2.384);
    \draw (7.616,-0.2)--(7.616,2.384);
    \draw (9.7,-0.2)--(9.7,0);
    \node[anchor=east] at (-0.3,2.384) {$\alpha n$};
    \node[anchor=east] at (-0.4,1.172) {factor};
    \node[anchor=north] at (7.616,-0.3) {$(1-\alpha)n-1$};
    \node[anchor=north] at (9.7,-0.3) {$n-2$};
    \node[anchor=north] at (3.8,-0.4) {index};
    \draw[dotted] (9.7,0.3)--(10,0);
    \draw[very thick] (0,2.384)--(7.616,2.384)--(9.7,0.3)--(9.7,0);
    \draw[very thick,->] (-0.5,0)--(10.5,0);
    \draw[very thick,->] (0,-0.5)--(0,3.384);
  \end{tikzpicture}
  \caption{Factors in the upper bound on $\Pi(n,m)$.}
  \label{fig:cg-factors}
\end{figure}

If $\alpha n$ is an integer, then this product is $(\alpha
n)^{(1-\alpha)n}(\alpha n -1)!$.  
The sum is $(\alpha n)(1-\alpha n)n+(\alpha n -1 + \alpha n -2 + \cdots +
3 + 2)$.  Setting that to $m$ and applying the usual formula for the sum of
consecutive integers, we have $(-n^2/2)\alpha^2 + (n^2-n/2)\alpha -m-1=0$. 
Solving the quadratic, choosing the solution between $0$ and $1$, and
removing some terms for an upper bound, gives
\begin{align*}
  \alpha &= 1-\frac{1}{2n}
      -\sqrt{1-\frac{1}{n}+\frac{1}{4n^2}-\frac{2(m+1)}{n^2}} \\
    &\le 1-\sqrt{1-\frac{1}{n}-\frac{2(m+1)}{n^2}} \, .
\end{align*}

For $\alpha n$ not an integer,
removing the floors and ceilings outside the factorial can only increase the
product, because making those terms equal maximizes their product given that
their sum is fixed.
The factorial is at most $\lceil \alpha n -1 \rceil!$,
and changing it to
$\Gamma(\lceil \alpha n -1 \rceil + 1) \le \Gamma(\alpha n +1)$
similarly cannot decrease the product.  Where
$\alpha=1-\sqrt{1-1/n-2(m+1)/n^2}$,
we have
\begin{equation*}
  \Pi(n,m) \le
    (\alpha n)^{(1-\alpha)n} \Gamma(\alpha n + 1) \, .
\end{equation*}
The result \eqref{eqn:edges-stirling} follows by Stirling's
approximation~\eqref{eqn:stirling-ub}:
\begin{align*}
\ln c(G) &\le (1-\alpha)n \ln \alpha n + \ln \Gamma(\alpha n+1)
    + \ln \frac{n^2}{2g} \\
  &\le n \ln n
    - (\alpha - \ln \alpha) n
    + \frac{5}{2} \ln n
    + \frac{1}{2} \ln \alpha 
    + \frac{1}{2}\ln \frac{\pi}{2}
    - \ln g
    + \frac{1}{12\alpha n} \, . \qedhere
\end{align*}
\end{proof}

A cycle-maximal triangle-free graph $G$ necessarily contains
enough edges for \eqref{eqn:edges-stirling} to exceed
\eqref{eqn:ktwo-numerical}.  For sufficiently large $n$, the coefficients of
$n$ in the bounds on $\ln c(G)$
will determine which bound is greater; for \eqref{eqn:edges-stirling} to
exceed \eqref{eqn:ktwo-numerical} requires that
$\alpha-\ln \alpha\le 1+\ln 2$. 
Then $\alpha \ge 0.231961\ldots$ and $2m/n$
(the average degree of $G$) is at least
$n(0.410116\ldots)$.  Critically, that is greater than $2n/5$.  In a
graph that is regular, or close to regular in the sense that the difference
between minimum and maximum degrees is bounded by some constant, the
minimum degree approaches the average and so
is also greater than $2n/5$ for sufficiently large $n$.
But any triangle-free graph with minimum degree greater than $2n/5$
is bipartite~\cite{Andrasfai:G18,Andrasfai:Connection},
giving the following corollary.

\begin{corollary}\label{cor:near-regular}
Let $k$ be any fixed nonnegative integer and let $G$ be any cycle-maximal
triangle-free graph with $n$ vertices and $\Delta(G)-\delta(G)\le k$.
Then for sufficiently large $n$, $G$ is the bipartite Tur\'{a}n graph.
\end{corollary}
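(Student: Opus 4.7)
The plan is to show that for sufficiently large $n$, any $G$ satisfying the hypotheses must have minimum degree strictly greater than $2n/5$, at which point the Andrásfai--Erdős--Sós result forces $G$ to be bipartite, and then Lemma~\ref{lem:equivalent-props} identifies $G$ with $T(n,2)$.

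First I would compare the lower bound \eqref{eqn:ktwo-numerical} on $\ln c(T(n,2))$ with the upper bound \eqref{eqn:edges-stirling} on $\ln c(G)$. Since $G$ is cycle-maximal triangle-free, $c(G) \ge c(T(n,2))$, so in particular the leading coefficients in $n$ must satisfy $\alpha - \ln \alpha \le 1 + \ln 2$, where $\alpha = 1 - \sqrt{1 - 1/n - 2(m+1)/n^2}$ and $m = |E(G)|$. The function $x \mapsto x - \ln x$ is decreasing on $(0,1)$, so this inequality forces $\alpha \ge \alpha_0$ for the unique root $\alpha_0 \approx 0.231961$ of $\alpha - \ln\alpha = 1+\ln 2$ in $(0,1)$. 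Translating back through the definition of $\alpha$, this gives $2m/n \ge \beta_0 n$ for some constant $\beta_0 > 2/5$ (numerically $\beta_0 \approx 0.41011$), once $n$ is large enough that the lower-order $O(\ln n)$ error terms in the two bounds are dominated by the gap $(\beta_0 - 2/5)n$ in the leading term.

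Next I would convert this average-degree bound into a minimum-degree bound. Since $\sum_{v} \deg(v) = 2m \ge \beta_0 n^2$ and $\Delta(G) - \delta(G) \le k$, we have $\delta(G) \ge 2m/n - k \ge \beta_0 n - k$. For all $n$ large enough that $\beta_0 n - k > 2n/5$, i.e. $n > k/(\beta_0 - 2/5)$, we obtain $\delta(G) > 2n/5$. By the Andr\'asfai--Erd\H{o}s--S\'os theorem cited in Section~\ref{sec:props} (the case $i=2$ of Jin's hierarchy), any triangle-free graph with minimum degree greater than $2n/5$ is bipartite. Hence $G$ is bipartite, and Lemma~\ref{lem:equivalent-props} (the implication \ref{itm:prop-bipartite}$\Rightarrow$\ref{itm:prop-turan}) identifies $G$ as $T(n,2)$.

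The main obstacle is purely bookkeeping: one has to verify that the $O(\ln n)$ and $O(1/n)$ lower-order terms on both sides of the bound comparison do not swamp the leading-order argument until $n$ is in some explicit (but here unspecified) range. Since the statement only claims the result for \emph{sufficiently large} $n$, it suffices to note that the gap in the coefficient of $n$ between $\alpha - \ln \alpha$ and $1 + \ln 2$ is strictly positive whenever $\alpha < \alpha_0$, so the sub-leading terms can only shift the threshold $n$ by a $k$-dependent constant and do not affect the conclusion.
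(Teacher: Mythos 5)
Your proposal is correct and follows essentially the same route as the paper: the paper's argument for this corollary is exactly the comparison of \eqref{eqn:ktwo-numerical} with \eqref{eqn:edges-stirling}, yielding $\alpha-\ln\alpha\le 1+\ln 2$, hence $\alpha\ge 0.231961\ldots$ and average degree at least $0.410116\ldots n > 2n/5$, after which the bounded gap $\Delta(G)-\delta(G)\le k$ pulls the minimum degree above $2n/5$ for large $n$ and the Andr\'asfai--Erd\H{o}s--S\'os theorem forces bipartiteness. Your added details (monotonicity of $x-\ln x$ on $(0,1)$, the explicit estimate $\delta(G)\ge 2m/n-k$, and the appeal to Lemma~\ref{lem:equivalent-props} to conclude $G=T(n,2)$) only make explicit what the paper leaves implicit.
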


In particular, note that $C_5(t)$, which is an important case for many
previous results on maximal triangle-free graphs including that of
Andr{\'a}sfai used above~\cite{Andrasfai:G18,Andrasfai:Connection}, is
regular with degree exactly $2n/5$ and so is \emph{not} cycle-maximal
triangle-free once $n$ is sufficiently large.  It does not have enough edges
to be cycle-maximal triangle-free.  Neither does any other non-bipartite
regular graph for sufficiently large $n$.  Later in the present work, when
we show that no regular graph is a counterexample to
Conjecture~\ref{con:main}, we need only consider the finite number of cases
in which $n$ is not ``sufficiently large.''

However, this result concerns average degree, not minimum degree.  A graph
could have a large gap between average and minimum degrees.  For instance,
the graph formed by inserting a degree-two vertex in one edge of $T(n-1,2)$
has average degree approaching $n/2$ despite its minimum degree being fixed
at $2$; and although it clearly has fewer cycles than $T(n,2)$,
Lemma~\ref{lem:edge-bound} is not strong enough to prove that.  Note that by
a result of Erd\H{o}s~\cite[Lemma~1]{Erdos:Rademacher}, this graph also
contains the maximum possible number of edges for a non-bipartite
triangle-free graph on $n$ vertices.

%%%%%%%%%%%%%%%%%%%%%%%%%%%%%%%%%%%%%%%%%%%%%%%%%%%%%%%%%%%%%%%%%%%%%%%%

\subsection{Cycle bounds from homomorphisms}

Several important results on maximal triangle-free graphs amount to proving
that a graph $G$ with certain properties is necessarily homomorphic to some
fixed, usually small, graph $H$.  The following lemmas provide bounds on
the number of cycles in a graph with that kind of homomorphism; first for
$G$ a uniform blowup of $H$, and then more generally where the sizes of the
sets mapping onto each vertex of $H$ are known but not necessarily all the
same.

\begin{lemma}\label{lem:hmorph-bound}
If $G$ and $H$ are graphs with $n$ and $p$ vertices respectively, $n$
an integer multiple of $p$, $G$ is a subgraph of $H(n/p)$, $g$ is the girth
of $G$, and $q=\Delta(H)$, then
\begin{gather}
  c(G) \le q^n \left[ \left( \frac{n}{p} \right) ! \right]^p
            \frac{n}{2g} \label{eqn:hmorph-qn}
    \, , \text{ and} \\
  \ln c(G) \le
    n \ln n
    - \left( 1 + \ln \frac{p}{q} \right) n
    + \left( 1 + \frac{p}{2}\right) \ln n
    + \frac{p}{2}\ln \frac{2\pi}{p}
    - \ln 2g
    + \frac{p^2}{12n}
    \, .
    \label{eqn:hmorph-stirling}
\end{gather}
\end{lemma}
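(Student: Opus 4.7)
The plan is to count ordered traversals of cycles in $G$ and divide by an overcounting factor. Consider ordered tuples $(v_1, v_2, \ldots, v_\ell)$ of distinct vertices of $G$ such that $(v_i, v_{i+1}) \in E(G)$ for all $i$ (with indices taken modulo $\ell$); each cycle of length $\ell$ corresponds to exactly $2\ell$ such tuples. Since every cycle in $G$ has length at least $g$, the total tuple count is at least $2g \cdot c(G)$, so it suffices to show that the tuple count is at most $n q^n [(n/p)!]^p$.

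The main step exploits the blowup structure. Each ordered tuple decomposes as (a) its projection to $H$, namely the block sequence $(h_1, h_2, \ldots, h_\ell)$ where $h_i$ denotes the block of $H(n/p)$ containing $v_i$, and (b) the vertex assignment within each block. Because $G \subseteq H(n/p)$, consecutive blocks $h_i, h_{i+1}$ (and $h_\ell, h_1$) must be adjacent in $H$, so the projection is a closed walk in $H$. Starting from a fixed block, such a walk has at most $q^{\ell-1}$ possibilities (each of the $\ell - 1$ subsequent positions admits at most $q = \Delta(H)$ block choices), giving at most $p q^{\ell-1}$ closed walks of length $\ell$ in total. For a walk visiting block $h$ exactly $n_h$ times, the number of vertex assignments is $\prod_h (n/p)!/(n/p - n_h)! \le [(n/p)!]^p$, bounding each factor by $(n/p)!$. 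Multiplying the two factors and summing over $\ell \in \{g, g+1, \ldots, n\}$, then absorbing the resulting geometric-style sum using $p \le n$, yields a total tuple count at most $n q^n [(n/p)!]^p$; dividing by $2g$ establishes \eqref{eqn:hmorph-qn}.

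For \eqref{eqn:hmorph-stirling}, I take logarithms of \eqref{eqn:hmorph-qn}, obtaining $n \ln q + p \ln((n/p)!) + \ln n - \ln 2g$, and apply the Stirling upper bound \eqref{eqn:stirling-ub} to $\ln((n/p)!)$. Rewriting $n \ln(n/p) + n \ln q = n \ln n - n \ln(p/q)$ and collecting the $(1 + p/2)\ln n$, $(p/2)\ln(2\pi/p)$, and $p^2/(12n)$ terms reproduces the stated coefficients. The main obstacle is the careful bookkeeping for the sum $\sum_{\ell=g}^n p q^{\ell-1}$, which must be converted cleanly into a factor of $n q^n$ via $p \le n$ and elementary geometric-series bounds (with a trivial length-counting bound in the degenerate case $q = 1$); once this is in hand, the Stirling substitution is a routine calculation.
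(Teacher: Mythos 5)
Your proposal is correct and takes essentially the same approach as the paper: both arguments count oriented rooted cycles by factoring them into successor choices in $H$ (contributing powers of $q=\Delta(H)$) and orderings of the $n/p$ vertices within each block (contributing $[(n/p)!]^p$), then divide by $2g$ for the $\ge g$ starting points and $2$ directions. The only difference is bookkeeping: the paper assigns a successor-in-$H$ to all $n$ vertices of $G$ simultaneously, obtaining the factor $q^n$ outright and a clean factor of $n$ for the choice of last vertex, which avoids your sum $\sum_{\ell} p\,q^{\ell-1}$ over cycle lengths and hence the degenerate case $q=1$ that your geometric-series absorption must treat separately.
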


\begin{proof}
For each vertex in $G$, we will choose a successor in $H$.  There are at
most $q^n$ ways to do that.  By also choosing a permutation for
the $n/p$ vertices in $G$ corresponding to each of the $p$ vertices of $H$
(overall $(n/p)!^p$ choices), we can uniquely determine a successor for each
vertex in the cycle.
Note that we can choose any arbitrary
successors for vertices not in the cycle, since we have not limited the
total number of times we might choose a vertex of $H$; special handling of
non-cycle vertices as in Lemma~\ref{lem:perm-bound} is not necessary here.

The starting vertex is determined by choosing one of the $p$ partitions.  To
determine the length of the cycle, bearing in mind that the cycle can only
end when it returns to its initial partition, we can choose how many of the
$n/p$ vertices in the initial partition to include in the cycle. 
Multiplying those factors, the $p$ cancels out, leaving a factor of $n$ for
the choice of both starting vertex and cycle length.  Alternately, this
choice can be viewed as selecting from among $n$ vertices one to be the
\emph{last} vertex in the cycle, with the starting partition implicitly the
partition containing that vertex, and the starting vertex implicitly the
first one in the starting partition according to the earlier-counted vertex
permutations.  We can also remove a factor of $2g$ because any cycle
(necessarily of length at least $g$) can be described using any of $2$
directions and at least $g$ starting vertices.  Multiplying all these
factors gives \eqref{eqn:hmorph-qn}.

Then
\eqref{eqn:hmorph-stirling} follows by Stirling's approximation as follows:
\begin{align*}
\ln c(G) &\le n \ln q + p\left[ \frac{n}{p} \ln \frac{n}{p} - \frac{n}{p}
    +\frac{1}{2}\ln \frac{n}{p} + \frac{1}{2} \ln 2\pi +\frac{p}{12n}\right]
    + \ln \frac{n}{2g} \\
  &= n \ln q + n \ln \frac{n}{p} - n + \frac{p}{2} \ln \frac{n}{p}
    +\frac{p}{2} \ln 2\pi + \frac{p^2}{12n} + \ln \frac{n}{2g} \\
  &= n \ln n
    - \left( 1 + \ln \frac{p}{q} \right) n
    + \left( 1 + \frac{p}{2}\right) \ln n
    + \frac{p}{2}\ln \frac{2\pi}{p}
    - \ln 2g
    + \frac{p^2}{12n}
    \, . \qedhere
\end{align*}
\end{proof}

%%%%%%%%%%%%%%%%%%%%%%%%%%%%%%%%%%%%%%%%%%%%%%%%%%%%%%%%%%%%%%%%%%%%%%%%

Lemma~\ref{lem:hmorph-bound} can potentially overcount by a significant
margin because of the $q^n$ term, which allows each vertex of $G$ to choose
a successor in $H$ without restriction.  A Hamiltonian cycle in $G$ would
necessarily visit each vertex of $H$ exactly $n/p$ times, not any arbitrary
number of times; many of the $q^n$ successor-in-$H$ choices involve choosing
a vertex of $H$ more than $n/p$ times and so cannot actually correspond to
feasible full-length cycles in $G$.  There are many fewer than $q^n$ ways to
choose each vertex of $H$ exactly $n/p$ times while obeying the other
applicable constraints.  The situation is complicated somewhat by the
possibility of non-Hamiltonian cycles, but
it remains that the bound \eqref{eqn:hmorph-qn} is quite loose for many
graphs of interest.

The matrix permanent offers a way to prove a tighter upper bound on cycles
given a homomorphism.
The following result replaces the successor choice in
Lemma~\ref{lem:hmorph-bound} with a computation of the permanent of the
adjacency matrix of the graph.  Choosing a permutation of the rows and
columns for which all the chosen entries of the adjacency matrix are nonzero
corresponds to choosing a neighbour as successor for each vertex in the
graph such that each vertex is chosen exactly once, and the permanent counts
such choices, including all Hamiltonian cycles.  To allow for
non-Hamiltonian cycles, which might not involve all vertices, we add loops
to all the vertices, corresponding to ones along the diagonal of the matrix,
allowing any vertex to choose itself as successor and therefore not need to
be chosen by any other vertex.  The result is a simple upper bound on number
of cycles.  This approach is also more easily applicable to non-uniform
blowups; that is, where different vertices in $H$ do not all correspond to
the same size of independent sets in $G$.  We will discuss later how to
compute the permanent efficiently for the cases of interest here.

\begin{lemma}\label{lem:perm-bound}
In a graph $G$ with $n$ vertices whose adjacency matrix is $(g_{ij})$,
\begin{equation}
  c(G) \le \frac{1}{2}\perm \left( (g_{ij}) + I_n \right ) \, .
  \label{eqn:perm-bound}
\end{equation}
Furthermore, if $G$ is homomorphic to a
graph $H$ with $p$ vertices labelled $1\ldots p$ and adjacency matrix
$(h_{ij})$, via
a homomorphism $f:V(G) \rightarrow V(H)$ that maps $n_i=|f^{-1}(i)|$
vertices of $G$ to each vertex $i$ of $H$, then
\begin{equation}
  c(G) \le \frac{1}{2}\perm
    \begin{pmatrix}
      I_{n_1} & h_{12}J_{n_1n_2}          & \ldots & h_{1p}J_{n_1n_p} \\
      h_{21}J_{n_2n_1} & I_{n_2}          & \ldots & h_{2p}J_{n_2n_p} \\
      \vdots & \vdots &                     \ddots & \vdots \\
      h_{p1}J_{n_pn_1} & h_{p2}J_{n_pn_2} & \ldots & I_{n_p}
    \end{pmatrix} \label{eqn:perm-ibound} \, .
\end{equation}

\end{lemma}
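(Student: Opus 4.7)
The plan is to prove the two inequalities in sequence, starting with the general bound \eqref{eqn:perm-bound} and deriving \eqref{eqn:perm-ibound} as a consequence using monotonicity of the permanent on nonnegative matrices.

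For \eqref{eqn:perm-bound}, recall that $\perm((g_{ij}) + I_n)$ counts permutations $\sigma$ of $V(G)$ such that for every vertex $v$, either $\sigma(v) = v$ or $\{v,\sigma(v)\} \in E(G)$: each off-diagonal entry used is an edge and each diagonal entry used is a $1$ from $I_n$.  To each pair (cycle $C$ of $G$, orientation of $C$) I would associate the permutation $\sigma_C$ that sends each vertex of $C$ to its successor along the chosen orientation and fixes every vertex outside $V(C)$.  Each such $\sigma_C$ is a valid contributor to the permanent.  I would then verify injectivity of this assignment: the non-fixed points of $\sigma_C$ recover $V(C)$, and the restriction of $\sigma_C$ to that set recovers the oriented cycle.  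Since each undirected cycle of length at least $3$ admits exactly two orientations and distinct (cycle, orientation) pairs yield distinct permutations, we obtain $2c(G) \le \perm((g_{ij}) + I_n)$, which is \eqref{eqn:perm-bound}.

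For \eqref{eqn:perm-ibound}, the homomorphism $f$ realizes $G$ as a subgraph of the blowup $H(n_1,\ldots,n_p)$.  After reordering the vertices of $G$ so that the preimage $f^{-1}(i)$ forms a contiguous block of size $n_i$, the adjacency matrix of $G$ has zero diagonal blocks and, in each off-diagonal block $(i,j)$, a $\{0,1\}$-matrix whose entries are dominated by $h_{ij}J_{n_i n_j}$.  Adding $I_n$ produces a matrix dominated entry-wise by the matrix displayed in \eqref{eqn:perm-ibound}.  The permanent of a nonnegative matrix is monotone nondecreasing in its entries (it is a sum of products of entries), so chaining this monotonicity with \eqref{eqn:perm-bound} yields \eqref{eqn:perm-ibound}.

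I do not expect a genuine obstacle; the proof is essentially bookkeeping.  The one subtlety worth flagging is that the bound is genuinely an inequality, not an equality: permutations with two or more disjoint nontrivial cycles of length at least $3$, or with $2$-cycles arising from pairs of anti-parallel edges, contribute to the permanent without corresponding to any single cycle of $G$.  This looseness is the price paid for replacing an exact cycle count with a quantity amenable to efficient computation, and it is what one must later control when using the bound quantitatively.
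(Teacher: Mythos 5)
Your proof is correct and takes essentially the same approach as the paper's: the paper phrases the first bound in terms of directed cycle covers of the loop-augmented graph, which is exactly your count of permutations $\sigma$ with $\sigma(v)=v$ or $\{v,\sigma(v)\}\in E(G)$, together with the same two-orientations-per-cycle injection. For the second bound the paper adds all edges permitted by the homomorphism and reapplies the first bound to the resulting maximal graph, while you instead invoke entrywise monotonicity of the permanent; these are interchangeable steps and your version is equally valid.
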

\begin{proof}
A directed cycle cover, or oriented 2-factor, of $G$ is a choice, for each
vertex $v$ in $G$, of a successor vertex adjacent to $v$ such that each
vertex is chosen exactly once.  If we add a loop to every vertex of $G$
(making each vertex adjacent to itself), then every cycle in $G$ is uniquely
determined by at least two directed cycle covers of the resulting graph:
namely those in which the cycle vertices choose their successors in the
cycle, going around the cycle in either direction, and any other vertices
choose themselves.  The permanent of $\left( (g_{ij}) + I_n \right )$ counts
exactly those directed cycle covers, and dividing it by two for the two
directions gives \eqref{eqn:perm-bound}.

When $G$ is homomorphic to $H$, we can assume for an upper bound that
$G$ contains all edges allowed
by the homomorphism; adding edges does not decrease the number of cycles.
Then
\eqref{eqn:perm-ibound} is just \eqref{eqn:perm-bound} applied to the
maximal graph.
\end{proof}

% $Id: regular.tex 3431 2014-10-28 08:08:19Z mskala $

\section{Cycles in regular triangle-free graphs}
\label{sec:regular}

By Corollary~\ref{cor:near-regular}, no regular graph with $n$ vertices
except $T(n,2)$ can be cycle-maximal triangle-free for $n$ sufficiently
large.  In this section we show that in fact that statement applies to all
$n$.

Recall that a maximal
triangle-free graph with $n$ vertices and minimum degree greater than
$10n/29$ is homomorphic to some $\Gamma_i$ with $i\le 9$.  If the graph
is also regular, the following lemma narrows the possibilities further.

\begin{lemma}\label{lem:gamma-blowup}
An $n$-vertex regular maximal triangle-free graph $G$ homomorphic to some
$\Gamma_i$ is exactly $\Gamma_j(n/(3j-1))$ for some $j \le i$.
\end{lemma}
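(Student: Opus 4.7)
The plan is to fix a homomorphism $f: G \to \Gamma_i$ and then ``tighten'' it as much as possible, eventually extracting a uniform blowup structure from the combination of regularity and the circulant symmetry of $\Gamma_j$. So first I would let $j$ be the least positive integer such that $G$ is homomorphic to $\Gamma_j$; certainly $j \le i$. Fix a homomorphism $f : V(G) \to V(\Gamma_j)$ with vertex set $v_1,\dots,v_{3j-1}$ of $\Gamma_j$, and let $V_k = f^{-1}(v_k)$ and $n_k = |V_k|$.

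Next, I would argue that $f$ is surjective. If some $V_k$ were empty, then $f$ would in fact map $G$ into $\Gamma_j$ with the vertex $v_k$ deleted; composing with the homomorphism $\Gamma_j - v_k \to \Gamma_{j-1}$ supplied by Lemma~\ref{lem:gamma-deleted} would yield a homomorphism $G \to \Gamma_{j-1}$, contradicting the minimality of $j$. (The degenerate case $j=1$ is handled separately: a regular maximal triangle-free graph cannot be edgeless, so $f$ cannot collapse to a single vertex.) With every $V_k$ nonempty, each $V_k$ is an independent set in $G$, and $G$ is a subgraph of the blowup $\Gamma_j(n_1,\dots,n_{3j-1})$, which is triangle-free. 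Since adding any edge between $V_k$ and $V_\ell$ with $v_k v_\ell \in E(\Gamma_j)$ keeps the graph triangle-free, maximality forces every such edge to be present, so $G$ equals the non-uniform blowup $\Gamma_j(n_1,\dots,n_{3j-1})$.

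Then I would use regularity to force the $n_k$ to coincide. Each vertex of $V_k$ has degree exactly $\sum_{\ell : v_\ell \sim v_k} n_\ell$; regularity gives a common value $d$, so
\begin{equation*}
  n_{k+j} + n_{k+j+1} + \cdots + n_{k+2j-1} = d \quad \text{for all } k \pmod{3j-1},
\end{equation*}
using the circulant description of $\Gamma_j$ from Subsection~\ref{sub:definitions}. Subtracting consecutive instances of this identity gives $n_{k+j} = n_{k+2j}$, i.e.\ $n_\ell = n_{\ell+j}$ for every index $\ell$ modulo $3j-1$. Since $\gcd(j,3j-1) = \gcd(j,-1) = 1$, iterating by $j$ visits every residue class, so all $n_\ell$ are equal to the common value $t = n/(3j-1)$, giving $G = \Gamma_j(t)$.

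I do not anticipate a major obstacle: the maximality hypothesis lets us upgrade a subgraph-of-a-blowup into the full blowup for free, Lemma~\ref{lem:gamma-deleted} provides the minimality step, and the only mildly delicate point is the arithmetic fact $\gcd(j,3j-1) = 1$, which is what allows regularity of $G$ alone (rather than vertex-transitivity) to propagate to equality of all the part sizes.
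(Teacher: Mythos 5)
Your proposal is correct and follows essentially the same route as the paper's proof: edge-maximality upgrades $G$ to the full blowup $\Gamma_j(n_1,\dots,n_{3j-1})$, Lemma~\ref{lem:gamma-deleted} disposes of empty parts (you phrase this as minimality of $j$, the paper as an induction, which is the same thing), and regularity plus the circulant adjacency yields $n_\ell = n_{\ell+j}$, whence all parts are equal since $\gcd(j,3j-1)=1$. Your explicit gcd computation is in fact slightly cleaner than the paper's remark that ``$j$ does not divide $3j-1$.''
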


\begin{proof}
Edge-maximality implies $G$ is exactly $\Gamma_i$ with all vertices replaced
by independent sets and all the edges that are allowed by the homomorphism;
that is, $\Gamma_i(n_1,n_2,\ldots,n_p)$ with $p=3i-1$.
Suppose one of those independent sets is empty; then some $n_k=0$ and $G$ is
homomorphic to $\Gamma_i$ minus one vertex.
But by Lemma~\ref{lem:gamma-deleted}, deleting a vertex from $\Gamma_i$
leaves a graph homomorphic to $\Gamma_{i-1}$.  By transitivity $G$ is
homomorphic to $\Gamma_{i-1}$, and by induction there exists $j\le i$
such that $G=\Gamma_j(n_1,n_2,\ldots,n_{3j-1})$ with all the $n_k>0$.

The neighbourhoods of $v_{2j}$ and $v_{2j+1}$ in $\Gamma_j$ are
$\{v_1,v_2,\ldots,v_j\}$ and $\{v_2,v_3,$ $\ldots,$
$v_{j+1}\}$ respectively;
these differ only by the substitution of $v_{j+1}$ for $v_1$.  If $G$ is
regular, we have
\begin{equation*}
  \sum_{k=1}^{j} n_k = \sum_{k=2}^{j+1} n_k \quad\text{and}\quad
  n_1 = n_{j+1} \, .
\end{equation*}
Symmetrically around the cycle, $n_k=n_{j+k}$ for all $k$, taking the
subscripts modulo $3j-1$.  Because $j$ does not divide $3j-1$, these
equalities form a Hamiltonian cycle covering all the vertices of
$\Gamma_j$.  Then all the $n_k$ are equal, and $G=\Gamma_j(n/(3j-1))$.
\end{proof}

Figure~\ref{fig:cases-graphical} summarizes the regular graphs of interest
according to number of vertices and regular degree.  The horizontal line at
$2m/n^2=2/5$ represents the known result that minimum degree greater than
$2n/5$ in a triangle-free graph implies the graph is bipartite;
anything strictly above that line is bipartite.  On or below that line,
but above the horizontal line at $2m/n^2=10/29$, Lemma~\ref{lem:gamma-blowup}
implies only symmetric blowups of $\Gamma_i$ graphs (denoted by circles in
the figure) could be regular counterexamples to Conjecture~\ref{con:main}. 
And Corollary~\ref{cor:near-regular} implies that the curve labelled
``\eqref{eqn:ktwo-numerical} and \eqref{eqn:edges-stirling}''
eventually crosses (and then permanently remains above) the line at
$2m/n^2=2/5$, somewhere to the right of the region shown; it is approaching
an asymptote at $2m/n^2\approx 0.41 > 2/5$, and therefore the number of
$\Gamma_i(t)$ to consider is finite.

\begin{figure}
% x coordinate = n/10
% y coordinate = (beta-0.33)*200
\centering\begin{tikzpicture}[>=latex']
  % axes and axis labels
  \draw[thick,bigah] (-0.5,0) -- (10.5,0);
  \draw[thick,bigah] (0,-0.5) -- (0,14.5);
  \node at (-1,11) {$\displaystyle \frac{2m}{n^2}$};
  \node at (5,-1) {$n$};
%
  % vertical axis ticks and labels
  \foreach \y in {14,9,6.727273,5.428571,4.588235,4,3.565217,%
     3.230769,2.965517,0.666667}
    {\draw (-0.15,\y) -- (0,\y);}
  \node[anchor=east] at (-0.2,14)
    {\small $2/5$};
  \node[anchor=east] at (-0.2,9)
    {\small $3/8$};
  \node[anchor=east] at (-0.2,6.727273)
    {\small $4/11$};
  \node[anchor=east] at (-0.2,5.428571)
    {\small $5/14$};
  \node[anchor=east] at (-0.2,4.588235)
    {\small $6/17$};
  \node[anchor=east] at (-0.2,4)
    {\small $7/20$};
  \node[anchor=east] at (-0.2,3.565217)
    {\small $8/23$};
  \node[anchor=east] at (-0.2,3.230769)
    {\small $9/26$};
  \node[anchor=east] at (-0.2,2.965517)
    {\small $10/29$};
  \node[anchor=east] at (-0.2,0.666667)
    {\small $1/3$};
%
  % horizontal axis ticks and labels
  \foreach \x in {10,20,...,100}
    {\draw ($(\x/10,0)$) -- ($(\x/10,-0.15)$);
     \node[anchor=north] at ($(\x/10,-0.2)$) {\small \x};}
%
  % horizontal lines at 2/5, 10/29, and 1/3, chromaticity labels
  \draw[densely dashed] (0,14)--(10.5,14);
  \draw[densely dashed] (0,2.965517)--(10.5,2.965517);
  \draw[densely dashed] (0,0.666667)--(10.5,0.666667);
  \node[anchor=south east] at (10,14) {$\uparrow\chi=2$};
  \node[anchor=south east] at (10,2.965517) {$\uparrow\chi\le3$};
  \node[anchor=south east] at (10,0.666667) {$\uparrow\chi\le4$};
%
  % edge-counting bound with Stirling's approximation
  \draw[densely dashed] ($({44.8985/10},{(0.33003-0.33)*200})$)
  \foreach \x/\y in {
    46.3706/0.331732, 
    47.9176/0.333433, 
    49.6439/0.335235, 
    51.3645/0.336937, 
    53.2884/0.338739, 
    55.2102/0.34044, 
    57.1196/0.342042, 
    59.2636/0.343744, 
    61.5363/0.345445, 
    63.9494/0.347147, 
    66.3593/0.348749, 
    68.9164/0.35035, 
    71.8089/0.352052, 
    74.6465/0.353619, 
    77.607/0.355155, 
    80.8987/0.356757, 
    83.967/0.358158, 
    87.468/0.35966, 
    91.211/0.361161, 
    95.1515/0.362638, 
    99.2287/0.364064, 
    100.101/0.364357 
  } { -- ($({\x/10},{(\y-0.33)*200})$) };
  \node[anchor=south] at (9,7.5)
    {\eqref{eqn:ktwo-numerical} and \eqref{eqn:edges-stirling}};
  \draw[bigah] (9,7.5) -- ($({94.831/10},{(0.362525-0.33)*200})$);
%
  % edge-counting bound with exact pi function
  \draw[densely dotted] ($({14/10},{(0.336735-0.33)*200})$)
  \foreach \x/\y in {
    15/0.328889,
    16/0.343750,
    17/0.339100,
    18/0.345679,
    19/0.343490,
    20/0.350000,
    21/0.344671,
    22/0.351240,
    23/0.347826,
    24/0.354167,
    25/0.348800,
    26/0.357988,
    27/0.353909,
    28/0.359694,
    29/0.354340,
    30/0.360000,
    31/0.357960,
    32/0.363281,
    33/0.359963,
    34/0.363322,
    35/0.360816,
    36/0.365741,
    37/0.363769,
    38/0.367036,
    39/0.364234,
    40/0.368750,
    41/0.366449,
    42/0.369615,
    43/0.366685,
    44/0.370868,
    45/0.368395,
    46/0.372401,
    47/0.369398,
    48/0.373264,
    49/0.370679,
    50/0.374400,
    51/0.371396,
    52/0.375000,
    53/0.372375,
    54/0.375857,
    55/0.373554,
    56/0.376276,
    57/0.374269,
    58/0.377527,
    59/0.375180,
    60/0.377778,
    61/0.375705,
    62/0.378772,
    63/0.376921,
    64/0.379395,
    65/0.377278,
    66/0.379706,
    67/0.377812,
    68/0.380623,
    69/0.378492,
    70/0.380816,
    71/0.379290,
    72/0.381559,
    73/0.379809,
    74/0.382031,
    75/0.380444,
    76/0.382618,
    77/0.380840,
    78/0.382972,
    79/0.381349,
    80/0.383437,
    81/0.381954,
    82/0.383998,
    83/0.382349,
    84/0.384354,
    85/0.382837,
    86/0.384803,
    87/0.383142,
    88/0.385072,
    89/0.383790,
    90/0.385679
  } { -- ($({\x/10},{(\y-0.33)*200})$) };
  \node[anchor=south] at (6.3,12)
    {\eqref{eqn:ktwo-exact} and \eqref{eqn:edges-pifunc}};
  \draw[bigah] (6.3,12) -- ($({70/10},{(0.380816-0.33)*200})$);
%
  % homomorphism bound with Stirling's approximation
  \draw[densely dashed] ($({100.707/10},{(0.350036-0.33)*200})$)
  \foreach \x/\y in {
    93.6566/0.351625,
    89.4778/0.352714,
    84.4704/0.354221,
    79.5556/0.355971,
    75.2655/0.357739,
    70.2393/0.360251,
    66.156/0.362764,
    62.7361/0.365276,
    59.9181/0.367789,
    57.5493/0.370302,
    55.5184/0.372814,
    53.83/0.375327,
    52.4245/0.377839,
    51.3535/0.380128,
    50.2256/0.382864,
    49.3919/0.385377,
    48.7269/0.387889,
    48.2133/0.390402,
    47.8379/0.392915,
    47.5908/0.395427,
    47.4647/0.39794,
    47.4548/0.400452,
    47.558/0.402965
  } { -- ($({\x/10},{(\y-0.33)*200})$) };
  \node[anchor=south] at (8,9)
    {\eqref{eqn:ktwo-numerical} and \eqref{eqn:hmorph-stirling}};
  \draw[bigah] (8,9) -- ($({66.156/10},{(0.362764-0.33)*200})$);
%
  % gamma graph cases
  \foreach \x in {5,10,...,45}
    {\draw[fill=white] (\x/10,14) circle[radius=0.1];}
  \foreach \x in {8,16,...,48}
    {\draw[fill=white] (\x/10,9) circle[radius=0.1];}
  \foreach \x in {11,22,...,55}
    {\draw[fill=white] (\x/10,6.727273) circle[radius=0.1];}
  \foreach \x in {14,28,...,70}
    {\draw[fill=white] (\x/10,5.428571) circle[radius=0.1];}
  \foreach \x in {17,34,...,68}
    {\draw[fill=white] (\x/10,4.588235) circle[radius=0.1];}
  \foreach \x in {20,40,60}
    {\draw[fill=white] (\x/10,4) circle[radius=0.1];}
  \foreach \x in {23,46}
    {\draw[fill=white] (\x/10,3.565217) circle[radius=0.1];}
  \foreach \x in {26,52}
    {\draw[fill=white] (\x/10,3.230769) circle[radius=0.1];}
%
  % parametric search results for four-colorable graphs
%  \foreach \x/\m in {29/10,32/11,35/12,38/13,41/14,44/15,47/16,%
%50/17,53/18,58/20}
%    {\draw[fill=white] ($(\x/10-0.1,{((\m/\x)-0.32)*200-2.1})$)
%             rectangle ($(\x/10+0.1,{((\m/\x)-0.32)*200-1.9})$);}
%
  % parametric search results for any-colorable graphs
%  \draw[fill=white] (1.1,0.666667) -- (1.2,0.766667) --
%    (1.3,0.666667) -- (1.2,0.566667) --cycle;
%  \draw[fill=white] (1.7,0.666667) -- (1.8,0.766667) --
%    (1.9,0.666667) -- (1.8,0.566667) --cycle;
\end{tikzpicture}
\caption{Cases and bounds.}
\label{fig:cases-graphical}
\end{figure}
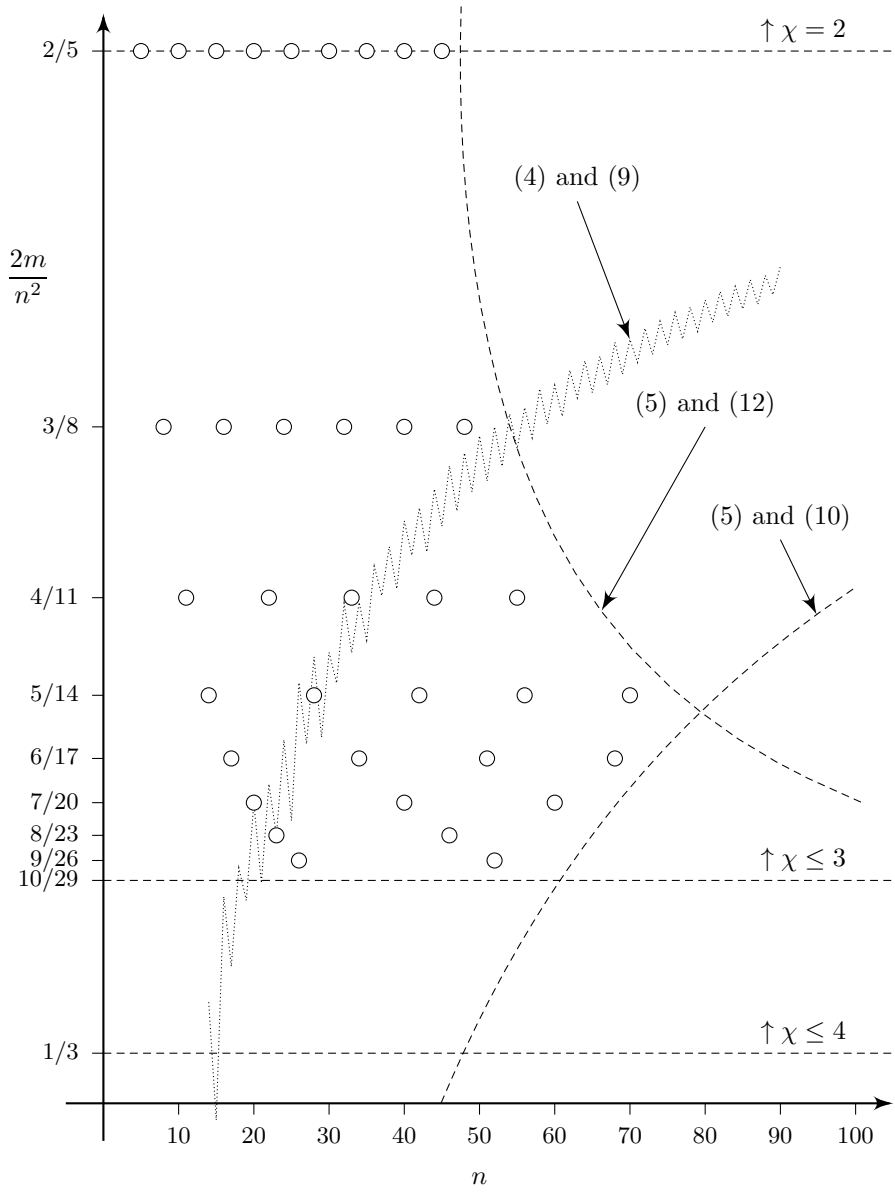

The bounds \eqref{eqn:edges-stirling} and \eqref{eqn:hmorph-stirling}
complement each other, as shown in Figure~\ref{fig:cases-graphical}; the
first works well for $\Gamma_i$ with relatively large $i$ and the second
works well with relatively small $i$.  Applying both, we can exclude all
blowups of $\Gamma_i$ for $2\le i \le 9$ except these:
$\Gamma_2(t)$ for $t\le 9$; $\Gamma_3(t)$ for $t\le 6$;
$\Gamma_4(t)$ for $t\le 5$; $\Gamma_5(t)$ for $t\le 5$;
$\Gamma_6(t)$ for $t\le 4$; $\Gamma_7(t)$ for $t\le 3$;
$\Gamma_8(t)$ for $t\le 2$; $\Gamma_9(t)$ for $t\le 2$.

By comparing \eqref{eqn:ktwo-exact} and \eqref{eqn:edges-pifunc}, which are
tighter but not closed-form versions of \eqref{eqn:edges-stirling} and
\eqref{eqn:hmorph-stirling}, we can exclude a few more.  This comparison is
shown by the zigzag dotted line in the figure; it assumes roughly the same
shape and is tending to the same asymptote as the curve for
\eqref{eqn:edges-stirling} and \eqref{eqn:hmorph-stirling}, because it comes
from the same calculation.  The zigzag pattern seems to result from parity
effects in \eqref{eqn:ktwo-exact}.  Although we conjecture that $T(n,2)$ is
cycle-maximal for both even and odd $n$, $T(n,2)$ is Hamiltonian only for
even $n$.  With odd $n$, there is always at least one vertex not included in
each cycle.  The fact that maximal-length cycles are a little shorter, and
therefore less numerous, when $n$ is odd makes $T(n,2)$ relatively poor in
cycles for odd $n$ overall, because almost all cycles are maximal-length or
very close.  The bound \eqref{eqn:edges-pifunc} has no special dependence on
parity, and so the gap between it and \eqref{eqn:ktwo-exact} tends to be
narrower for odd $n$, creating the zigzag pattern.  Using this bound allows
us to eliminate as possibilities $\Gamma_4(4)$, $\Gamma_4(5)$, all
$\Gamma_5(t)$ except $\Gamma_5$ itself, all $\Gamma_6(t)$ except $\Gamma_6$
itself, and all $\Gamma_7(t)$, $\Gamma_8(t)$, and $\Gamma_9(t)$. 
These computations, and the integer programming below, were performed
in the \eclipse\ constraint logic programming environment, which provides
easy access to backtracking search and large integer
arithmetic~\cite{Schimpf:Eclipse}.

Only 20 cases remain for maximal triangle-free graphs that are regular with
degree $>10n/29$.  All are eliminated by comparing
\eqref{eqn:ktwo-exact} with \eqref{eqn:perm-ibound} except
$\Gamma_2(1)=C_5$, which has one cycle and therefore is not cycle-maximal by
comparison with $T(5,2)=K_{2,3}$, which has three cycles.  The numerical
values for these cases are included in \ref{sec:numerics}.

At this point we have eliminated as possible counterexamples to
Conjecture~\ref{con:main} all regular graphs above the $2m/n^2=10/29$ line
in Figure~\ref{fig:cases-graphical}.  Then the comparison of
\eqref{eqn:ktwo-numerical} with \eqref{eqn:edges-stirling} eliminates all
regular graphs with $n>61$.  Any remaining regular counterexamples are
described by integers $n$ (number of vertices) and $\delta$ (regular degree)
satisfying these constraints:
\begin{equation}
  \begin{gathered}
    3 \le n \le 61 \, , \\
    2 \le \delta \le 10n/29 \, , \text{ and} \\
    m = n\delta/2 \text{ is an integer.}
  \end{gathered}
  \label{eqn:regular-intpro-csts}
\end{equation}

There are 428 pairs of $(n,\delta)$ satisfying
\eqref{eqn:regular-intpro-csts}.  All are excluded by comparing
\eqref{eqn:ktwo-exact} with \eqref{eqn:edges-pifunc}.  No more cases remain,
so the only regular graphs that can be cycle-maximal triangle-free are
of the form $T(n,2)$.  Finally, note that $T(n,2)$ is a regular
graph only when $n$ is even, so we have the following result.

\begin{theorem}
\label{thm:regular-triangle-free}
If $G$ is a regular cycle-maximal triangle-free graph with $n$ vertices,
then $n$ is even and $G$ is $K_{n/2,n/2}$.
\end{theorem}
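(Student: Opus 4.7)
The plan is to combine the structural classification of regular triangle-free graphs with the counting bounds of Section~\ref{sec:bounds} to reduce the problem to a finite case check. Since $G$ is regular, $\delta(G)=\Delta(G)=2m/n$ exactly, so Corollary~\ref{cor:near-regular} applies with $k=0$: for sufficiently large $n$, the only cycle-maximal triangle-free regular graph is $T(n,2)$. The work is therefore to make ``sufficiently large'' explicit and to dispose of all smaller $n$ by hand or by machine.

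First I would stratify by the regular degree $d=2m/n$ using the thresholds listed after Lemma~\ref{lem:gamma-deleted}. If $d>2n/5$, then $G$ is bipartite; combined with cycle-maximality (which forces maximal triangle-freeness by Lemma~\ref{lem:every-edge-cfour}), $G$ must be complete bipartite, and regularity forces $G=K_{n/2,n/2}$ with $n$ even. If $10n/29<d\le 2n/5$, Jin's theorem gives a homomorphism of $G$ into some $\Gamma_i$ with $i\le 9$, and Lemma~\ref{lem:gamma-blowup} then forces $G$ to be a uniform blowup $\Gamma_j(n/(3j-1))$ for some $j\le i$. For each such blowup, I would apply the permanent bound \eqref{eqn:perm-ibound} (or its asymptotic form \eqref{eqn:hmorph-stirling}) and compare with the exact count \eqref{eqn:ktwo-exact} for $T(n,2)$. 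The hybrid approach indicated in Figure~\ref{fig:cases-graphical} (using \eqref{eqn:edges-pifunc} for large $i$ and \eqref{eqn:perm-ibound} for small $i$) leaves only finitely many blowups $\Gamma_j(t)$ to check individually, and in fact all are eliminated except $\Gamma_2(1)=C_5$, which is dispatched by direct comparison with $K_{2,3}=T(5,2)$.

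For the remaining regime $d\le 10n/29$, I would invoke the asymptotic comparison of \eqref{eqn:ktwo-numerical} with \eqref{eqn:edges-stirling}: any counterexample needs $\alpha-\ln\alpha\le 1+\ln 2$, giving $d\ge (0.410\ldots)n$. Since this exceeds $10n/29$, the lower-order Stirling terms must be carried carefully to show that the crossover is already achieved by some explicit $n_0$; tracking the constants here yields $n_0=61$. What remains is the finite set of integer pairs $(n,d)$ satisfying \eqref{eqn:regular-intpro-csts}, namely $3\le n\le 61$, $2\le d\le 10n/29$, $nd$ even. Each of these (there are a few hundred) is then eliminated by comparing the exact bound \eqref{eqn:edges-pifunc} against the exact count \eqref{eqn:ktwo-exact}.

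The conceptual work is essentially done by the preceding lemmas; the main obstacle is bookkeeping. One must verify that the asymptotic comparisons take effect at a \emph{computable} finite threshold (so that ``sufficiently large'' in Corollary~\ref{cor:near-regular} becomes concrete), and then mechanically check several hundred residual pairs $(n,d)$ plus the residual $\Gamma_j(t)$ blowups. This finite check is naturally delegated to a constraint-logic search, as the authors do in \eclipse{}. Once every residual case has been excluded, only $T(n,2)$ remains, and since $T(n,2)$ is regular exactly when $n$ is even, the theorem follows.
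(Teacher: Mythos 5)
Your proposal follows the paper's own proof essentially step for step: the same stratification by regular degree using the $2n/5$ and $10n/29$ thresholds, the same reduction to uniform blowups $\Gamma_j(n/(3j-1))$ via Lemma~\ref{lem:gamma-blowup}, the same pairing of bounds (\eqref{eqn:edges-stirling}/\eqref{eqn:hmorph-stirling} asymptotically, then \eqref{eqn:ktwo-exact} against \eqref{eqn:edges-pifunc} and \eqref{eqn:perm-ibound} for the residual cases), the same explicit threshold $n\le 61$ with the 428 residual pairs checked by constraint search, and the same final disposal of $\Gamma_2(1)=C_5$ against $K_{2,3}$. This is correct and is the paper's argument.
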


% $Id: nearreg.tex 3431 2014-10-28 08:08:19Z mskala $

\section{Cycles in near-regular triangle-free graphs}
\label{sec:near-regular}

If the minimum and maximum degrees in a graph differ by one, we will call
the graph \emph{near-regular}.  Note that this definition is strict:  regular
graphs are not near-regular.  When the minimum degree in a near-regular
graph is at most $2n/5$, then by counting $n-1$ vertices of degree
$(2n/5)+1$ and one vertex of degree $2n/5$, the maximum possible number of
edges is
\begin{equation*}
  \frac{n^2}{5} + \frac{n-1}{2} \, .
\end{equation*}

By substituting that into \eqref{eqn:edges-stirling} and comparing with
\eqref{eqn:ktwo-numerical}, any near-regular
cycle-maximal triangle-free graph that is not $T(n,2)$
can have at most 804 vertices.

To any near-regular graph $G$ we can assign the integer variables $n$ (number
of vertices); $m$ (number of edges); $\delta$ and $\Delta$ (the lower and
higher degrees respectively); and $n_\delta$ and $n_\Delta$ (number of low
and high-degree vertices respectively).  This collection of variables
is redundant, but
naming them all explicitly makes the constraints simpler.  With
the upper bound of $804$ vertices, and comparing \eqref{eqn:ktwo-exact} with
\eqref{eqn:edges-pifunc}, the following constraints apply to any
near-regular triangle-free graph that could be a counterexample to
Conjecture~\ref{con:main}.
\begin{equation}
  \begin{gathered}
    4\le n \le 804, \\
    n=n_\delta+n_\Delta, \, n_\delta>0, \, n_\Delta>0, \\
    2\le \delta\le \frac{2n}{5}, \, \Delta=\delta+1, \\
    m=\frac{1}{2}n_\delta\delta+\frac{1}{2}n_\Delta\Delta, \text{ and} \\
    \Pi(n,m) \frac{n^2}{8} \ge
      \sum_{k=2}^{\lfloor n/2 \rfloor} \frac{\lfloor n/2 \rfloor!\lceil n/2
        \rceil!}{2k(\lfloor n/2 \rfloor-k)!(\lceil n/2 \rceil-k)!} \, .
  \end{gathered}
  \label{eqn:nearreg-csts}
\end{equation}

By computer search with \eclipse~\cite{Schimpf:Eclipse}, $n\le 435$; and we
can obtain tighter bounds on $n$ for specific classes of graphs by further
constraining the minimum degree.
\begin{itemize}
  \item If $G$ is not homomorphic to $\Gamma_2$, $\delta\le 3n/8$
    and then $n\le 91$.
  \item If $G$ is not homomorphic to $\Gamma_3$, $\delta\le 4n/11$
    and then $n\le 61$
  \item If $G$ is not homomorphic to $\Gamma_4$, $\delta\le 5n/14$
    and then $n\le 51$.
  \item If $G$ is not homomorphic to $\Gamma_5$, $\delta\le 6n/17$
    and then $n\le 51$.
  \item If $G$ is not homomorphic to $\Gamma_6$, $\delta\le 7n/20$
    and then $n\le 43$.
  \item If $G$ is not homomorphic to $\Gamma_7$, $\delta\le 8n/23$
    and then $n\le 35$.
  \item If $G$ is not 3-colourable, $\delta\le 10n/29$ and then $n\le 35$.
  \item If $G$ is not 4-colourable, $\delta\le n/3$ and then $n\le 33$.
\end{itemize}

The same kind of argument used in Lemma~\ref{lem:gamma-blowup} can be used
to show that a not necessarily uniform blowup of a $\Gamma_i$ graph which is
near-regular obeys narrow bounds on its partition sizes.  The following
lemma gives the details for the case of $\Gamma_2=C_5$.

\begin{lemma}\label{lem:nearreg-gtwo-blowup}
If a near-regular graph $G$ is maximal triangle-free, homomorphic to
$\Gamma_2$, and not bipartite, then $G=\Gamma_2(n_1,n_2,n_3,n_4,n_5)$
with $n_2\le n_1+2$, $n_3\le n_1+1$, $n_4\le n_1+1$, and $n_5\le n_1+2$; and
therefore it is a subgraph of $\Gamma_2(\lfloor (n+6)/5 \rfloor)$.
\end{lemma}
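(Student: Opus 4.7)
The plan is to first show $G = \Gamma_2(n_1,\ldots,n_5)$ with all $n_i \geq 1$, then convert the near-regularity condition into cyclic inequalities among the $n_i$, and finally derive the two claimed bounds.

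For the blowup structure, fix a homomorphism $f\colon V(G)\to V(\Gamma_2)$ and let $n_i = |f^{-1}(v_i)|$. Any two adjacent vertices of $\Gamma_2 = C_5$ have no common neighbour in $C_5$, so for any $u,v$ with $f(u)f(v)\in E(\Gamma_2)$ the addition of edge $uv$ to $G$ cannot create a triangle; maximality of $G$ then forces $uv\in E(G)$. Hence $G$ is the full blowup $\Gamma_2(n_1,\ldots,n_5)$. If any $n_k = 0$, then $G$ is homomorphic to $\Gamma_2$ with one vertex deleted, which by Lemma~\ref{lem:gamma-deleted} is homomorphic to $\Gamma_1 = K_2$ and therefore bipartite, contradicting the hypothesis; hence every $n_i\geq 1$.

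Next, a vertex in partition $i$ has degree $n_{i+2}+n_{i+3}$ (subscripts mod $5$), directly from the definition of $\Gamma_2$. Near-regularity $\Delta(G)-\delta(G)\leq 1$ gives $|d_i-d_j|\leq 1$, and the consecutive differences $d_i-d_{i+1} = n_{i+2}-n_{i+4}$ translate to the five constraints
\begin{equation*}
|n_1-n_3|,\ |n_3-n_5|,\ |n_5-n_2|,\ |n_2-n_4|,\ |n_4-n_1|\ \leq\ 1,
\end{equation*}
which form a 5-cycle in the cyclic order $(1,3,5,2,4)$. After cyclically relabelling the $\Gamma_2$ so that $n_1 = \min_i n_i$, the one-step neighbours satisfy $n_3,n_4 \leq n_1+1$, and the two-step neighbours satisfy $n_5\leq n_3+1\leq n_1+2$ and $n_2\leq n_4+1\leq n_1+2$, yielding the inequalities claimed.

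For the subgraph containment, note that any $n_i$ equal to $n_1+2$ must be $n_2$ or $n_5$; by the reflection symmetry of $C_5$ that swaps $v_2\leftrightarrow v_5$ and $v_3\leftrightarrow v_4$, it suffices to treat $n_2 = n_1+2$. Then $|n_2-n_4|\leq 1$ forces $n_4\geq n_1+1$ and $|n_5-n_2|\leq 1$ forces $n_5\geq n_1+1$, so
\begin{equation*}
n \;\geq\; n_1+(n_1+2)+n_1+(n_1+1)+(n_1+1) \;=\; 5n_1+4,
\end{equation*}
whence $n_1\leq (n-4)/5$ and $\max_i n_i \leq (n+6)/5$. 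Otherwise $\max_i n_i\leq n_1+1$ and $n\geq 5n_1$ gives $\max_i n_i\leq n/5+1\leq (n+6)/5$. Since the $n_i$ are integers, $\max_i n_i \leq \lfloor(n+6)/5\rfloor$ in all cases, and the partition-preserving inclusion of each $V_i(G)$ into the corresponding part of the uniform blowup exhibits $G$ as a subgraph of $\Gamma_2(\lfloor(n+6)/5\rfloor)$. The one subtlety is that one must use \emph{both} constraints on $n_2$ from its two neighbours $n_4$ and $n_5$ in the constraint cycle; using only one would yield $5n_1+3$ and the weaker bound $(n+7)/5$.
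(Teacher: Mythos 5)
Your proof is correct and follows the same skeleton as the paper's: establish that maximality forces $G$ to be the full blowup $\Gamma_2(n_1,\ldots,n_5)$ with all parts nonempty (non-bipartiteness ruling out empty parts via Lemma~\ref{lem:gamma-deleted}), then observe that the degree of a vertex in part $i$ is $n_{i+2}+n_{i+3}$ so that near-regularity yields the five constraints $|n_1-n_3|,|n_3-n_5|,|n_5-n_2|,|n_2-n_4|,|n_4-n_1|\le 1$ forming a cycle in the order $(1,3,5,2,4)$, and take $n_1$ minimal to get the stated inequalities. The only place you diverge is the final containment in $\Gamma_2(\lfloor(n+6)/5\rfloor)$: the paper enumerates the nine near-regular partition patterns up to symmetry and checks each one, whereas you give a uniform argument --- if some part reaches $n_1+2$ it must be $n_2$ or $n_5$, and its two neighbours in the constraint cycle are then at least $n_1+1$, forcing $n\ge 5n_1+4$ and hence $\max_i n_i\le(n+6)/5$, with the complementary case $\max_i n_i\le n_1+1$ being immediate. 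Your version is shorter and makes clear exactly which constraints are responsible for the constant $6$ (as you note, using only one neighbour of $n_2$ would give the weaker $(n+7)/5$); the paper's enumeration buys the explicit list of admissible partition vectors, which it reuses afterwards when feeding these cases into the constraint program. Both are valid.
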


\begin{proof}
If $G$ is maximal triangle-free and homomorphic to $\Gamma_2$,  then there
exist nonnegative integers $n_1,\ldots,n_5$, summing to $n$, so that
$G=\Gamma_2(n_1, \ldots, n_5)$.  If $G$ is not bipartite, then these are all
positive; and they cannot all be the same for the graph to be strictly
near-regular.  Therefore $n$ is at least $6$.

Let $v_1$, $v_2$, $v_3$, $v_4$, and $v_5$ be the vertices of $\Gamma_2$. 
Their neighbourhoods are respectively $\{v_3,v_4\}$, $\{v_4,v_5\}$,
$\{v_1,v_5\}$, $\{v_1,v_2\}$, and $\{v_2,v_3\}$.  The degree of vertices in
$G$ mapped by the homomorphism to any given vertex in $\Gamma_2$ is
equal to the sum of the sizes of sets of vertices in $G$ mapped to that
vertex's neighbours.  Therefore the following constraints hold:
\begin{gather*}
  |(n_3+n_4)-(n_4+n_5)| = |n_3-n_5| \le 1, \\
  |(n_4+n_5)-(n_1+n_5)| = |n_4-n_1| \le 1, \\
  |(n_1+n_5)-(n_1+n_2)| = |n_5-n_2| \le 1, \\
  |(n_1+n_2)-(n_2+n_3)| = |n_1-n_3| \le 1, \\
  |(n_2+n_3)-(n_3+n_4)| = |n_2-n_4| \le 1. \\
\end{gather*}
Let $n_1$ be the least of the $n_k$; then
$n_2\le n_1+2$, $n_3\le n_1+1$, $n_4\le n_1+1$, and $n_5\le n_1+2$.

Up to symmetry, there are nine cases for near-regular
$\Gamma_2(n_1,n_2,n_3,n_4,n_5)$ obeying the above constraints:
\begin{itemize}
  \item $G=\Gamma_2(n_1,n_1,n_1,n_1,n_1+1)$; then $n\equiv 1 \pmod{5}$
    and $G$ is a subgraph of $\Gamma_2(n+4)$.
  \item $G=\Gamma_2(n_1,n_1,n_1,n_1+1,n_1+1)$; then $n\equiv 2 \pmod{5}$
    and $G$ is a subgraph of $\Gamma_2(n+3)$.
  \item $G=\Gamma_2(n_1,n_1,n_1+1,n_1,n_1+1)$; then $n\equiv 2 \pmod{5}$
    and $G$ is a subgraph of $\Gamma_2(n+3)$.
  \item $G=\Gamma_2(n_1,n_1,n_1+1,n_1+1,n_1+1)$; then $n\equiv 3 \pmod{5}$
    and $G$ is a subgraph of $\Gamma_2(n+2)$.
  \item $G=\Gamma_2(n_1,n_1+1,n_1+1,n_1,n_1+1)$; then $n\equiv 3 \pmod{5}$
    and $G$ is a subgraph of $\Gamma_2(n+2)$.
  \item $G=\Gamma_2(n_1,n_1+1,n_1+1,n_1+1,n_1+1)$; then $n\equiv 4 \pmod{5}$
    and $G$ is a subgraph of $\Gamma_2(n+1)$.
  \item $G=\Gamma_2(n_1,n_1+1,n_1+1,n_1,n_1+2)$; then $n\equiv 4 \pmod{5}$
    and $G$ is a subgraph of $\Gamma_2(n+6)$.
  \item $G=\Gamma_2(n_1,n_1+1,n_1+1,n_1+1,n_1+2)$; then $n\equiv 0 \pmod{5}$
    and $G$ is a subgraph of $\Gamma_2(n+5)$.
  \item $G=\Gamma_2(n_1,n_1+2,n_1+1,n_1+1,n_1+2)$; then $n\equiv 1 \pmod{5}$
    and $G$ is a subgraph of $\Gamma_2(n+4)$.
\end{itemize}

In all these cases, $G$ is a subgraph of $\Gamma_2(\lfloor (n+6)/5
\rfloor)$.
\end{proof}

Lemma~\ref{lem:nearreg-gtwo-blowup} brings down the upper bound on $n$ a
little for the case of graphs homomorphic to $\Gamma_2$: because $G$
homomorphic to $\Gamma_2$ can have no more cycles than its supergraph
$\Gamma_2(\lfloor (n+6)/5\rfloor)$, we can compare \eqref{eqn:ktwo-exact}
for $n$ vertices with \eqref{eqn:hmorph-qn} for $5\lfloor (n+6)/5 \rfloor$
vertices, and find that for $G$ near-regular, cycle-maximal triangle-free,
and homomorphic to $\Gamma_2$ but not bipartite, $n\le 184$.

If we extend the constraint program \eqref{eqn:nearreg-csts} to include
separate variables for $n_1$, $n_2$, $n_3$, $n_4$, and $n_5$, with the
constraints on them given by Lemma~\ref{lem:nearreg-gtwo-blowup} and the new
bound $n\le 184$, we can generate an exhaustive list of the $\Gamma_2$
blowups that remain as possible counterexamples to
Conjecture~\ref{con:main}.  Comparing \eqref{eqn:ktwo-exact} with
\eqref{eqn:perm-ibound} for these cases eliminates all of them except the
three graphs shown in Figure~\ref{fig:gtwo-exceptions}: $\Gamma_2(1, 2, 1,
1, 2)$, $\Gamma_2(1, 2, 2, 1, 3)$, and $\Gamma_2(1, 3, 2, 2, 3)$.  Note that
the order of indices in $\Gamma_2$ and thus the order of indices in the
blowup notation is not consecutive around the five-cycle: $v_1$ in
$\Gamma_2$, under the definition, is adjacent to $v_3$ and $v_4$.  These
graphs are small enough that we can count the cycles exactly; none have as
many cycles as the bipartite Tur\'{a}n graph with the same number of
vertices.

\begin{figure}
  \centering
  \begin{tikzpicture}[scale=0.9]
    \begin{scope}[xshift=-5cm]
      \node[draw,circle] (a1) at (90:1.5) {~};
      \node[draw,circle] (b1) at (18:1.5) {~} edge (a1);
      \node[draw,circle] (c1) at (-54:1) {~} edge (b1);
      \node[draw,circle] (c2) at (-54:2) {~} edge (b1);
      \node[draw,circle] (d1) at (-126:1) {~} edge (c1) edge (c2);
      \node[draw,circle] (d2) at (-126:2) {~} edge (c1) edge (c2);
      \node[draw,circle] (e1) at (162:1.5) {~}
        edge (a1) edge (d1) edge (d2);
      \node at (0,-2.5) {$\Gamma_2(1, 2, 1, 1, 2)$};
    \end{scope}
    \begin{scope}
      \node[draw,circle] (a1) at (90:1.5) {~};
      \node[draw,circle] (b1) at (18:1.5) {~} edge (a1);
      \node[draw,circle] (c1) at (-54:1) {~} edge (b1);
      \node[draw,circle] (c2) at (-54:2) {~} edge (b1);
      \node[draw,circle] (d1) at (-126:1) {~} edge (c1) edge (c2);
      \node[draw,circle] (d2) at (-126:1.5) {~} edge (c1) edge (c2);
      \node[draw,circle] (d3) at (-126:2) {~} edge (c1) edge (c2);
      \node[draw,circle] (e1) at (162:1) {~}
        edge (a1) edge (d1) edge (d2) edge (d3);
      \node[draw,circle] (e2) at (162:2) {~}
        edge (a1) edge (d1) edge (d2) edge (d3);
      \node at (0,-2.5) {$\Gamma_2(1, 2, 2, 1, 3)$};
    \end{scope}
    \begin{scope}[xshift=5cm]
      \node[draw,circle] (a1) at (90:1.5) {~};
      \node[draw,circle] (b1) at (18:1) {~} edge (a1);
      \node[draw,circle] (b2) at (18:2) {~} edge (a1);
      \node[draw,circle] (c1) at (-54:1) {~} edge (b1) edge (b2);
      \node[draw,circle] (c2) at (-54:1.5) {~} edge (b1) edge (b2);
      \node[draw,circle] (c3) at (-54:2) {~} edge (b1) edge (b2);
      \node[draw,circle] (d1) at (-126:1) {~} edge (c1) edge (c2) edge (c3);
      \node[draw,circle] (d2) at (-126:1.5) {~} edge (c1) edge (c2) edge (c3);
      \node[draw,circle] (d3) at (-126:2) {~} edge (c1) edge (c2) edge (c3);
      \node[draw,circle] (e1) at (162:1) {~}
        edge (a1) edge (d1) edge (d2) edge (d3);
      \node[draw,circle] (e2) at (162:2) {~}
        edge (a1) edge (d1) edge (d2) edge (d3);
      \node at (0,-2.5) {$\Gamma_2(1, 3, 2, 2, 3)$};
    \end{scope}
  \end{tikzpicture}
  \caption{Near-regular graphs homomorphic to $\Gamma_2$ and not ruled out
    by comparing \eqref{eqn:ktwo-exact} with \eqref{eqn:perm-ibound}.}
  \label{fig:gtwo-exceptions}
\end{figure}
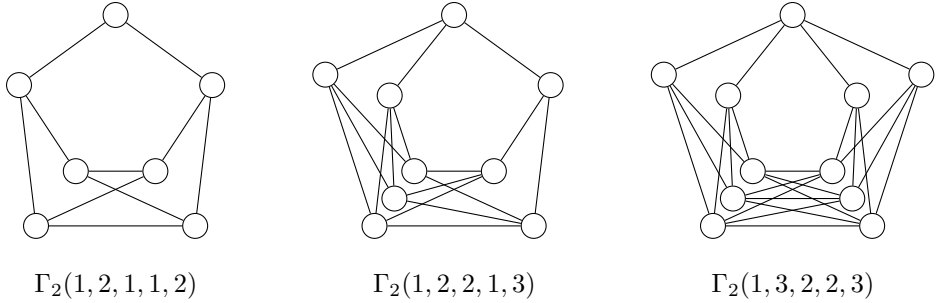

\begin{equation}
  \begin{aligned}
    c(\Gamma_2(1,1,2,1,2)) &= 15 \, , &
    c(\Gamma_2(1,2,2,1,3)) &= 216 \, , &
    c(\Gamma_2(1,3,2,2,3)) &= 3051 \, , \\
    c(T(7,2)) &= 42 \, , &
    c(T(9,2)) &= 660 \, , &
    c(T(11,2)) &= 15390 \, .
  \end{aligned}
  \label{eqn:gtwo-exceptions}
\end{equation}

These results suffice to establish the following theorem, which limits the
remaining possibilities for near-regular graphs that could be cycle-maximal
triangle-free.

\begin{theorem}\label{thm:near-reg-triangle-free}
If a graph $G$ with $n$ vertices and $m$ edges is cycle-maximal
triangle-free, its minimum and maximum degrees differ by exactly one, and
$G$ is not $T(n,2)$ with $n$ odd, then $n\le 91$, the minimum degree
in $G$ is at most $3n/8$, and $G$ is not homomorphic to $C_5$.
\end{theorem}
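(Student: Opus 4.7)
The plan is to combine the integer-programming and permanent-based cycle-count arguments developed earlier in Section~\ref{sec:near-regular} and carry out a case split on whether $G$ is homomorphic to $\Gamma_2 = C_5$. A crude preliminary bound comes first: because $G\ne T(n,2)$ and any near-regular triangle-free graph with $\delta>2n/5$ is bipartite by Andr\'asfai-Erd\H{o}s-S\'os (and then equals $T(n,2)$ by Lemma~\ref{lem:equivalent-props}), we may assume $\delta \le 2n/5$, so $m \le n^2/5 + (n-1)/2$. Substituting into \eqref{eqn:edges-stirling} and comparing with \eqref{eqn:ktwo-numerical} gives $n \le 804$, and running the constraint system \eqref{eqn:nearreg-csts} in \eclipse\ tightens this to $n \le 435$.

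For the first main step I show that if $G$ is \emph{not} homomorphic to $\Gamma_2$, then $n \le 91$ and $\delta \le 3n/8$. The degree bound follows from Jin's theorem: $\delta > 3n/8$ in a maximal triangle-free graph would force $G$ homomorphic to $\Gamma_2$, contradiction. Adjoining the linear constraint $\delta \le 3n/8$ to \eqref{eqn:nearreg-csts} and re-running the integer program collapses the upper bound to $n \le 91$, establishing all three conclusions simultaneously in this case.

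For the second main step I rule out the $\Gamma_2$-homomorphic case outright. Lemma~\ref{lem:nearreg-gtwo-blowup} restricts $G$ to a blowup $\Gamma_2(n_1,\ldots,n_5)$ with part sizes differing by at most $2$, and identifies it as a subgraph of $\Gamma_2(\lfloor (n+6)/5\rfloor)$. Applying \eqref{eqn:hmorph-qn} to this supergraph and comparing with the exact Tur\'an count \eqref{eqn:ktwo-exact} reduces the bound in this case to $n \le 184$. I then enumerate, in \eclipse, all near-regular blowups $\Gamma_2(n_1,\ldots,n_5)$ compatible with \eqref{eqn:nearreg-csts} and $n \le 184$, and test each candidate by comparing \eqref{eqn:ktwo-exact} against the permanent bound \eqref{eqn:perm-ibound}. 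All but three candidates fail to reach the Tur\'an count; for the three survivors $\Gamma_2(1,2,1,1,2)$, $\Gamma_2(1,2,2,1,3)$, and $\Gamma_2(1,3,2,2,3)$ (depicted in Figure~\ref{fig:gtwo-exceptions}), the exact counts \eqref{eqn:gtwo-exceptions} show they each have strictly fewer cycles than the corresponding $T(n,2)$. Hence no homomorphism of $G$ to $\Gamma_2$ is possible, and combining with the first step completes the proof.

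The main obstacle will be the enumeration in the second step. The permanent bound \eqref{eqn:perm-ibound} must be evaluated on block matrices of total size up to $184$, and computing the matrix permanent is $\#P$-complete in general, so naive evaluation on every candidate tuple $(n_1,\ldots,n_5)$ is infeasible. Making this search tractable requires the specialized permanent algorithm developed in Section~\ref{sec:algorithm}, which exploits the block structure of the matrices arising from uniform and near-uniform blowups; everything else amounts to fairly mechanical constraint-satisfaction runs and direct applications of the bounds already proved.
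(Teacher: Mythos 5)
Your proposal is correct and follows essentially the same route as the paper's proof: the preliminary bounds $n\le 804$ and $n\le 435$ from \eqref{eqn:edges-stirling} versus \eqref{eqn:ktwo-numerical} and the constraint system \eqref{eqn:nearreg-csts}, then a case split in which graphs homomorphic to $C_5$ are eliminated via Lemma~\ref{lem:nearreg-gtwo-blowup}, the permanent bound \eqref{eqn:perm-ibound}, and the exact counts \eqref{eqn:gtwo-exceptions}, while graphs not homomorphic to $C_5$ get $\delta\le 3n/8$ from the Andr\'asfai--H\"aggkvist--Jin degree results and hence $n\le 91$ from the augmented integer program. The only (immaterial) differences are expository: you spell out the $m\le n^2/5+(n-1)/2$ derivation and the role of the block-permanent algorithm, which the paper states elsewhere in Section~\ref{sec:near-regular} and Section~\ref{sec:algorithm}.
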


\begin{proof}
Suppose $G$ is a counterexample.  By comparing \eqref{eqn:edges-stirling}
with \eqref{eqn:ktwo-numerical}, $n\le 804$.  By solving the constraints
\eqref{eqn:nearreg-csts}, $n\le 435$.

For graphs homomorphic to $C_5$, by applying
Lemma~\ref{lem:nearreg-gtwo-blowup}, $n\le 184$.  Then by examining specific
graphs and comparing \eqref{eqn:ktwo-exact} with
\eqref{eqn:perm-ibound}, the three graphs shown in
Figure~\ref{fig:gtwo-exceptions} are the last remaining graphs homomorphic
to $C_5$, and \eqref{eqn:gtwo-exceptions} eliminates them.
For graphs not homomorphic to $C_5$: the minimum degree is at most
$3n/8$ because $G$ is maximal triangle-free.  Then by adding that constraint
to \eqref{eqn:nearreg-csts} and solving, $n\le 91$.
\end{proof}

% $Id: algorithm.tex 3431 2014-10-28 08:08:19Z mskala $

\section{Algorithmic aspects of the upper bound calculation}
\label{sec:algorithm}

Lemma~\ref{lem:perm-bound} gives a bound~\eqref{eqn:perm-ibound} on number
of cycles in a graph in terms of the permanent of a matrix; that is the sum,
over all ways to choose one entry from each row and column, of the product
of the chosen entries.  Note that the matrix permanent is identical to the
matrix determinant except that in the determinant, each product is given a
sign depending on the parity of the permutation.  For the permanent, the
products are simply added.  Removing the signs has significant consequences
for the difficulty of computing the permanent: whereas computing the
determinant of an $n\times n$ matrix has the same asymptotic time complexity
as matrix multiplication (Cormen \emph{et al.}\ give this as an
exercise~\cite[Exercise 28.2--3]{CLRS}), permanent, like cycle counting, is
in general a $\#P$-complete problem, even when limited to 0-1
matrices~\cite{Valiant:Complexity}.

Solving one $\#P$-complete problem just to bound another is not
obviously useful.  However, the matrices for which we compute the
permanent to evaluate~\eqref{eqn:perm-ibound} are of a special form which
makes the computation much easier.
In this section we describe an algorithm
to compute such permanents with time complexity
having exponential dependence on $p$ (the number of vertices in $H$) but not
on $n$ (the number of vertices in $G$, and size of the matrix).

Ryser's formula~\cite{Ryser:Combinatorial} for the permanent of an $n\times
n$ matrix with entries $(a_{ij})$ is
\begin{equation}
    \perm (a_{ij}) =
      \hspace{-0.7em}\sum_{S\subseteq \{1,2,\ldots,n\}}\hspace{-0.7em}
      (-1)^{n-|S|} \prod_{i=1}^n \sum_{j \in S} a_{ij} \, .
    \label{eqn:ryser}
\end{equation}

Ryser's formula is a standard method for computing the permanent.  To
summarize it in words, the permanent is the sum over all subsets of the
columns of the matrix, of the product over all rows, of the sums of entries
in the chosen columns, with signs according to the parity of the size of the
subset.  The formula follows from applying the principle of inclusion and
exclusion to the permutation-based definition of permanent; and although
evaluating it has exponential time complexity because of the $2^n$ distinct
subsets of the columns, that is better than the factorial time complexity of
examining each permutation separately.

Suppose $A$ is an $n \times n$ binary matrix of the following form:
\begin{equation*}
  \begin{pmatrix}
    I_{n_1} & h_{12}J_{n_1n_2}          & \ldots & h_{1p}J_{n_1n_p} \\
    h_{21}J_{n_2n_1} & I_{n_2}          & \ldots & h_{2p}J_{n_2n_p} \\
    \vdots & \vdots &                     \ddots & \vdots \\
    h_{p1}J_{n_pn_1} & h_{p2}J_{n_pn_2} & \ldots & I_{n_p}
  \end{pmatrix} \, .
\end{equation*}
The rows are divided into $p$ blocks with sizes $n_1$, $n_2$, \ldots, $n_p$,
with $n=n_1+n_2+\cdots+n_p$.  The columns are divided into the same pattern
of blocks, giving the matrix an overall structure of $p$ blocks by $p$
blocks, with square blocks along the main diagonal but the other
blocks not necessarily square.  Furthermore, the blocks along the diagonal
of $A$ are identity matrices $I_{n_i}$ and the other blocks are of the form
$h_{ij}J_{n_in_j}$ with $h_{ij}\in\{0,1\}$; that is, blocks of all zeros or
all ones.  This is the form of the matrix for which we calculate the
permanent to evaluate~\eqref{eqn:perm-ibound}.

Observe that because of the block structure, many choices of the subset $S$
in~\eqref{eqn:ryser} will produce the same product of row sums.  The inside
of the first summation in~\eqref{eqn:ryser}, for matrices in the form we
consider, depends on how many columns are chosen from each block, but not
which ones.  If we let $k_i$ for $i \in \{1,2,\ldots,p\}$ be the number of
columns chosen in block $i$, then we can sum over the choices of all the
$k_i$ rather than the choices of $S$, using binomial coefficients to count
the number of choices of $S$ for each choice of all the $k_i$.  Furthermore,
the innermost sum need only contain $p$ terms for the block columns rather
than $n$ for the matrix columns, because we can collapse the sum within a
block of columns into $0$ for a block of all zeros; the number of columns
selected from the block for a block of all ones; or either $0$ or $1$ for an
identity-matrix block depending on whether we are in a row corresponding to
a selected column.  The product, similarly, only requires $2p$ factors,
raised to the appropriate powers, for the block rows and the choice of
``selected'' or ``not selected'' matrix rows; not $n$ possibilities for all
the matrix rows.  Algorithm~\ref{alg:permanent} gives pseudocode for the
calculation.

\begin{algorithm}
\caption{}\label{alg:permanent}
\begin{algorithmic}
  \STATE $result \gets 0$
  \FORALL{integer vectors $\langle k_1,k_2,\ldots,k_p \rangle$
    such that $0\le k_i \le n_i$}
    \STATE $cprod \gets 1$
    \FOR{$row=1$ \TO $p$}
      \STATE $rsum \gets 0$
      \FOR{$col=1$ \TO $p$}
        \IF{$row\ne col$ \AND $h[row,col]=1$}
          \STATE $rsum \gets rsum + k_{col}$
        \ENDIF
      \ENDFOR
      \STATE $cprod \gets cprod \cdot (rsum+1)^{k_{row}}
        \cdot rsum^{n_{row}-k_{row}}$
    \ENDFOR
    \STATE $result \gets result+cprod \cdot
      \prod_{i=1}^p (-1)^{n_i-k_i}\binom{n_i}{k_i}$
  \ENDFOR
  \RETURN{$result$}
\end{algorithmic}
\end{algorithm}

There are $\prod_{i=1}^p (n_i+1)$ choices for the vector $\langle
k_1,k_2,\ldots,k_p \rangle$; because equal division is the worst case,
that is $O(((n/p)+1)^p)$.
For each such vector, the inner loops do
$O(p^2)$ operations, giving the following result.

\begin{theorem}
There exists an algorithm to compute the permanent of a matrix $A$ in
$O(p^2((n/p)+1)^p)$ integer arithmetic operations if $A$ is an $n\times n$
matrix divided into $p$ blocks by $p$ blocks, not necessarily all of the
same size, in which the blocks along the main diagonal are identity matrices
and the other blocks each consist of all zeros or all ones.
\end{theorem}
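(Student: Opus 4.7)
The plan is to verify both correctness and the running-time bound for Algorithm~\ref{alg:permanent}, starting from Ryser's formula~\eqref{eqn:ryser} and exploiting the block structure of $A$ to collapse the $2^n$ subset sum into a sum over integer vectors of length $p$.

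First I would apply Ryser's formula directly, writing $\perm A = \sum_{S} (-1)^{n-|S|} \prod_{i=1}^n \sum_{j\in S} a_{ij}$. The key structural observation is that for a subset $S$ of column indices, both the sign and each inner row sum depend only on how many indices $S$ contains in each column block and, for identity-block rows, whether the row's own index lies in $S$. Concretely, if we let $k_\ell = |S \cap \text{block }\ell|$, then for any row in block $i$, the contribution from the off-diagonal entries is $\mathrm{rsum}_i := \sum_{\ell \ne i} h_{i\ell} k_\ell$, while the diagonal block contributes $1$ if that row's column index lies in $S$ and $0$ otherwise. Grouping the $n_i$ rows of block $i$ accordingly, exactly $k_i$ of them contribute $\mathrm{rsum}_i + 1$ and the remaining $n_i - k_i$ contribute $\mathrm{rsum}_i$, so the row-product factors as $\prod_{i=1}^p (\mathrm{rsum}_i + 1)^{k_i} \mathrm{rsum}_i^{n_i - k_i}$.

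Next I would replace the sum over subsets $S$ by a sum over integer vectors $\langle k_1, \ldots, k_p\rangle$ with $0 \le k_i \le n_i$, multiplying by the number of subsets with the given signature, which is $\prod_i \binom{n_i}{k_i}$, and absorbing the sign $(-1)^{n-|S|} = \prod_i (-1)^{n_i - k_i}$. This gives exactly the summation implemented by Algorithm~\ref{alg:permanent}, establishing correctness.

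For the complexity bound, the outer loop ranges over $\prod_{i=1}^p (n_i + 1)$ vectors. Subject to $\sum_i n_i = n$, the AM--GM inequality yields $\prod_i (n_i + 1) \le \bigl((n/p) + 1\bigr)^p$. Each iteration computes the $p$ values $\mathrm{rsum}_i$ in $O(p^2)$ time, forms the product over $p$ blocks in $O(p \log n)$ time for the exponentiations (absorbable into $O(p^2)$), and updates $result$ using the precomputable binomial coefficients. The overall count is therefore $O\bigl(p^2 ((n/p)+1)^p\bigr)$ integer operations, as claimed. The only subtlety worth flagging is the AM--GM step that converts the factored product bound into the stated uniform-block-size bound; everything else is a routine bookkeeping reduction of Ryser's formula under the block symmetry.
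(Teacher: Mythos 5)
Your proposal is correct and follows essentially the same route as the paper: it derives Algorithm~\ref{alg:permanent} by collapsing Ryser's formula over the block structure, counts signatures with $\prod_i\binom{n_i}{k_i}$, and bounds $\prod_i(n_i+1)$ by $((n/p)+1)^p$ via equal division being the worst case (your AM--GM step is just a more explicit version of the paper's remark). No gaps.
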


When $p=n$, the case of general unblocked $n\times n$ matrices, this time
bound reduces to $O(n^22^n)$, which is the same as a straightforward
implementation of Ryser's formula.
Note that we describe the time complexity in terms of
``integer arithmetic operations.'' The value of the permanent can be on the
order of the factorial of the number of vertices $n$, in which case
representing it takes $O(n)$ words of $O(\log n)$ bits each.  We cannot do
arithmetic on such large numbers in constant time in the standard RAM model
of computation.  However, including an extended analysis here of the cost of
multiple-precision arithmetic would make the presentation more confusing
without providing any deeper insight into how the algorithm works.  Thus
we do the analysis in the unit cost model, with the caution that the
cost of arithmetic may be non-constant in practice and should be
considered when implementing the algorithm.
Even if our model does not include ``binomial coefficient'' as a
primitive constant-time operation, we can first build a
table of $k!$ for $k$ from 1 to $n$ with $O(n)$ multiplications, then
calculate $\binom{n}{k}$ as $n!/k!(n-k)!$ with three table lookups;
time and space to build the table are lower order than the overall cost
of Algorithm~\ref{alg:permanent}.

For the proofs in the previous sections, we implemented this algorithm in
the \eclipse\ language~\cite{Schimpf:Eclipse} with no particular effort to
optimize it, and found that the cost of calculating permanents to bound
cycle counts was comparable to the cost of the
integer programming to find the graphs in the first place, typically a few
CPU seconds per graph for small cases, up to a few hours for the largest
cases of interest.

% $Id: conclusion.tex 2502 2013-09-27 19:00:17Z mskala $

\section{Conclusions and future work}
\label{sec:conclusion}

Conjecture~\ref{con:main} postulates that the bipartite Tur\'an graphs
achieve the maximum number of cycles among all triangle-free graphs. 
Depending on the parity of $n$, $T(n,2)$ is either regular or near-regular;
and we have ruled out all regular graphs and all but a finite number of
near-regular graphs as potential counterexamples to
Conjecture~\ref{con:main}.  It appears that our current techniques might be
extended to cover a few more of the near-regular cases by proving results
like Lemma~\ref{lem:nearreg-gtwo-blowup} for $\Gamma_3$, $\Gamma_4$, and so
on.  Each one reduces the maximum value of $\delta(G)/n$, and therefore the
maximum value of $n$, for which counterexamples could exist.

However, even if we could do this for all $\Gamma_i$, and extend the theory
to cover $4$-chromatic graphs too using the ``Vega graph'' classification
results of Brandt and Thomass\'e~\cite{Brandt:Dense}, potential
counterexamples with as many as 30 vertices would remain, and too many of
them to exhaustively enumerate as we did in the case of regular graphs. 
Similar issues apply even more strongly to graphs with $\Delta(G)-\delta(g)$
a constant $k>1$, even though by Corollary~\ref{cor:near-regular}, the
number of possible counterexamples is finite for any fixed $k$.  It seems
clear that to close these gaps will require a better theoretical
understanding of graphs with $\delta(G)$ less than but close to $n/3$, and
to finally prove Conjecture~\ref{con:main} we need better bounds for graphs
that are far from being regular.

When the girth increases beyond four the structure of cycle-maximal
graphs appears to change significantly.  In particular, they are not just
complete bipartite graphs with degree-two vertices inserted to increase the
lengths of the cycles.  For small values of $n$, our computer search showed
that most vertices in cycle-maximal graphs of fixed minimum girth $g \geq 5$
have degree three, with a few vertices of degree two and four present in
some cases.  Our preliminary examination of cycle-maximal graphs of girth
greater than four has yet to suggest any natural characterization of these
graphs, even when graphs are restricted to having regular degree.

% $Id: acknowledgements.tex 3431 2014-10-28 08:08:19Z mskala $

\section*{Acknowledgements}
\label{sec:acknowledgements}

The authors thank the participants of the 2010 Workshop on Routing
in Merida with whom this problem was discussed for graphs of girth $g$
(for any general fixed value $g$); and an anonymous reviewer for a
correction to Lemma~\ref{lem:edge-bound}.

Work of the first and second authors was supported in part by the Natural
Sciences and Engineering Research Council of Canada (NSERC).

%%%%%%%%%%%%%%%%%%%%%%%%%%%%%%%%%%%%%%%%%%%%%%%%%%%%%%%%%%%%%%%%%%%%%%%%

\appendix

% $Id: example.tex 2537 2013-10-17 16:29:25Z mskala $

\section{Example: the permanent bound for $C_5(2)$}
\label{sec:example}

This appendix demonstrates the permanent-based bound on number of cycles in
the graph $C_5(2)$, shown at left in Figure~\ref{fig:cfivetwo-example}. 
This graph comes up when trying to think of counterexamples to
Conjecture~\ref{con:main}: there is no instantly obvious reason for it to
have fewer cycles than $T(10,2)$, but in fact, it does have fewer cycles.

\begin{figure}
  \centering
  \begin{tikzpicture}
    \begin{scope}[xshift=-3cm]
      \node[draw,circle] (a1) at (18:1) {~};
      \node[draw,circle] (a2) at (18:2) {~};
      \node[draw,circle] (b1) at (90:1) {~} edge (a1) edge (a2);
      \node[draw,circle] (b2) at (90:2) {~} edge (a1) edge (a2);
      \node[draw,circle] (c1) at (162:1) {~} edge (b1) edge (b2);
      \node[draw,circle] (c2) at (162:2) {~} edge (b1) edge (b2);
      \node[draw,circle] (d1) at (234:1) {~} edge (c1) edge (c2);
      \node[draw,circle] (d2) at (234:2) {~} edge (c1) edge (c2);
      \node[draw,circle] (e1) at (306:1) {~}
        edge (a1) edge (a2) edge (d1) edge (d2);
      \node[draw,circle] (e2) at (306:2) {~}
        edge (a1) edge (a2) edge (d1) edge (d2);
      \node at (0,-2.25) {$G=C_5(2)$};
    \end{scope}
    \begin{scope}[xshift=3cm]
      \node[draw,circle] (a1) at (18:1.75) {$v_3$};
      \node[draw,circle] (b1) at (90:1.75) {$v_1$} edge (a1);
      \node[draw,circle] (c1) at (162:1.75) {$v_4$} edge (b1);
      \node[draw,circle] (d1) at (234:1.75) {$v_2$} edge (c1);
      \node[draw,circle] (e1) at (306:1.75) {$v_5$} edge (a1) edge (d1);
      \node at (0,-2.25) {$H=C_5$};
    \end{scope}
    \node at (0,0) {$\Rightarrow$};
  \end{tikzpicture}
  \caption{Graphs for the permanent bound example.}\label{fig:cfivetwo-example}
\end{figure}
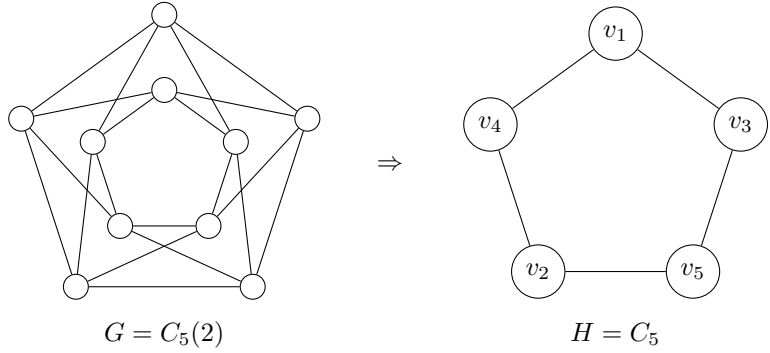

Let $G$ be the graph $C_5(2)$ and let $H$ be the graph $C_5$, which is the
same as $\Gamma_2$.  Figure~\ref{fig:cfivetwo-example} shows the
vertices of $H$ labelled as in the definition of $\Gamma_2$.  The adjacency
matrix of $H$ is
\begin{equation*}
  \begin{pmatrix}
    0 & 0 & 1 & 1 & 0 \\
    0 & 0 & 0 & 1 & 1 \\
    1 & 0 & 0 & 0 & 1 \\
    1 & 1 & 0 & 0 & 0 \\
    0 & 1 & 1 & 0 & 0
  \end{pmatrix} \, .
\end{equation*}

The graph $G$ is obtained by blowing up each vertex of $H$ into a two-vertex
independent set, and in the adjacency matrix that is equivalent to replacing
each element with a $2\times 2$ submatrix.  To apply the bound of
Lemma~\ref{lem:perm-bound}, we also add ones along the diagonal, giving this
modified version of the adjacency matrix of $G$:
\begin{equation*}
  \left(\begin{array}{cc|cc|cc|cc|cc}
    1 & 0 & 0 & 0 & 1 & 1 & 1 & 1 & 0 & 0 \\
    0 & 1 & 0 & 0 & 1 & 1 & 1 & 1 & 0 & 0 \\ \hline
    0 & 0 & 1 & 0 & 0 & 0 & 1 & 1 & 1 & 1 \\
    0 & 0 & 0 & 1 & 0 & 0 & 1 & 1 & 1 & 1 \\ \hline
    1 & 1 & 0 & 0 & 1 & 0 & 0 & 0 & 1 & 1 \\
    1 & 1 & 0 & 0 & 0 & 1 & 0 & 0 & 1 & 1 \\ \hline
    1 & 1 & 1 & 1 & 0 & 0 & 1 & 0 & 0 & 0 \\
    1 & 1 & 1 & 1 & 0 & 0 & 0 & 1 & 0 & 0 \\ \hline
    0 & 0 & 1 & 1 & 1 & 1 & 0 & 0 & 1 & 0 \\
    0 & 0 & 1 & 1 & 1 & 1 & 0 & 0 & 0 & 1
  \end{array}\right) \, .
\end{equation*}

The permanent of that $10\times 10$ matrix is 5753, so by
Lemma~\ref{lem:perm-bound}, taking the floor because the cycle count is an
integer, $C_5(2)$ contains at most 2876 cycles.  In fact, by exact count
$C_5(2)$ contains 593 cycles.  Both numbers are less than the 3940 cycles in
$T(10,2)$.

% $Id: numerics.tex 2542 2013-10-18 02:55:45Z mskala $

\section{Numerical results}
\label{sec:numerics}

Tables~\ref{tab:counts-lo} and~\ref{tab:counts-hi} list exact counts and
bounds on the number of cycles in various graphs, sorted by number of
vertices for easier comparisons.

\begin{table}
\caption{Cycle counts and bounds for various graphs, $n\le 30$.}
\label{tab:counts-lo}
\centering
\begin{tabular}{rrrc}
$G$ & $n$ & $c(G)$ & from \\ \hline
$\Gamma_2=C_5$ & 5 & 1 & obvious \\
$K_{2,3}$ & 5 & 3 & \eqref{eqn:ktwo-exact} \\
$\Gamma_3$ & 8 & $\le$130 & \eqref{eqn:perm-ibound}\\
$K_{4,4}$ & 8 & 204 & \eqref{eqn:ktwo-exact} \\
$\Gamma_2(2)$ & 10 & $\le$2~876 & \eqref{eqn:perm-ibound}\\
$K_{5,5}$ & 10 & 3~940 & \eqref{eqn:ktwo-exact} \\
$\Gamma_4$ & 11 & $\le$6~151 & \eqref{eqn:perm-ibound}\\
$K_{5,6}$ & 11 & 15~390 & \eqref{eqn:ktwo-exact} \\
$\Gamma_5$ & 14 & $\le$602~261 & \eqref{eqn:perm-ibound}\\
$K_{7,7}$ & 14 & 4~662~231 & \eqref{eqn:ktwo-exact} \\
$\Gamma_2(3)$ & 15 & $\le$12~782~394 & \eqref{eqn:perm-ibound}\\
$K_{7,8}$ & 15 & 24~864~588 & \eqref{eqn:ktwo-exact} \\
$\Gamma_3(2)$ & 16 & $\le$36~552~880 & \eqref{eqn:perm-ibound}\\
$K_{8,8}$ & 16 & 256~485~040 & \eqref{eqn:ktwo-exact} \\
$\Gamma_6$ & 17 & $\le$104~770~595 & \eqref{eqn:perm-ibound}\\
$K_{8,9}$ & 17 & 1~549~436~112 & \eqref{eqn:ktwo-exact} \\
$\Gamma_7$ & 20 & $\le$29~685~072~610 & \eqref{eqn:perm-ibound}\\
$\Gamma_2(4)$ & 20 & $\le$275~455~237~776 & \eqref{eqn:perm-ibound}\\
$K_{10,10}$ & 20 & 1~623~855~701~385 & \eqref{eqn:ktwo-exact} \\
$\Gamma_4(2)$ & 22 & $\le$3~544~330~396~616 & \eqref{eqn:perm-ibound}\\
$K_{11,11}$ & 22 & 177~195~820~499~335 & \eqref{eqn:ktwo-exact} \\
$\Gamma_3(3)$ & 24 & $\le$504~887~523~966~914 & \eqref{eqn:perm-ibound}\\
$K_{12,12}$ & 24 & 23~237~493~232~953~516 & \eqref{eqn:ktwo-exact} \\
$\Gamma_2(5)$ & 25 & $\le$19~610~234~100~506~750 & \eqref{eqn:perm-ibound}\\
$K_{12,13}$ & 25 & 205~717~367~581~496~628 & \eqref{eqn:ktwo-exact} \\
$\Gamma_5(2)$ & 28 & $\le$1~583~204~062~862~484~492 & \eqref{eqn:perm-ibound}\\
$K_{14,14}$ & 28 & 653~193~551~573~628~900~289 & \eqref{eqn:ktwo-exact} \\
$\Gamma_2(6)$ & 30 & $\le$3~664~979~770~718~930~748~156 & \eqref{eqn:perm-ibound}\\
$K_{15,15}$ & 30 & 136~634~950~180~317~224~866~335 & \eqref{eqn:ktwo-exact}
\end{tabular}
\end{table}

\begin{sidewaystable}
\caption{Cycle counts and bounds for various graphs, $n>30$.}
\label{tab:counts-hi}
\centering
\begin{tabular}{rrrc}
$G$ & $n$ & $c(G)$ & from \\ \hline
$\Gamma_3(4)$ & 32 & $\le$93~314~267~145~221~727~988~928 & \eqref{eqn:perm-ibound}\\
$K_{16,16}$ & 32 & 32~681~589~590~709~963~123~092~160 & \eqref{eqn:ktwo-exact} \\
$\Gamma_4(3)$ & 33 & $\le$472~536~908~624~040~051~159~801 & \eqref{eqn:perm-ibound}\\
$K_{16,17}$ & 33 & 380~842~679~006~967~756~257~282~880 & \eqref{eqn:ktwo-exact} \\
$\Gamma_2(7)$ & 35 & $\le$1~538~132~015~230~964~742~594~686~226 & \eqref{eqn:perm-ibound}\\
$K_{17,18}$ & 35 & 109~481~704~025~024~759~751~150~754~248 & \eqref{eqn:ktwo-exact} \\
$\Gamma_3(5)$ & 40 & $\le$121~876~741~093~584~265~201~282~594~275~138 & \eqref{eqn:perm-ibound}\\
$\Gamma_2(8)$ & 40 & $\le$1~295~546~973~219~341~717~643~333~826~977~344 & \eqref{eqn:perm-ibound}\\
$K_{20,20}$ & 40 & 350~014~073~794~168~154~275~473~348~323~458~540 & \eqref{eqn:ktwo-exact} \\
$\Gamma_2(9)$ & 45 & $\le$2~011~552~320~593~475~430~049~513~125~845~530~235~126 & \eqref{eqn:perm-ibound}\\
$K_{22,23}$ & 45 & 1~072~464~279~544~434~376~131~539~091~650~605~148~971~323 & \eqref{eqn:ktwo-exact} \\
$\Gamma_3(6)$ & 48 & $\le$765~658~164~243~897~411~689~143~843~074~192~950~614~512 & \eqref{eqn:perm-ibound}\\
$K_{24,24}$ & 48 & 18~847~819~366~080~117~996~802~964~862~587~612~140~097~642~544 & \eqref{eqn:ktwo-exact} \\
$\Gamma_2(10)$ & 50 & $\le$5~387~065~180~713~482~750~668~088~096~305~965~320~151~649~500 & \eqref{eqn:perm-ibound}\\
$K_{25,25}$ & 50 & 11~294~267~336~237~005~395~453~340~472~970~226~376~143~920~186~000 & \eqref{eqn:ktwo-exact} \\
$\Gamma_3(7)$ & 56 & $\le$17~877~864~251~518~595~245~276~779~749~582~885~338~633~210~045~796~098 & \eqref{eqn:perm-ibound}\\
$K_{28,28}$ & 56 & 3~883~426~377~993~747~808~177~077~817~275~217~253~080~577~404~858~001~996~940 & \eqref{eqn:ktwo-exact}
\end{tabular}
\end{sidewaystable}

%%%%%%%%%%%%%%%%%%%%%%%%%%%%%%%%%%%%%%%%%%%%%%%%%%%%%%%%%%%%%%%%%%%%%%%%

\clearpage

\bibliographystyle{model1b-num-names}
\bibliography{cyclemax}

\providecommand{\noopsort}[1]{}
\begin{thebibliography}{36}
\expandafter\ifx\csname natexlab\endcsname\relax\def\natexlab#1{#1}\fi
\providecommand{\url}[1]{\texttt{#1}}
\providecommand{\href}[2]{#2}
\providecommand{\path}[1]{#1}
\providecommand{\DOIprefix}{doi:}
\providecommand{\ArXivprefix}{arXiv:}
\providecommand{\URLprefix}{URL: }
\providecommand{\Pubmedprefix}{pmid:}
\providecommand{\doi}[1]{\href{http://dx.doi.org/#1}{\path{#1}}}
\providecommand{\Pubmed}[1]{\href{pmid:#1}{\path{#1}}}
\providecommand{\bibinfo}[2]{#2}
\ifx\xfnm\relax \def\xfnm[#1]{\unskip,\space#1}\fi
%Type = Article
\bibitem[{Aldred and Thomassen(2008)}]{Aldred:Maximum}
\bibinfo{author}{R.E.L. Aldred}, \bibinfo{author}{C.~Thomassen},
  \bibinfo{title}{On the maximum number of cycles in a planar graph},
  \bibinfo{journal}{Journal of Graph Theory} \bibinfo{volume}{57}
  (\bibinfo{year}{2008}) \bibinfo{pages}{255--264}.
%Type = Incollection
\bibitem[{Alt et~al.(1997)Alt, Fuchs and Kriegel}]{Alt:Number}
\bibinfo{author}{H.~Alt}, \bibinfo{author}{U.~Fuchs},
  \bibinfo{author}{K.~Kriegel}, \bibinfo{title}{On the number of simple cycles
  in planar graphs}, in: \bibinfo{booktitle}{Proceedings of the 26th
  International Workshop on Graph-Theoretic Concepts in Computer Science (WG)},
  volume \bibinfo{volume}{1335} of \textit{\bibinfo{series}{Lecture Notes in
  Computer Science}}, \bibinfo{year}{1997}, pp. \bibinfo{pages}{15--24}.
%Type = Article
\bibitem[{Andr{\'{a}}sfai(1964)}]{Andrasfai:G18}
\bibinfo{author}{B.~Andr{\'{a}}sfai}, \bibinfo{title}{Graphentheoretische
  {Extremalprobleme}}, \bibinfo{journal}{Acta Mathematica Academiae Scientiarum
  Hungarica} \bibinfo{volume}{15} (\bibinfo{year}{1964})
  \bibinfo{pages}{413--438}.
%Type = Article
\bibitem[{Andr{\'a}sfai et~al.(1974)Andr{\'a}sfai, Erd{\H{o}}s and
  S{\'o}s}]{Andrasfai:Connection}
\bibinfo{author}{B.~Andr{\'a}sfai}, \bibinfo{author}{P.~Erd{\H{o}}s},
  \bibinfo{author}{V.T. S{\'o}s}, \bibinfo{title}{On the connection between
  chromatic number, maximal clique and minimal degree of a graph},
  \bibinfo{journal}{Discrete Mathematics} \bibinfo{volume}{8}
  (\bibinfo{year}{1974}) \bibinfo{pages}{205--218}.
%Type = Article
\bibitem[{Arditti and de~Werra(1976)}]{Arditti:Note}
\bibinfo{author}{J.C. Arditti}, \bibinfo{author}{D.~de~Werra},
  \bibinfo{title}{A note on a paper by {D}. {Seinsche}},
  \bibinfo{journal}{Journal of Combinatorial Theory Series B}
  \bibinfo{volume}{21} (\bibinfo{year}{1976}) \bibinfo{pages}{90}.
%Type = Article
\bibitem[{Arnborg et~al.(1991)Arnborg, Lagergren and Seese}]{Arnborg:Easy}
\bibinfo{author}{S.~Arnborg}, \bibinfo{author}{J.~Lagergren},
  \bibinfo{author}{D.~Seese}, \bibinfo{title}{Easy problems for
  tree-decomposable graphs}, \bibinfo{journal}{Journal of Algorithms}
  \bibinfo{volume}{12} (\bibinfo{year}{1991}) \bibinfo{pages}{308--340}.
%Type = Article
\bibitem[{Bern et~al.(1987)Bern, Lawler and Wong}]{Bern:Linear}
\bibinfo{author}{M.W. Bern}, \bibinfo{author}{E.L. Lawler},
  \bibinfo{author}{A.L. Wong}, \bibinfo{title}{Linear-time computation of
  optimal subgraphs of decomposable graphs}, \bibinfo{journal}{Journal of
  Algorithms} \bibinfo{volume}{8} (\bibinfo{year}{1987})
  \bibinfo{pages}{216--235}.
%Type = Incollection
\bibitem[{Bodlaender(1988)}]{Bodlaender:Dynamic}
\bibinfo{author}{H.L. Bodlaender}, \bibinfo{title}{Dynamic programming on
  graphs with bounded treewidth}, in: \bibinfo{booktitle}{Proceedings of the
  15th International Colloquium on Automata, Languages and Programming
  (ICALP)}, volume \bibinfo{volume}{317} of \textit{\bibinfo{series}{Lecture
  Notes in Computer Science}}, \bibinfo{publisher}{Springer},
  \bibinfo{year}{1988}, pp. \bibinfo{pages}{105--118}.
%Type = Article
\bibitem[{Bose et~al.(2013)Bose, Carmi and Durocher}]{Bose:Bounding}
\bibinfo{author}{P.~Bose}, \bibinfo{author}{P.~Carmi},
  \bibinfo{author}{S.~Durocher}, \bibinfo{title}{Bounding the locality of
  distributed routing algorithms}, \bibinfo{journal}{Distributed Computing}
  \bibinfo{volume}{26} (\bibinfo{year}{2013}) \bibinfo{pages}{39--58}.
%Type = Article
\bibitem[{Brandt(1999)}]{Brandt:Structure}
\bibinfo{author}{S.~Brandt}, \bibinfo{title}{On the structure of dense
  triangle-free graphs}, \bibinfo{journal}{Combinatorics, Probability and
  Computing} \bibinfo{volume}{8} (\bibinfo{year}{1999})
  \bibinfo{pages}{237--245}.
%Type = Misc
\bibitem[{Brandt and Thomass{\'{e}}(2006)}]{Brandt:Dense}
\bibinfo{author}{S.~Brandt}, \bibinfo{author}{S.~Thomass{\'{e}}},
  \bibinfo{title}{Dense triangle-free graphs are four-colorable: A solution to
  the {Erd{\H{o}}s}-{Simonovits} problem}, \bibinfo{year}{2006}.
  \bibinfo{note}{Preprint, online
  {\url{http://www2.lirmm.fr/~thomasse/liste/vega11.pdf}}.}
%Type = Article
\bibitem[{Brouwer(1995)}]{Brouwer:Finite}
\bibinfo{author}{A.~Brouwer}, \bibinfo{title}{Finite graphs in which the point
  neighbourhoods are the maximal independent sets}, \bibinfo{journal}{From
  universal morphisms to megabytes: a Baayen space odyssey (K. Apt, ed.), CWI
  Amsterdam}  (\bibinfo{year}{1995}) \bibinfo{pages}{231--233}.
%Type = Incollection
\bibitem[{Buchin et~al.(2007)Buchin, Knauer, Kriegel, Schulz and
  Seidel}]{Buchin:Number}
\bibinfo{author}{K.~Buchin}, \bibinfo{author}{C.~Knauer},
  \bibinfo{author}{K.~Kriegel}, \bibinfo{author}{A.~Schulz},
  \bibinfo{author}{R.~Seidel}, \bibinfo{title}{On the number of cycles in
  planar graphs}, in: \bibinfo{editor}{G.~Lin} (Ed.),
  \bibinfo{booktitle}{Proceedings of the 13th International Computing and
  Combinatorics Conference ({COCOON})}, volume \bibinfo{volume}{4598} of
  \textit{\bibinfo{series}{Lecture Notes in Computer Science}},
  \bibinfo{publisher}{Springer}, \bibinfo{year}{2007}, pp.
  \bibinfo{pages}{97--107}.
%Type = Book
\bibitem[{Cormen et~al.(2009)Cormen, Leiserson, Rivest and Stein}]{CLRS}
\bibinfo{author}{T.H. Cormen}, \bibinfo{author}{C.E. Leiserson},
  \bibinfo{author}{R.L. Rivest}, \bibinfo{author}{C.~Stein},
  \bibinfo{title}{Introduction to Algorithms}, \bibinfo{edition}{third} ed.,
  \bibinfo{publisher}{MIT Press}, \bibinfo{year}{2009}.
%Type = Article
\bibitem[{Courcelle(1990)}]{Courcelle:Monadic}
\bibinfo{author}{B.~Courcelle}, \bibinfo{title}{The monadic second-order logic
  of graphs. {I}. recognizable sets of finite graphs},
  \bibinfo{journal}{Information and Computation} \bibinfo{volume}{85}
  (\bibinfo{year}{1990}) \bibinfo{pages}{12--75}.
%Type = Article
\bibitem[{Entringer and Slater(1981)}]{Entringer:Maximum}
\bibinfo{author}{R.C. Entringer}, \bibinfo{author}{P.J. Slater},
  \bibinfo{title}{On the maximum number of cycles in a graph},
  \bibinfo{journal}{Ars Combinatoria} \bibinfo{volume}{11}
  (\bibinfo{year}{1981}) \bibinfo{pages}{289--294}.
%Type = Article
\bibitem[{Erd{\H{o}}s(1962)}]{Erdos:Rademacher}
\bibinfo{author}{P.~Erd{\H{o}}s}, \bibinfo{title}{On a theorem of
  {Rademacher}-{Tur{\'a}n}}, \bibinfo{journal}{Illinois Journal of Mathematics}
  \bibinfo{volume}{6} (\bibinfo{year}{1962}) \bibinfo{pages}{122--127}.
%Type = Article
\bibitem[{Erd{\H{o}}s et~al.(1988)Erd{\H{o}}s, Faudree, Pach and
  Spencer}]{Erdos:How}
\bibinfo{author}{P.~Erd{\H{o}}s}, \bibinfo{author}{R.J. Faudree},
  \bibinfo{author}{J.~Pach}, \bibinfo{author}{J.H. Spencer},
  \bibinfo{title}{How to make a graph bipartite}, \bibinfo{journal}{Journal of
  Combinatorial Theory, Series B} \bibinfo{volume}{45} (\bibinfo{year}{1988})
  \bibinfo{pages}{86--98}.
%Type = Article
\bibitem[{Flum and Grohe(2004)}]{Flum:Parameterized}
\bibinfo{author}{J.~Flum}, \bibinfo{author}{M.~Grohe}, \bibinfo{title}{The
  parameterized complexity of counting problems}, \bibinfo{journal}{SIAM
  Journal on Computing} \bibinfo{volume}{33} (\bibinfo{year}{2004})
  \bibinfo{pages}{892--922}.
%Type = Incollection
\bibitem[{Guichard(1996)}]{Guichard:Maximum}
\bibinfo{author}{D.R. Guichard}, \bibinfo{title}{The maximum number of cycles
  in graphs}, in: \bibinfo{booktitle}{Proceedings of the 27th Southeastern
  International Conference on Combinatorics, Graph Theory and Computing,
  {\emph{Congressus Numerantium}}}, volume \bibinfo{volume}{121},
  \bibinfo{year}{1996}, pp. \bibinfo{pages}{211--215}.
%Type = Article
\bibitem[{Gurevich et~al.(1984)Gurevich, Stockmeyer and
  Vishkin}]{Gurevich:Solving}
\bibinfo{author}{Y.~Gurevich}, \bibinfo{author}{L.~Stockmeyer},
  \bibinfo{author}{U.~Vishkin}, \bibinfo{title}{Solving {NP}-hard problems on
  graphs that are almost trees and an application to facility location
  problems}, \bibinfo{journal}{Journal of the Association for Computing
  Machinery} \bibinfo{volume}{31} (\bibinfo{year}{1984})
  \bibinfo{pages}{459--473}.
%Type = Incollection
\bibitem[{H{\"{a}}ggkvist(1982)}]{Haggkvist:Odd}
\bibinfo{author}{R.~H{\"{a}}ggkvist}, \bibinfo{title}{Odd cycles of specified
  length in non-bipartite graphs}, in: \bibinfo{editor}{B.~Bollob{\'{a}}s}
  (Ed.), \bibinfo{booktitle}{Graph Theory Proceedings of the Conference on
  Graph Theory}, volume~\bibinfo{volume}{62} of
  \textit{\bibinfo{series}{North-Holland Mathematics Studies}},
  \bibinfo{publisher}{North-Holland}, \bibinfo{year}{1982}, pp.
  \bibinfo{pages}{89--99}.
%Type = Article
\bibitem[{Jin(1993)}]{Jin:Minimal}
\bibinfo{author}{G.~Jin}, \bibinfo{title}{Triangle-free graphs with high
  minimal degrees}, \bibinfo{journal}{Combinatorics, Probability, and
  Computing} \bibinfo{volume}{2} (\bibinfo{year}{1993})
  \bibinfo{pages}{479--490}.
%Type = Article
\bibitem[{Jin(1995)}]{Jin:Chromatic}
\bibinfo{author}{G.~Jin}, \bibinfo{title}{Triangle-free four-chromatic graphs},
  \bibinfo{journal}{Discrete Mathematics} \bibinfo{volume}{145}
  (\bibinfo{year}{1995}) \bibinfo{pages}{151--170}.
%Type = Article
\bibitem[{Knor(1994)}]{Knor:Number}
\bibinfo{author}{M.~Knor}, \bibinfo{title}{On the number of cycles in
  $k$-connected graphs}, \bibinfo{journal}{Acta Mathematica Universitatis
  Comenianae} \bibinfo{volume}{63} (\bibinfo{year}{1994})
  \bibinfo{pages}{315--321}.
%Type = Incollection
\bibitem[{Markstr{\"o}m(2004)}]{Markstrom:Extremal}
\bibinfo{author}{K.~Markstr{\"o}m}, \bibinfo{title}{Extremal graphs for some
  problems on cycles in graphs}, in: \bibinfo{booktitle}{Proceedings of the
  25th Southeastern International Conference on Combinatorics, Graph Theory and
  Computing, {\emph{Congressus Numerantium}}}, volume \bibinfo{volume}{171},
  \bibinfo{year}{2004}, pp. \bibinfo{pages}{179--192}.
%Type = Article
\bibitem[{{\noopsort{mier}}A.~de Mier and Noy(2012)}]{DeMier:Maximum}
\bibinfo{author}{{\noopsort{mier}}A.~de Mier}, \bibinfo{author}{M.~Noy},
  \bibinfo{title}{On the maximum number of cycles in outerplanar and
  series-parallel graphs}, \bibinfo{journal}{Graphs and Combinatorics}
  \bibinfo{volume}{28} (\bibinfo{year}{2012}) \bibinfo{pages}{265--275}.
%Type = Article
\bibitem[{Pach(1981)}]{Pach:Graphs}
\bibinfo{author}{J.~Pach}, \bibinfo{title}{Graphs whose every independent set
  has a common neighbour}, \bibinfo{journal}{Discrete Mathematics}
  \bibinfo{volume}{37} (\bibinfo{year}{1981}) \bibinfo{pages}{217--228}.
%Type = Article
\bibitem[{Rautenbach and Stella(2005)}]{Rautenbach:Maximum}
\bibinfo{author}{D.~Rautenbach}, \bibinfo{author}{I.~Stella},
  \bibinfo{title}{On the maximum number of cycles in a {Hamiltonian} graph},
  \bibinfo{journal}{Discrete Mathematics} \bibinfo{volume}{304}
  (\bibinfo{year}{2005}) \bibinfo{pages}{101--107}.
%Type = Book
\bibitem[{Ryser(1963)}]{Ryser:Combinatorial}
\bibinfo{author}{H.J. Ryser}, \bibinfo{title}{Combinatorial Mathematics},
  \bibinfo{publisher}{Mathematical Association of America},
  \bibinfo{year}{1963}.
%Type = Article
\bibitem[{Schimpf and Shen(2012)}]{Schimpf:Eclipse}
\bibinfo{author}{J.~Schimpf}, \bibinfo{author}{K.~Shen},
  \bibinfo{title}{{{ECL}$^{i}${PS}$^{e}$} --- from {LP} to {CLP}},
  \bibinfo{journal}{Theory and Practice of Logic Programming}
  \bibinfo{volume}{12} (\bibinfo{year}{2012}) \bibinfo{pages}{127--156}.
%Type = Article
\bibitem[{Seinsche(1974)}]{Seinsche:Property}
\bibinfo{author}{D.~Seinsche}, \bibinfo{title}{On a property of the class of
  $n$-colorable graphs}, \bibinfo{journal}{Journal of Combinatorial Theory,
  Series B} \bibinfo{volume}{16} (\bibinfo{year}{1974})
  \bibinfo{pages}{191--193}.
%Type = Article
\bibitem[{Shi(1994)}]{Shi:Number}
\bibinfo{author}{Y.~Shi}, \bibinfo{title}{The number of cycles in a {Hamilton}
  graph}, \bibinfo{journal}{Discrete Mathematics} \bibinfo{volume}{133}
  (\bibinfo{year}{1994}) \bibinfo{pages}{249--257}.
%Type = Article
\bibitem[{Tak\'acs(1988)}]{Takacs:Limit}
\bibinfo{author}{L.~Tak\'acs}, \bibinfo{title}{On the limit distribution of the
  number of cycles in a random graph}, \bibinfo{journal}{Journal of Applied
  Probability} \bibinfo{volume}{25} (\bibinfo{year}{1988})
  \bibinfo{pages}{359--376}.
%Type = Article
\bibitem[{Valiant(1979)}]{Valiant:Complexity}
\bibinfo{author}{L.G. Valiant}, \bibinfo{title}{The complexity of computing the
  permanent}, \bibinfo{journal}{Theoretical Computer Science}
  \bibinfo{volume}{8} (\bibinfo{year}{1979}) \bibinfo{pages}{189--201}.
%Type = Article
\bibitem[{Volkmann(1996)}]{Volkmann:Estimations}
\bibinfo{author}{L.~Volkmann}, \bibinfo{title}{Estimations for the number of
  cycles in a graph}, \bibinfo{journal}{Periodica Mathematica Hungarica}
  \bibinfo{volume}{33} (\bibinfo{year}{1996}) \bibinfo{pages}{153--161}.

\end{thebibliography}

\end{document}